\newcommand{\N}{\mathbb{N}}
\newcommand{\R}{\mathbb{R}}
\renewcommand{\P}{\mathbb{P}}
\newcommand{\bP}{\bar{\mathbb{P}}}
\newcommand{\Z}{\mathbb{Z}}
\newcommand{\E}{\mathbb{E}}
\newcommand{\Q}{\mathbb{Q}}
\newcommand{\X}{\mathcal{X}}
\newcommand{\fe}{\mathfrak{e}}
\newcommand{\cE}{\mathcal{E}}
\newcommand{\cA}{\mathcal{A}}
\newcommand{\fR}{\mathfrak{R}}
\newcommand{\fK}{\mathfrak{K}}
\newcommand{\sX}{\mathcal{X}^\fe}
\newcommand{\sZ}{Z^\fe}
\renewcommand{\d}{{\rm d}}
\newcommand{\e}{{\rm e}}
\newcommand{\corrected}{\color{black}} 
\newcommand{\footremember}[2]{%
	\footnote{#2}
	\newcounter{#1}
	\setcounter{#1}{\value{footnote}}%
}
\newcounter{extralabel}[section]
\newcounter{assumption}
\newtheorem{ittheorem}{Theorem}
\newtheorem{itlemma}{Lemma}
\newtheorem{itproposition}{Proposition}
\newtheorem{itdefinition}{Definition}
\newtheorem{itcorollary}{Corollary}
\newtheorem{itconjecture}{Conjecture}
\theoremstyle{definition}
\newtheorem{itassumption}{Assumption}
\newtheorem{itremark}{Remark}
\newtheorem{itexample}{Example}
\newenvironment{theorem}{\addtocounter{extralabel}{1}
	\begin{ittheorem}}{\end{ittheorem}}
\newenvironment{corollary}{\addtocounter{extralabel}{1}
	\begin{itcorollary}}{\end{itcorollary}}
\newenvironment{lemma}{\addtocounter{extralabel}{1}
	\begin{itlemma}}{\end{itlemma}}
\newenvironment{proposition}{\addtocounter{extralabel}{1}
	\begin{itproposition}}{\end{itproposition}}
\newenvironment{definition}{\addtocounter{extralabel}{1}
	\begin{itdefinition}}{\end{itdefinition}}
\newenvironment{remark}{\addtocounter{extralabel}{1}
	\begin{itremark}}{\end{itremark}}
\newenvironment{assumption}{\addtocounter{assumption}{1}
	\begin{itassumption}}{\end{itassumption}}
\newenvironment{example}{\addtocounter{extralabel}{1}
	\begin{itexample}}{\end{itexample}}
\title{Spatial populations with seed-banks in random environment:\\
{III.} Convergence towards mono-type equilibrium}
\author{Shubhamoy Nandan\footremember{alley}{Mathematisch Instituut, Universiteit Leiden, Niels Bohrweg 1, 2333 CA  Leiden, NL.\newline \hspace*{0.56cm}E-mail: s.nandan@math.leidenuniv.nl}}
\date{}
\begin{document}
	\maketitle
\begin{abstract}
We consider the spatially inhomogeneous Moran model with seed-banks introduced in \cite{HN01}. Populations comprising \emph{active} and \emph{dormant} individuals are spatially structured in colonies labelled by $\mathbb{Z}^d,\,d\geq 1$. The population sizes are sampled from an \emph{ergodic}, \emph{translation-invariant}, \emph{uniformly elliptic} field that constitutes a \emph{random environment}. Individuals carry one of two types: $\heartsuit$ and $\spadesuit$. Dormant individual resides in what is called a \emph{seed-bank}. Active individuals \emph{exchange} type from the seed-bank of their own colony, and \emph{resample} type by choosing a parent uniformly at random from the distinct active populations according to a symmetric migration kernel. In \cite{HN01} by exploiting a \emph{dual} process given by an \emph{interacting coalescing particle system}, we showed that the spatial system exhibits a dichotomy between \emph{clustering} (mono-type equilibrium) and \emph{coexistence} (multi-type equilibrium). In this paper, we identify the domain of attraction for each mono-type equilibrium in the clustering regime for an \emph{arbitrary fixed} environment. Furthermore, we show that in dimensions $d\leq 2$, when the migration kernel is \emph{recurrent}, for almost all realization of the environment, the system with an \emph{initially consistent} type-distribution converges weakly to a mono-type equilibrium in which the probability of fixation to the all type-$\heartsuit$ configuration  does not depend on the environment. An explicit formula for the fixation probability is given in terms of an annealed average of the type-$\heartsuit$ densities in the active and the dormant population, biased by the ratio of the two population sizes at the target colony. 

Primary techniques employed in the proofs include stochastic duality and the environment process viewed from particle, introduced in \cite{Goldshied19} for random walk in random environment on a strip. A spectral analysis of Markov operator yields quenched weak convergence of the environment process associated with the \emph{single-particle dual process} to a reversible ergodic distribution, which we transfer to the spatial system of populations by using duality.

		\medskip\noindent
		\emph{Keywords:} 
		Moran model, resampling, migration, seed-bank, random environment, fixation probability, clustering, coexistence, duality, interacting particle system.
		
		\medskip\noindent
		\emph{MSC 2020:} 
		Primary 
		60K35; 
		Secondary 
		92D25. 
		
		\medskip\noindent 
		\emph{Acknowledgements:} 
		The research in this paper was supported by the Netherlands Organization for Scientific Research (NWO) through grant TOP1.17.019. The author thanks his advisor Frank den Hollander for suggesting the problem and appreciates his continuous encouragement during many inspiring discussions. The author thanks Evgeny Verbitskiy for several discussions on ergodic theory in the past, and for pointing out the reference \cite{Cohen14} which was very helpful for the present paper. The author also thanks Yuval Peres for providing (at online forum math.stackexchange.com) the reference \cite{Lin82} that relates weak convergence of Markov chains to the peripheral spectrum of Markov operator. The author thanks Rajat Hazra for insightful discussions.  Furthermore, the author thanks Frank Redig, Cristian Giardin\`a and Simone Floreani for discussions on duality.
		
\end{abstract}

\newpage
\tableofcontents
\newpage
\section{Introduction}
\paragraph{Background and literature.}
In recent years, understanding evolutionary behaviour of microbial populations that maintain a \emph{seed-bank}, or other \emph{dormant forms}, has gained considerable attention from both biologists and mathematicians \cite{BCEK15,BCKW16,GHO1,GHO3}. Dormancy refers to the ability of an organism to enter into a reversible state of reduced metabolic activity in response to adverse environmental conditions. While dormant, organisms refrain from reproduction, and other phenotypic development, until they become active again. While dormancy is a trait found mostly in microbial populations, the natural analogue of dormancy in plant populations is the suspension of seed germination in difficult ecological circumstances. Several experiments suggest that populations exhibiting dormancy have better heterogeneity, survival fitness and resilience \cite{Tellier11,Ivkovi12}. Dormancy appears to be ubiquitous to many forms of life and is considered to be an important evolutionary trait \cite{LJ11,LS}. Although the direct effect of this trait is not easily detected when viewed on the evolutionary time scale, researchers have made various attempts to better understand it from a mathematical perspective (see e.g. \cite{LHBB,BK} for a broad overview).

In a stochastic individual-based model, dormancy is mathematically incorporated by turning off reproduction or \emph{resampling} for a random and possibly extended period of time. This way of modelling dormancy introduces memory, and thereby gives rise to a rich behaviour of the underlying stochastic system. The first mathematical model dealing with the effect of dormancy goes back to \cite{Cohen66}. Since then several other ways to model seed-banks mathematically have emerged \cite{Kaj2001,BCEK15,BCKW16}. For example, in the model proposed in \cite{Kaj2001}, the classical Fisher--Wright model was extended to include a \emph{weak} seed-bank, where individuals reproduce offspring several generations ahead in time, with the skipped generations being interpreted as a dormant period for the offspring. However, in this model the resulting genealogy of the population, albeit stretched over time, retains the same coalescent structure described by the so-called Kingman coalescent process. Different qualitative behaviour was observed in \cite{Blath13,Blath19} by including a \emph{strong} seed-bank component, which enables the dormant individuals to have wake-up times with fat tails.
A trade-off in these models was the loss of the Markov property in the time-evolution of the system. This issue was partially tackled in \cite{BCKW16}, which introduced the \emph{seed-bank coalescent}, a new class of coalescent structures that, broadly speaking, describes the genealogy of a population exhibiting extreme dormancy.

All previously mentioned models study the effect of dormancy in a single-colony population and are mainly concerned with the underlying genealogy. Seed-bank models dealing with geographically structured populations are rare, and mathematically rigorous results are still under development. Only recently, in \cite{GHO1} (see also \cite{HP}), existing seed-bank models were extended to the \emph{spatial} setting by incorporating \emph{migration} of individuals between different colonies. These works overcome the challenge of modelling seed-banks with fat-tailed exit times by adding internal layers to the seed-banks, where \emph{active} individuals acquire a \emph{colour} before entering into a layer of the seed-bank that determines the wake-up time. Three different seed-bank models of increasing generality were introduced. A full description of the different regimes in the long-time behaviour of these models was obtained in \cite{GHO1} for the geographic space $\Z^d,\,d\geq 2$, whereas a multi-scale \emph{renormalization} analysis on the hierarchical group was carried out in \cite{GHO3}. Moreover, the finite-systems scheme was established \cite{Oomen22} as well (i.e., how a truncated
version of the system behaves on a properly tuned time scale as the truncation level tends to
infinity).

Whilst the works cited so far have dealt with seed-bank models only in the \emph{diffusive regime} which is obtained after taking the \emph{large-colony-size-limit} of individual-based models, it is natural and biologically more reasonable to consider seed-bank models with populations that have \emph{finite} sizes. However, a key challenge in dealing with stochastic models of finite populations evolving under evolutionary forces such as dormancy, resampling, migration etc., is the absence of mathematical tools for carrying out sophisticated computations. Recently, stochastic duality \cite{CGGR,JK2014} has proven to be a formidable tool for performing exact computations in many stochastic interacting systems. In particular, \cite{HN01} analyses using duality a stochastic individual-based model that incorporates dormancy in a spatial system of \emph{finite} populations. To the best of our knowledge, the combined effect of evolutionary forces such as dormancy, resampling and migration in the finite population setting has not been studied in the literature before.
\paragraph{Motivations and targets.}
The model introduced in \cite{HN01} consists of geographically structured population with preassigned finite sizes and is described via an \emph{interacting particle system} evolving in an \emph{inhomogeneous} state space. Individuals live in colonies labelled by $\Z^d,\, d\geq 1$, carry one of two genetics types: $\heartsuit,\ \spadesuit$, and can be either active or dormant. While active, individuals resample not only from their own colony, but also from other colonies according to a \emph{symmetric random walk transition kernel}. The latter is referred to as migration. Active individuals exchange types with dormant individuals of their own colony. The sizes of the active and the dormant populations depend on the colony and remain constant throughout the time evolution of the system. The underlying genealogy of the spatial populations is described by an \emph{interacting structured seed-bank coalescent}, referred to as the \emph{dual}, where lineages switch between an active and a dormant state, and perform interacting coalescing random walks on the geographic space $\Z^d$. By exploiting the dual, it was shown that the spatial system exhibits a dichotomy between \emph{clustering} (= existence of mono-type equilibrium) and \emph{coexistence} (= existence of multi-type equilibrium). Further in \cite{HN01} convergence of the spatial process to an equilibrium was established only for a restricted class of initial distributions, and in \cite{HN02} refined conditions on the model parameters were derived for which the system exhibits clustering, i.e., any attained equilibrium is mono-type. In particular, it was proved that if the relative strengths of the seed-banks, i.e., the ratios of the dormant and the active populations are bounded uniformly over the geographic space, then clustering is equivalent to the symmetric random walk kernel being recurrent. Some further lines of prospective research are:
\begin{itemize}
	\item[(1)] Identify the \emph{domain of attraction} of each equilibrium in the clustering and the coexistence regime.
	\item[(2)] Identify the parameter regime for coexistence when the relative strengths of the seed-banks are unbounded or have \emph{infinite mean}.
	\item[(3)] Establish finite-system schemes in the coexistence regime and quantify the rate of cluster growth in the clustering regime.
\end{itemize}
In the present paper we study the spatial model with seed-banks by treating the preassigned constant population sizes as an \emph{environment} of the system.
One of our main contributions is that we provide a full characterization of the domain of attraction for each mono-type equilibrium in the clustering regime for an \emph{arbitrary fixed} environment (satisfying mild regularity conditions).

In the spatial model described above the constituent populations maintain constant sizes over time. While this can be biologically explained by assuming that the system receives sufficient supply of environmental resources, a more natural extension would be to consider the model where population sizes come from a \emph{random field} determined by environmental factors such as extreme temperatures, inadequate supply of food resources, etc. Research in this direction has started only recently (see e.g. \cite{Maite19,Blath21,Wisnoski22}), although most results are available only for models that are scaled diffusively or are simulation based.

The novelty of the present paper is that we study the mono-type equilibrium behaviour of the spatial system with seed-banks introduced in \cite{HN01} for the setting where the population sizes constitute a \emph{random static} environment. In particular, the sizes are drawn from an \emph{ergodic} and \emph{translation-invariant} random field. Our contributions are two-fold:
\begin{itemize}
	\item[(a)] When the symmetric migration kernel is \emph{recurrent} (which requires $d\leq 2$) and the random environment is \emph{uniformly elliptic}, we show that the system started from an initially \emph{consistent} type distribution converges in law to a mono-type equilibrium for almost all realization of the environment. In other words, we prove that the system undergoes \emph{homogenization} in the \emph{quenched} setting.
	\item[(b)] We show that, in the homogenized mono-type equilibrium, the \emph{fixation probability} (in law) to the all type-$\heartsuit$ configuration is deterministic, i.e., does not depend on the realization of the environment. We also provide an explicit formula for this probability.
\end{itemize}
The techniques used in the proof of the main theorems include stochastic duality, moment relations, semigroup expansion and the environment viewed from the particle recently introduced in \cite{Goldshied19} for random walk in random environment on a strip, and spectral analysis of Markov kernel operator.
\paragraph{Outline.} The paper is organized as follows. In Section~\ref{sec-main-theorems} we give a definition of the spatial model, state our main theorems on the convergence of the system to a mono-type equilibrium, and explain the strategy of the proofs in detail. Section~\ref{sec-single-particle-proc-in-random-env} is devoted to the analysis of dual process with a single lineage (or single particle) in random environment, where homogenization results are derived for the associated \emph{environment process}. In Section~\ref{sec-proof-of-main-theorem} we prove our main theorems using the results derived in Section~\ref{sec-single-particle-proc-in-random-env}. In Appendix~\ref{sec-stationarity-n-large-numbers}, we prove a result stated in Section~\ref{sec-single-particle-proc-in-random-env} on the existence of a stationary distribution for the aforementioned environment process, and also give a proof of the \emph{strong law of large numbers} for the single-particle dual, which is a result of independent interest. Finally, in Appendix~\ref{sec-fundamental-theorem-of-MC} we prove an auxiliary proposition relating weak convergence of Markov chain to the peripheral point-spectrum of a Markov operator, which is needed for the proof of our main theorems.
\section{Main theorems}
\label{sec-main-theorems}
In Section~\ref{sec-defn-of-system} we recall the spatially inhomogeneous system of populations with seed-banks from the companion paper \cite{HN01} and set the stage to state our main results. For a formal mathematical description of the spatial system, we refer the reader to \cite[Section 3.2]{HN01}. In Section~\ref{sec-clustering-in-fixed-env} we give our first main result on the convergence of the system in the clustering regime for an \emph{arbitrary fixed environment} (Theorem~\ref{thm:domain-of-attr-cluster}). In Section~\ref{sec-clustering-in-random-env} we consider the system in a \emph{static random} environment that is drawn from an ergodic and translation-invariant field defined on a subset of \emph{uniformly elliptic} environments, and present a \emph{homogenization} statement in the \emph{quenched} setting on the convergence of the system to a mono-type equilibrium (Theorem~\ref{thm:random-env-clustering}--\ref{thm:weak-convergence-of-contin-env}). In Section~\ref{sec-discussion} we discuss the results and shed light on the strategy of the proofs.
\subsection[Quick definition of the model]{Quick recount of the model and basic notations}
\label{sec-defn-of-system}
We consider the integer lattice $\Z^d, d\geq 1,$ as a \emph{geographic space}, where each $i\in\Z^d$ represents a colony consisting of two kinds of population: \emph{active} and \emph{dormant}. For $i\in\Z^d,$ we write $(N_i,M_i)\in\N^2$ to denote the \emph{size} of the active, respectively, the dormant population at colony $i$. The sizes of the populations are preassigned and can vary across different colonies. Further, each individual inside a population carries one of two \emph{genetic types}: $\heartsuit$ and $\spadesuit$. Individuals in the active (resp.\ dormant) populations of the spatial system (see Fig.~\ref{fig:representation-of-spatial-system}) are called active  (resp.\ dormant). Individuals update their genetic type over time:
\begin{itemize}
	\item[(1)]
	Active individuals in any colony \emph{resample}  active individuals in any colony. 
	\item[(2)]
	Active individuals in any colony \emph{exchange} with dormant individuals in the same colony.
\end{itemize} 
For (1) we assume that each active individual at colony $i$ at rate $a(i,j)$ uniformly draws an active individual at colony $j$ and \emph{adopts its type}. For (2) we assume that each active individual at colony $i$ at rate $\lambda \in (0,\infty)$ uniformly draws a dormant individual at colony $i$ and the two individuals \emph{trade places while keeping their type} (i.e., the active individual becomes dormant and the dormant individual becomes active). Dormant individuals  do \emph{not} resample and thereby cause an overall slow-down of the random genetic drift that arises from (1). Because of this, we refer to the dormant populations as the \emph{seed-banks} of the spatial system. Although the exchange rate $\lambda$ could be made to vary across colonies, for the sake of simplicity we choose it to be constant.

\medskip\noindent
We put
\begin{equation}
	\label{eqn:defn-of-Ki}
	K_i:=\frac{N_i}{M_i}, \qquad i\in\Z^d, 
\end{equation}
for the \emph{ratios} of the sizes of the active and the dormant population in each colony. Observe that $K_i^{-1}$ quantifies the \emph{relative strength} of the seed-bank at colony $i\in\Z^d$. 

\medskip\noindent
We impose the following conditions on the \emph{migration kernel} $a(\cdot\,,\cdot)$:
\begin{assumption}{\bf [Homogeneous migration]}
	\label{assumpt1}
	The migration kernel $a(\cdot\,,\cdot)$ satisfies:
	\begin{itemize}
		\item[\rm{(1)}]
		$a(\cdot\,,\cdot)$ is irreducible in $\Z^d$.
		\item[\rm{(2)}]
		$a(i,j) = a(0,j-i)$ for all $i,j\in{\Z^d}$.
		\item[\rm{(3)}]
		$c:=\displaystyle\sum_{i\in\Z^d\backslash\{0\}} a(0,i) < \infty$ and $a(0,0) > 0$.
	\end{itemize}
\end{assumption}

\noindent
Part (2) ensures that the way genetic information moves between colonies is homogeneous in space. Part (3) ensures that the total rate of resampling of a single individual is finite and that resampling is possible also at the same colony. 
\begin{figure}[h!t!]
	\vspace{0.5cm}
	\begin{center}
		\includegraphics[width=12cm,height=4.5cm]{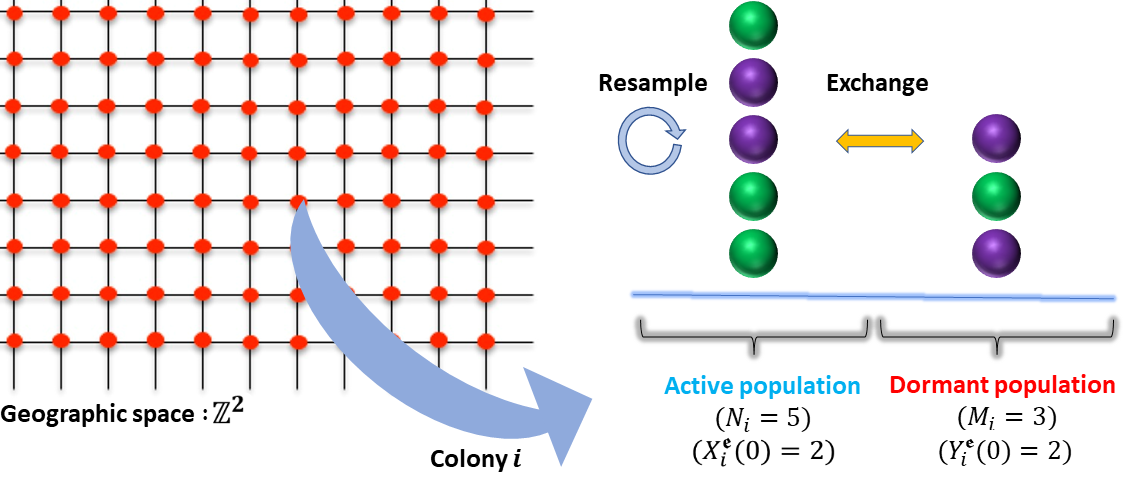}
	\end{center}
	\caption{\small A schematic representation of the spatial populations on geographic space $\Z^2$ in an environment $\fe:=(N_k,M_k)_{k\in\Z^2}$. Purple individuals are of type $\heartsuit$ and green individuals are of type $\spadesuit$. The active (resp.\ dormant) population at colony $i$ has size $N_i=5$ (resp.\ $M_i=3$). The system evolves in time under the influence of resampling and exchange. }
	\label{fig:representation-of-spatial-system}
\end{figure}

\medskip\noindent
Under the resampling and exchange dynamics described earlier, the initial population sizes $(N_i,M_i)_{i\in\Z^d}$ remain constant over time. Thus, we can naturally think of the sizes of the populations as a \emph{static environment} for the spatial system. Throughout the sequel we denote by $\fe:=(N_i,M_i)_{i\in\Z^d}\in(\N\times\N)^{\Z^d}$ a typical choice for the sizes of the constituent populations and refer to it as the \emph{environment}.
For $n\in\N$, we write $[n] := \{0,1,\ldots,n\}$, and at each colony $i$ we register the pair $(X_i^\fe(t),Y_i^\fe(t))\in[N_i]\times[M_i]$, representing the number of active, respectively, dormant individuals of type $\heartsuit$ at time $t$ at colony $i$.

The resulting Markov process is an interacting particle system denoted by
\begin{equation}
	\label{process}
	\sZ:=(Z^\fe(t))_{t \geq 0}, \qquad \sZ(t) := ( X_i^\fe(t),Y_i^\fe(t))_{i \in \Z^d},
\end{equation}
and lives on the state space
\begin{equation}
	\label{eqn:defn-of-Z-state-space}
\sX := \prod_{i\in\Z^d} [N_i]\times[M_i].
\end{equation}
It is implicitly assumed that the state space $\sX$ is equipped with the natural product topology, under which it becomes compact. The space of càdlàg functions on $\sX$ is endowed with the Skorokhod topology and plays the role of the ambient probability space for the process $\sZ$. The superscript $\fe$ indicates the dependence of the process $\sZ$ on the environment $\fe=(N_i,M_i)_{i\in\Z^d}$.

Throughout the sequel we adopt the convention of adding a superscript (or subscript) with Fraktur font to emphasize the dependence of a variable on the realization of the environment. Furthermore, in order to avoid unnecessary technicalities, we throughout consider environments that are admissible in the following sense:

\begin{definition}{\bf [Admissible environments]}
	\label{defn:admissible-colony-sizes}
	 \rm{Consider the following three conditions for the environment $\fe = (N_i,M_i)_{i\in\Z^d}\in (\N\times\N)^{\Z^d}$ and the migration kernel $a(\cdot\,,\cdot)$:
	\begin{itemize}
		\item[\rm{(a)}]\label{assump:non-trivial colony-sizes}
		$N_i\geq 2$ and $M_i\geq 2$ for all $i\in\Z^d$.
		\item[\rm{(b)}]
		$\sup_{i\in\Z^d\backslash\{0\}} \|i\|^{-\gamma} N_i < \infty$ and $\sum_{i\in\Z^d} \|i\|^{d+\gamma+\delta} a(0,i)<\infty$ for {some} $\gamma>0$ and some $\delta>0$.
		\item[\rm{(c)}]
		$\lim_{\|i\| \to \infty} \|i\|^{-1} \log N_i = 0$ and $\sum_{i\in\Z^d} \e^{\delta \|i\|} a(0,i)<\infty$ for some $\delta>0$.
	\end{itemize}
	If (a) is satisfied, i.e., in each colony, both the active and the dormant population consist of at least two individuals, then we say that $\fe$ is \emph{non-trivial}. Further, if either (b) or (c) is satisfied, then we say that $\fe$ is \emph{compatible}. Non-trivial and compatible environments are referred to as \emph{admissible} environments. The set of all admissible environments is denoted by $\cA$.
$\hfill\blacklozenge$
}
\end{definition}
In \cite[Theorem 2.2]{HN01} it was shown {\corrected by formulating a well-posed martingale problem} that under Assumption~\ref{assumpt1}, for any compatible environment $\fe$, the Markov process $\sZ$ in \eqref{process} is well-defined.
\begin{remark}{\bf[Higher moments]}
\label{rem:higher-moments-of-migration-kernel}
{\rm Unfortunately, because of conditions (b) and (c) in Definition~\ref{defn:admissible-colony-sizes}, the migration kernel $a(\cdot\,,\cdot)$  is required to have at least $d+\delta$ finite moment for some $\delta>0$. We believe that this can be relaxed to a weaker moment condition. {\corrected On the one hand, these moment conditions are naturally required to ensure the well-posedness of the martingale problem associated with the process $Z^\fe$. On the other hand, if there is a \emph{uniform upper bound} available for the active population sizes $(N_i)_{i\in\Z^d}$, then Assumption~\ref{assumpt1} alone is sufficient to carry out the construction of $Z^\fe$ by following Liggett's method based upon the Hille-Yosida theory of semigroups. As a matter of fact, in the latter case, the conditions stated in \cite[Chapter 1, Theorem~3.9]{L} are met, and therefore, it is possible to avoid the method of well-posed martingale problem adopted in \cite[Section 3.2.3]{HN01} altogether, and to drop conditions (b)--(c) from Definition~\ref{defn:admissible-colony-sizes}. However, if the active population sizes are unbounded, then Liggett's method does not work straight away, but the method of well-posed martingale problem succeeds under either of the above two conditions.} We do not require a growth restriction on the sizes of the dormant populations, because in our model only active individuals initiate resampling and exchange of the types, while dormant individuals sit idle. Condition (a) arises from a technical requirement in \cite{HN02} and may be removed with minor adaptations.}
\end{remark}
\subsection{Clustering in a fixed environment}
\label{sec-clustering-in-fixed-env}
A natural question that arises in the discussion of any model is whether an equilibrium exists. To answer this, let us denote by $\mathcal{P(\sX)}$ the set of all probability distributions on $\sX$, and let $\delta_\fe\in\mathcal{P}(\sX)$ (resp., $\delta_\mathbf{0}$) be the Dirac distribution concentrated at the configuration $\fe\in\sX$ (resp., $(0,0)_{i\in\Z^d}$). Observe that $\fe\in\sX
$ (resp., $(0,0)_{i\in\\Z^d}$) is the configuration where all individuals are of type-$\heartsuit$ (resp., type-$\spadesuit$), and therefore $\delta_\fe,\delta_\mathbf{0}$ are two trivial extremal equilibria for the process $\sZ$. Indeed, when all individuals in the spatial system have the same genetic type, neither resampling nor exchange can reintroduce the missing genetic type, and thereby push the system to an out-of-equilibrium state. This immediately raises the question of existence of any other equilibrium apart from these two trivial ones, and is the reason for introducing the following definition:

\begin{definition}{\bf [Clustering and Coexistence]}
	\label{defn:clustering-coexistence}
{\rm  We say that the process $\sZ$ is in the \emph{clustering} regime if $\delta_\mathbf{0}$ and $\delta_\fe$ are the only two extremal equilibrium. Otherwise, we say that the process is in the \emph{coexistence} regime.
}\hfill$\blacklozenge$
\end{definition}
\begin{remark}
	{In the clustering regime any equilibrium $\nu\in\mathcal{P}(\sX)$ of the process $\sZ$ is a mixture of $\delta_\mathbf{0}$ and $\delta_\fe$. Thus, if the process $\sZ$ exhibits clustering and is in equilibrium, all individuals in the spatial system are of type $\heartsuit$ or of type $\spadesuit$.}
\end{remark}
\noindent
In \cite[Theorem~3.17]{HN01}, a necessary and sufficient criterion for clustering was formulated in terms of a \emph{dual} $(Z^\fe_*(t))_{t \geq 0}$ of the process $\sZ$. The dual process
\begin{equation}
	\label{defn:dual-process}
	Z^\fe_*:=(Z^\fe_*(t))_{t\geq 0},\quad Z^\fe_*(t):= (n_i^\fe(t),m_i^\fe(t))_{i\in\Z^d},
\end{equation}
is also an interacting particle system, {\corrected which lives on the state space
\begin{equation}
	\label{eqn:defn-of-dual-state-space}
	\X^{\fe}_* := \Big\{(n_i,m_i)_{i\in\Z^d} \in \sX\colon\,\sum_{i\in\Z^d}(n_i+m_i)<\infty\Big\}
\end{equation}
consisting of configurations in $\sX$ with finite masses. It describes a Markovian evolution of a finite collections of indistinguishable particles that switch between an \emph{active} and a \emph{dormant} state.} The variable $n_i^\fe(t)$ (resp.\ $m_i^\fe(t)$) in \eqref{defn:dual-process} counts the number of active (resp.\ dormant) dual particles present at location $i\in\Z^d$ at time $t\geq 0$. The dual particles perform \emph{interacting coalescing} random walks on $\Z^d$ as long as they are in the active state, with rates (see \cite[Definition 3.7]{HN01}) that are determined by the environment $\fe$, the migration kernel $a(\cdot\,,\cdot)$ and the exchange rate $\lambda$.

{\corrected In \cite{HN02} the clustering criterion given in \cite[Theorem~3.17]{HN01} was further refined by exploiting a two-particle version of the dual}, and conditions on the environment $\fe$ and other parameters were obtained for which the process $\sZ$ exhibits clustering. In particular, it was shown (see \cite[Corollary~2.14]{HN02}{\corrected \footnote{\corrected Corollary~2.14 follows from \cite[Theorem~2.13]{HN02} that contained a minor gap in its proof. The issue has been resolved by Frank den Hollander and the present author with the help of a zero-one law in \cite[Lemma~A.2.2]{Nandan2023}.}}) that clustering prevails under the following set of conditions:
\begin{assumption}{\bf [Clustering environment]}
	\label{assumpt:clustering-env}
	The migration kernel $a(\cdot\,,\cdot)$ satisfying Assumption~\ref{assumpt1} and the environment $\fe=(N_i,M_i)_{i\in\Z^d}$ {\corrected (admissible in the sense of Definition~\ref{defn:admissible-colony-sizes})} are such that
	\begin{itemize}
		\item[\rm{(1)}] $a(\cdot\,,\cdot)$ is symmetric, i.e.,
		\begin{equation}
			a(0,i) = a(0,-i),\quad i\in\Z^d.
		\end{equation}
		\item[\rm{(2)}] $a(\cdot\,,\cdot)$ generates a recurrent random walk on $\Z^d$ that satisfies a local central limit theorem (LCLT). This requirement implicitly forces $d\leq 2$ and requires the migration kernel $a(\cdot\,,\cdot)$ to have a finite second moment.
		\item[\rm{(3)}] The relative strength of the seed-banks determined by $\fe$ are spatially uniformly bounded, i.e.,
		\begin{equation}
			\sup_{i\in\Z^d} \tfrac{M_i}{N_i}<\infty.
		\end{equation}
		\item[\rm{(4)}] The sizes of the active populations determined by $\fe$ are \emph{non-clumping}, i.e.,
		\begin{equation}
			\inf_{i\in\Z^d}\sum_{\|j-i\|\leq R}\tfrac{1}{N_j} > 0\quad\text{ for some } R<\infty.
		\end{equation}
	\end{itemize}
\end{assumption}
\noindent
In view of the above, unless stated otherwise, we will throughout assume that Assumption~\ref{assumpt1} and Assumption~\ref{assumpt:clustering-env} are in force. We remark that the above conditions are sufficient but not necessary for the process $\sZ$ to remain in the clustering regime.

In this exposition we refrain from introducing the dual process in full generality and only define a version of the dual consisting of a single particle in terms of a \emph{coordinate process} $\Theta^\fe$. Informally, the process $\Theta^\fe$ keeps track of the location and the state of a single dual particle in time, while the general dual $\sZ_*$ describes the evolution of the particle via configurations in $\sX_*$. The process $\Theta^\fe$ plays a key role in the proofs of all our main results, and will be our sole focus in Section~\ref{sec-single-particle-proc-in-random-env}. Later, in Section~\ref{sec-prelims-on-dual} we will explain via Lemma~\ref{lemma:relation-between-theta-dual} how the single-particle process $\Theta^\fe$ is related to the general dual process $\sZ_*$. We refer the reader to \cite[Section 3.2]{HN01} and \cite[Section 3]{HN02} for further insight into the general dual process $Z_*^\fe$.

\begin{definition}{\bf [Single-particle dual process]}
	\label{defn:single-particle-dual}
	{\rm The single-particle dual process 
		\begin{equation}
			\label{eqn:coordinate-of-single-dual}
			\Theta^\fe := (\Theta^\fe(t))_{t \geq 0}, \qquad \Theta^\fe(t) = (x^\fe_t,\alpha^\fe_t),
		\end{equation}
	in environment $\fe:=(N_i,M_i)_{i\in\Z^d}$ is the continuous-time Markov chain on the state space 
		\begin{equation}
			G:=\Z^d\times\{0,1\}
		\end{equation} 
		with transition rates
		\begin{equation}
			\label{eqn:trans-rates-of-theta}
				(i,\alpha) \to
				\begin{cases}
					\displaystyle				
					(i,1-\alpha), &\text{ at rate } 
					\lambda[\alpha+(1-\alpha)K_i],\\
					(j,\alpha), &\text{ at rate } 
					\alpha\, a(0,j-i) \quad \,\text{ for } j\neq i\in\Z^d,
				\end{cases}
		\end{equation}
		where $(i,\alpha)\in G$ and the environment $\fe$ fixes $K_i$ by \eqref{eqn:defn-of-Ki}. We define the time-$t$ probability transition kernel $p_t^\fe(\cdot\,,\cdot)\,:G\times G\to[0,1]$ associated to $\Theta^\fe$ as
		\begin{equation}
			p_t^\fe(\eta,\xi) := P_\eta^\fe(\Theta^\fe(t)=\xi),\quad\eta,\xi\in G,
		\end{equation}
	where $P^\fe_\eta$ is the law of the process $\Theta^\fe$ started at $\eta\in G$. 
	}\hfill $\blacklozenge$
\end{definition}

\noindent
The coordinates $x_t^\fe$ and $\alpha_t^\fe$ in \eqref{eqn:coordinate-of-single-dual} represent, respectively, the location in $\Z^d$ and the state (active or dormant) of the particle at time $t$, where $0$ stands for dormant and $1$ stands for active. Note from \eqref{eqn:trans-rates-of-theta} that only the wake-up rate of the particle depends on the environment $\fe$, and only via the ratios $(K_i)_{i\in\Z^d}$ defined in \eqref{eqn:defn-of-Ki}. Indeed, the average time spent in the dormant state by the particle at site $i$ is proportional to $K_i^{-1}$, the relative strength of the seed-bank at colony $i$. The particle in the active state migrates according to the kernel $a(\cdot\,,\cdot)$, and so migration is not affected by the environment $\fe$, at least not in a direct manner. This makes the analysis of the single-particle process $\Theta^\fe$ in a typical \emph{random} environment $\fe$ easier than the full dual process $Z_*^\fe$. 

\medskip\noindent
Let us now state the main result of this section.

\begin{theorem}{\bf[Domain of attraction]}
	\label{thm:domain-of-attr-cluster}
	Suppose that the process $\sZ:=(\sZ(t))_{t\geq 0}$ {\corrected exhibits clustering in the sense of Definition~{\rm\ref{defn:clustering-coexistence}}} and $\sZ(0)=(X_i^\fe(0),Y_i^\fe(0))_{i\in\Z^d}$ has distribution $\mu^\fe\in\mathcal{P}(\sX)$, where $\fe:=(N_i,M_i)_{i\in\Z^d}\in\cA$ is an arbitrarily fixed environment. If $\mu_t^\fe$ denotes the time-$t$ distribution of the process $\sZ$, then the following are equivalent:
	\begin{enumerate}[{\rm(a)}]
		\item $\mu_t^\fe$ converges weakly as $t\to\infty$.
		\item For any $(i,\alpha)\in G:=\Z^d\times\{0,1\}$, 
		\begin{equation}
			\label{eqn:limiting-behaviour-of-kernel}
			f^\fe(i,\alpha):=\lim\limits_{t\to\infty} \sum_{(j,\beta)\in G}p_t^\fe((i,\alpha),(j,\beta))\, \E_{\mu^\fe}\Big[\beta\,\tfrac{X^\fe_j(0)}{N_j}+(1-\beta)\,\tfrac{Y^\fe_j(0)}{M_j}\Big] \text{ exists},
		\end{equation}
		where $p_t^\fe(\cdot\,,\cdot)$ is as in Definition~{\rm \ref{defn:single-particle-dual}}.
	\end{enumerate}
	Further, if any of the above two conditions is satisfied, then there exists $\theta_\fe\in[0,1]$ such that $f^\fe(\cdot)\equiv\theta_\fe$ and
	\begin{equation}
		\lim\limits_{t\to\infty}\mu_t^\fe = (1-\theta_\fe)\delta_{\mathbf{0}}+\theta_\fe\delta_{\fe}.
	\end{equation} 
\end{theorem}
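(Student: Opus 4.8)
The plan is to route everything through the stochastic duality between $\sZ$ and the interacting coalescing dual $Z^\fe_*$ from \cite{HN01}. Recall that the relevant duality function is the ``sampling without replacement'' function $D(z,\xi):=\prod_{i\in\Z^d}\binom{x_i}{n_i}\binom{N_i}{n_i}^{-1}\binom{y_i}{m_i}\binom{M_i}{m_i}^{-1}$ for $z=(x_i,y_i)_{i\in\Z^d}\in\sX$ and $\xi=(n_i,m_i)_{i\in\Z^d}\in\X^\fe_*$, satisfying $\E_z[D(\sZ(t),\xi)]=\E_\xi[D(z,Z^\fe_*(t))]$; for $\xi$ a single active particle at $i$ it reduces to $x_i/N_i$ and for a single dormant particle at $i$ to $y_i/M_i$. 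Writing $\psi_{i,\alpha}(z):=\alpha\,x_i/N_i+(1-\alpha)\,y_i/M_i$, writing $g(j,\beta)$ for the bracketed expectation in \eqref{eqn:limiting-behaviour-of-kernel}, identifying $(i,\alpha)\in G$ with the one-particle dual configuration it determines, and using Lemma~\ref{lemma:relation-between-theta-dual} together with the rates \eqref{eqn:trans-rates-of-theta}, I would first record that the pre-limit expression in \eqref{eqn:limiting-behaviour-of-kernel} equals $\sum_{(j,\beta)\in G}p_t^\fe((i,\alpha),(j,\beta))g(j,\beta)=:P_t^\fe g(i,\alpha)=\E_{\mu^\fe}[D(\sZ(t),(i,\alpha))]=\E_{\mu_t^\fe}[\psi_{i,\alpha}]$; thus condition (b) is exactly the pointwise convergence of $P_t^\fe g$ on $G$ as $t\to\infty$, with limit $f^\fe$. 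The implication (a)$\Rightarrow$(b) is then immediate, since each $\psi_{i,\alpha}$ is a bounded continuous cylinder function on the compact space $\sX$, so weak convergence of $\mu_t^\fe$ forces $\E_{\mu_t^\fe}[\psi_{i,\alpha}]=P_t^\fe g(i,\alpha)$ to converge.

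For (b)$\Rightarrow$(a) I would first reduce weak convergence of $\mu_t^\fe$ to convergence of $\E_{\mu_t^\fe}[D(\cdot,\xi)]$ for every $\xi\in\X^\fe_*$. For any finite $F\subset\Z^d$ the maps $z\mapsto\prod_{i\in F}\binom{x_i}{n_i}\binom{y_i}{m_i}$ with $0\le n_i\le N_i$ and $0\le m_i\le M_i$ form a basis of the finite-dimensional space of real functions on $\prod_{i\in F}[N_i]\times[M_i]$ (an invertible triangular change of basis from the monomials, which span by the Vandermonde determinant), so the linear span of $\{D(\cdot,\xi):\xi\in\X^\fe_*\}$ contains all cylinder functions and is uniformly dense in $C(\sX)$ (Stone--Weierstrass). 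Since $|\E_{\mu_t^\fe}[h]|\le\|h\|_\infty$ for all $h$, convergence of $\E_{\mu_t^\fe}[\cdot]$ on this dense set upgrades by a three-epsilon argument to all of $C(\sX)$, and a Riesz-representation argument identifies the limiting functional with integration against a probability measure $\mu_\infty^\fe$; this is weak convergence of $\mu_t^\fe$.

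It then remains to show that (b) makes $\lim_{t\to\infty}\E_{\mu_t^\fe}[D(\cdot,\xi)]$ exist for each $\xi$. By duality $\E_{\mu_t^\fe}[D(\cdot,\xi)]=\E_{\mu^\fe}[D(\sZ(t),\xi)]=\E_\xi[\widehat D(Z^\fe_*(t))]$, where $\widehat D(\eta):=\E_{\mu^\fe}[D(\sZ(0),\eta)]$ satisfies $\|\widehat D\|_\infty\le 1$, agrees with $g$ on single-particle configurations, and the dual expectation is over the dual path alone. Here the \emph{clustering hypothesis} enters: in the clustering regime the dual started from the finite configuration $\xi$ reaches a single-particle configuration at an a.s.\ finite time $\tau$ --- it suffices that any pair of lineages coalesces a.s., which is the content of \cite[Theorem~3.17]{HN01}, after which one iterates using that the particle number is non-increasing --- and thereafter $Z^\fe_*$ evolves as the single-particle chain $\Theta^\fe$ with rates \eqref{eqn:trans-rates-of-theta}. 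Conditioning on $\mathcal{F}_\tau$ and applying the strong Markov property, I would write $\E_\xi[\widehat D(Z^\fe_*(t))]=\E_\xi[\widehat D(Z^\fe_*(t))\mathbf{1}_{\{\tau>t\}}]+\E_\xi[\mathbf{1}_{\{\tau\le t\}}(P_{t-\tau}^\fe g)(\zeta_\tau)]$, where $\zeta_\tau\in G$ records the location and state at coalescence. The first term is at most $\P_\xi(\tau>t)\to 0$; in the second, on $\{\tau<\infty\}$ one has $t-\tau\to\infty$ and, by (b), $(P_{t-\tau}^\fe g)(\zeta_\tau)\to f^\fe(\zeta_\tau)$, the integrand being bounded by $1$, so dominated convergence gives $\lim_{t\to\infty}\E_{\mu_t^\fe}[D(\cdot,\xi)]=\E_\xi[f^\fe(\zeta_\tau)]$, which exists. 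This proves (a), and hence the equivalence.

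For the final identification, once the full limit $\mu_\infty^\fe=\lim_{t\to\infty}\mu_t^\fe$ is known to exist, the Feller property of $\sZ$ gives $\mu_\infty^\fe S_s^\fe=\lim_{t\to\infty}\mu_t^\fe S_s^\fe=\lim_{t\to\infty}\mu_{t+s}^\fe=\mu_\infty^\fe$ for all $s\ge 0$, where $(S_t^\fe)_{t\ge 0}$ is the Markov semigroup of $\sZ$, so $\mu_\infty^\fe$ is an equilibrium; by Definition~\ref{defn:clustering-coexistence} and the remark following it, in the clustering regime $\mu_\infty^\fe=(1-\theta_\fe)\delta_{\mathbf{0}}+\theta_\fe\delta_\fe$ for some $\theta_\fe\in[0,1]$. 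Since $\psi_{i,\alpha}$ takes the value $0$ at the configuration $\mathbf{0}$ and $1$ at the configuration $\fe$, one gets $\E_{\mu_\infty^\fe}[\psi_{i,\alpha}]=\theta_\fe$ for every $(i,\alpha)\in G$; on the other hand $\E_{\mu_\infty^\fe}[\psi_{i,\alpha}]=\lim_{t\to\infty}\E_{\mu_t^\fe}[\psi_{i,\alpha}]=\lim_{t\to\infty}P_t^\fe g(i,\alpha)=f^\fe(i,\alpha)$, whence $f^\fe\equiv\theta_\fe$ and $\mu_t^\fe\to(1-\theta_\fe)\delta_{\mathbf{0}}+\theta_\fe\delta_\fe$. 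The step I expect to be the main obstacle is the clustering input in the third paragraph: extracting cleanly from \cite[Theorem~3.17]{HN01} that the general dual from an arbitrary finite configuration comes down to a single particle in a.s.\ finite time, and verifying that its post-coalescence evolution is precisely the chain $\Theta^\fe$ of Definition~\ref{defn:single-particle-dual}. The spanning/density reduction is routine linear algebra, and the remainder is bookkeeping with the duality relation.
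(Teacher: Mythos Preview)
Your argument is correct and follows the same overall architecture as the paper: duality reduces everything to convergence of $\E_{\mu_t^\fe}[D(\cdot,\xi)]$, the clustering hypothesis forces the dual to reach a single particle in finite time, and the strong Markov property at that time plus the one-particle limit in (b) closes the loop. Two points of comparison are worth making.

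First, you and the paper derive the constancy $f^\fe\equiv\theta_\fe$ by different routes. The paper shows directly that $f^\fe$ is bounded harmonic for the single-particle semigroup (from $P_s^\fe f_t = f_{s+t}$ and passing $t\to\infty$) and then invokes Choquet--Deny on the irreducible translation-invariant kernel $a(\cdot,\cdot)$ to conclude constancy \emph{before} identifying the weak limit. You instead first establish existence of the weak limit (with the a~priori $\xi$-dependent value $\E_\xi[f^\fe(\zeta_\tau)]$), then use that any weak limit must be an equilibrium, hence a mixture $(1-\theta_\fe)\delta_{\mathbf 0}+\theta_\fe\delta_\fe$, and read off $f^\fe(i,\alpha)=\theta_\fe$ by evaluating $\psi_{i,\alpha}$. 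Both are valid; the paper's route has the mild advantage that constancy is available earlier, so the limit $\E_\xi[f^\fe(\zeta_\tau)]$ is immediately seen to equal $\theta_\fe$ without the extra Feller/equilibrium step.

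Second, the step you flag as the main obstacle is indeed the only delicate one, and your suggested fix---``iterate the pairwise result using that the particle number is non-increasing''---does not go through as stated, because the dual particles \emph{interact}: the two-particle statement in \cite[Theorem~3.17]{HN01} concerns a dual started from exactly two particles, not two tagged lineages inside a larger interacting system. The paper isolates this as a separate lemma (``lineage consistency''): it shows that a.s.\ pairwise coalescence implies a.s.\ total coalescence by running the \emph{forward} process from the product-binomial law $\otimes_i\,\mathrm{Bin}(N_i,\theta)\otimes\mathrm{Bin}(M_i,\theta)$, for which the equilibrium is known to be $(1-\theta)\delta_{\mathbf 0}+\theta\delta_\fe$, and then using duality to compute $\theta=\lim_t\E_\fe^\varsigma[\theta^{|Z^\fe_*(t)|}]=\E_\fe^\varsigma[\theta^{I}]$ for the a.s.\ limit $I$ of the particle count; since this holds for every $\theta\in(0,1)$, necessarily $I=1$ a.s. You should replace the iteration sketch with this argument (or cite it).
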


The following corollary states that if the process $\sZ$ exhibits clustering and starts from an initial distribution that puts a constant density of type $\heartsuit$ individuals at \emph{infinity}, then with probability 1 the spatial process $\sZ$ converges towards a mono-type equilibrium. Further, the probability of fixation to the all type-$\heartsuit$ configuration in the attained equilibrium is given by the initial density of type $\heartsuit$ in the populations at \emph{infinity}.
\begin{corollary}
	\label{coro:constant-density-at-infinity}
	Suppose that the process $\sZ$ {\corrected exhibits clustering in the sense of Definition~{\rm \ref{defn:clustering-coexistence}}} and $\mu_t^\fe$ denotes the time-$t$ distribution of the process, where $\fe:=(N_i,M_i)_{i\in\Z^d}\in\cA$ is fixed arbitrarily. If the initial distribution $\mu^\fe:=\mu_0^\fe$ is such, that for some $\theta_\fe\in[0,1]$,
	\begin{equation}
		\label{eqn:homogeneous-density-at-infinity}
		\lim\limits_{\corrected \|i\|\to\infty}\int_{\sX} \tfrac{X_i}{N_i}\,\d\mu^\fe\{(X_k,Y_k)_{k\in\Z^d}\} = \lim\limits_{\corrected \|i\|\to\infty}\int_{\sX} \tfrac{Y_i}{M_i}\,\d\mu^\fe\{(X_k,Y_k)_{k\in\Z^d}\} = \theta_\fe,
	\end{equation}
	then
	\begin{equation}
		\lim\limits_{t\to\infty}\mu_t^\fe = (1-\theta_\fe)\delta_{\mathbf{0}}+\theta_\fe\delta_{\fe}.
	\end{equation}
\end{corollary}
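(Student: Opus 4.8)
\textbf{Proof proposal for Corollary~\ref{coro:constant-density-at-infinity}.}
The plan is to deduce the corollary directly from Theorem~\ref{thm:domain-of-attr-cluster}, so the only thing to verify is that the hypothesis \eqref{eqn:homogeneous-density-at-infinity} forces condition (b) of the theorem to hold with the limit in \eqref{eqn:limiting-behaviour-of-kernel} equal to $\theta_\fe$ for every $(i,\alpha)\in G$; the conclusion about $\lim_{t\to\infty}\mu_t^\fe$ is then exactly the final assertion of the theorem. Fix $(i,\alpha)\in G$ and abbreviate
\[
	g(j,\beta):=\E_{\mu^\fe}\Big[\beta\,\tfrac{X^\fe_j(0)}{N_j}+(1-\beta)\,\tfrac{Y^\fe_j(0)}{M_j}\Big],\qquad (j,\beta)\in G,
\]
which is a bounded function on $G$ ($0\le g\le 1$). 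Hypothesis \eqref{eqn:homogeneous-density-at-infinity} says precisely that $g(j,\beta)\to\theta_\fe$ as $\|j\|\to\infty$, uniformly in $\beta\in\{0,1\}$. The quantity whose limit we must compute is $\E_{(i,\alpha)}^\fe[g(\Theta^\fe(t))]=\sum_{(j,\beta)}p_t^\fe((i,\alpha),(j,\beta))\,g(j,\beta)$.

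The key step is to show $\E_{(i,\alpha)}^\fe[g(\Theta^\fe(t))]\to\theta_\fe$ as $t\to\infty$. Write $g=\theta_\fe+h$ with $h$ bounded and $h(j,\beta)\to 0$ as $\|j\|\to\infty$; since the kernel is a probability kernel on $G$, $\E_{(i,\alpha)}^\fe[g(\Theta^\fe(t))]=\theta_\fe+\E_{(i,\alpha)}^\fe[h(\Theta^\fe(t))]$, so it suffices to prove $\E_{(i,\alpha)}^\fe[h(\Theta^\fe(t))]\to 0$. This will follow once we know that the spatial coordinate $x_t^\fe$ of the single-particle dual is \emph{transient}, i.e.\ $x_t^\fe\to\infty$ in probability (indeed $P_{(i,\alpha)}^\fe(\|x_t^\fe\|\le R)\to 0$ for every $R$) as $t\to\infty$: given $\varepsilon>0$ choose $R$ with $|h(j,\beta)|\le\varepsilon$ for $\|j\|>R$, split the expectation over $\{\|x_t^\fe\|\le R\}$ and its complement, and bound the first piece by $\|h\|_\infty\,P_{(i,\alpha)}^\fe(\|x_t^\fe\|\le R)$ and the second by $\varepsilon$; letting $t\to\infty$ and then $\varepsilon\downarrow 0$ gives the claim. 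The transience of $x_t^\fe$ is in turn equivalent to the clustering hypothesis: by \cite[Theorem~3.17]{HN01} the process $\sZ$ is in the clustering regime iff the (interacting) dual eventually has all its particles active infinitely rarely at any fixed site — concretely, for a single particle this is the statement that $x_t^\fe$ does not return to a bounded region with positive probability in the limit. I would invoke the characterization of clustering from \cite{HN01,HN02} to assert exactly this transience property of the single-particle dual $\Theta^\fe$; on $\Z^d$ with $d\le 2$ and a recurrent migration kernel clustering holds and the particle, despite being recurrent \emph{among the sites it visits when active}, nonetheless escapes every bounded set because of the interplay of the seed-bank delays with recurrence — but the cleanest route is simply to cite that clustering $\Leftrightarrow$ the relevant escape property, which is precisely what powers the ``only if'' direction (a)$\Rightarrow$(b) of Theorem~\ref{thm:domain-of-attr-cluster} and is therefore already available.

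With $\E_{(i,\alpha)}^\fe[g(\Theta^\fe(t))]\to\theta_\fe$ established, condition (b) of Theorem~\ref{thm:domain-of-attr-cluster} holds with $f^\fe\equiv\theta_\fe$, hence condition (a) holds, and the final line of the theorem gives $\lim_{t\to\infty}\mu_t^\fe=(1-\theta_\fe)\delta_{\mathbf 0}+\theta_\fe\delta_{\fe}$, which is the assertion of the corollary. The main obstacle I anticipate is not any delicate estimate but rather making the link ``clustering $\Rightarrow$ the single-particle dual escapes every bounded set'' fully rigorous: one must be careful that the clustering criterion in \cite[Theorem~3.17]{HN01} is phrased in terms of the \emph{two-particle} or \emph{interacting} dual (coalescence probabilities), and extract from it the one-particle transience statement — most likely via a monotonicity or domination argument comparing one particle to the relevant functional of the full dual, or by re-reading the proof of Theorem~\ref{thm:domain-of-attr-cluster} where the same fact must already be used. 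Everything else is a routine $\varepsilon$/$R$ truncation on the compact-plus-vanishing-at-infinity function $g$.
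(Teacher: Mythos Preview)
Your overall strategy --- reduce to condition (b) of Theorem~\ref{thm:domain-of-attr-cluster} via an $\varepsilon/R$ truncation of $g=\theta_\fe+h$ --- is exactly the paper's approach. The paper defines the same function, picks $N$ so that $|g-\theta_\fe|<\varepsilon$ outside $\Lambda_N\times\{0,1\}$, splits the sum, and concludes using $P_\eta^\fe(\Theta^\fe(t)\in\Lambda_N\times\{0,1\})\to 0$.

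Where your proposal goes astray is in the justification of that last fact. You assert that ``the transience of $x_t^\fe$ is in turn equivalent to the clustering hypothesis'' and then try to extract it from the two-particle coalescence criterion of \cite[Theorem~3.17]{HN01}. This is both wrong and unnecessary. Clustering is \emph{not} equivalent to the single-particle escape property: the escape from bounded sets holds for \emph{every} admissible environment, whether or not $\sZ$ clusters, and it is not what drives the (a)$\Rightarrow$(b) direction of Theorem~\ref{thm:domain-of-attr-cluster} (that direction uses only duality and the assumed weak convergence). The paper simply writes ``since $\Lambda_N$ is finite, $\lim_{t\to\infty}P_\eta^\fe(\Theta^\fe(t)\in\Lambda_N\times\{0,1\})=0$'', treating it as elementary. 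The underlying reason is that $\Theta^\fe$ is an irreducible continuous-time Markov chain on the countably infinite state space $G$ with no positive-recurrent state: any stationary probability measure $\pi$ would satisfy $\pi(i,0)=K_i^{-1}\pi(i,1)$ and make $i\mapsto\pi(i,1)$ a nonnegative summable $a(\cdot,\cdot)$-harmonic function on $\Z^d$, hence constant by Choquet--Deny (cf.\ Lemma~\ref{lemma:contant-harmonic-single-particle}), hence identically zero --- a contradiction. With no positive recurrence, $p_t^\fe(\eta,\xi)\to 0$ pointwise for every $\xi$, and the finite sum over $\Lambda_N\times\{0,1\}$ vanishes.

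So the fix is simple: drop the attempt to link the escape property to clustering, and justify it instead by the absence of a stationary probability distribution for $\Theta^\fe$. The rest of your argument is fine and coincides with the paper's.
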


\noindent
The following corollary is immediate.
\begin{corollary}
\label{coro:concluding-corollary-on-domain-of-attraction}
Suppose that Assumption~{\rm \ref{assumpt1}} and Assumption~{\rm\ref{assumpt:clustering-env}} are in force. {\corrected Then, the process $\sZ$ exhibits clustering, and consequently, the results in Theorem~{\rm \ref{thm:domain-of-attr-cluster}} and Corollary~{\rm\ref{coro:constant-density-at-infinity}} hold.}
\end{corollary}

\subsection{Clustering in random environment}
\label{sec-clustering-in-random-env}
In this section we consider the process $\sZ$ in a static random environment $\fe$. Let us introduce the necessary notations before we present our main theorems. To simplify our analysis, we only consider \emph{uniformly elliptic} environments.
\begin{definition}{\bf[Uniformly elliptic environment]}
	\label{defn:elliptic-environment}
	{\rm An environment $\fe:=(N_i,M_i)_{i\in\Z^d} \in (\N^2)^{\Z^d}$ is said to be \emph{uniformly elliptic} if	\begin{equation}
		\label{eqn:elliptic-environment}
	(N_i,M_i)\in \{2,3,\ldots,\fK\}^2
	\end{equation}
	for all $i\in\Z^d$ and some natural number $\fK\geq 2$. The set of all environments satisfying \eqref{eqn:elliptic-environment} is denoted by $\cE_\fK$.}\hfill$\blacklozenge$
\end{definition} 

\noindent
From here onwards we fix a natural number $\fK\geq 2$, which we refer to as the ellipticity constant. We equip $\cE_\fK$ with the product topology and the Borel $\sigma$-field $\Sigma$. The product topology is naturally induced by the metric $\mathcal{H}\,:\cE_\fK\times\cE_\fK\to[0,\infty),$
	\begin{equation}
		\mathcal{H}((N_i,M_i)_{i\in\Z^d},(\widehat{N}_i,\widehat{M}_i)_{i\in\Z^d}) := \sum_{i\in\Z^d}\frac{1}{2^{\corrected\|i\|}}\big[1\wedge(|N_i-\widehat{N}_i|+|M_i-\widehat{M}_i|)\big].
	\end{equation}
In this metric topology, $\cE_\fK$ is a compact Polish space, and the Borel $\sigma$-field $\Sigma$ becomes countably generated.
{\corrected
\begin{remark}{\bf [Admissibility of uniformly elliptic environments]}
	\label{rem:admissibility-of-elliptic-environmets}
	 It does not immediately follow from Definition~\ref{defn:admissible-colony-sizes} that $\cE_\fK\subseteq\cA$, without the imposition of further moment conditions on the migration kernel $a(\cdot\,,\cdot)$. However, in view of Remark~\ref{rem:higher-moments-of-migration-kernel} and Definition~\ref{defn:elliptic-environment}, without loss of generality, we can enlarge the set of admissible environments $\cA$ to include $\cE_\fK$, and so the process $\sZ$ is well-defined for any $\fe\in\cE_\fK$. Furthermore, any $\fe\in\cE_\fK$ automatically satisfies conditions (3)--(4) in Assumption~\ref{assumpt:clustering-env}.
\end{remark}
}

\medskip\noindent
\begin{definition}{\bf[Translation operators]}
\label{defn:shift-operators}
{\rm For each $j\in\Z^d,$ the shift operator $T_j\,:\,\cE_\fK\to\cE_\fK$ is defined by the map
\begin{equation}
	\fe\mapsto T_j\fe,\quad T_j\fe := (N_{i+j},M_{i+j})_{i\in\Z^d},
\end{equation}
where $\fe:=(N_i,M_i)_{i\in\Z^d}\in\cE_\fK$. The action of $T_j$ on a set is interpreted pointwise, i.e., for $A\subset\cE_\fK$,  $T_jA := \{T_j\fe\,:\,\fe\in A\}$.}\hfill$\blacklozenge$
\end{definition}

\medskip\noindent
We impose the following assumption on the law of the random environment:
\begin{assumption}{\bf[Translation-invariant and ergodic field]}
	\label{assump:environment-law}
	The probability law $\bP$ of the random environment $\fe$ is defined on the measurable Polish space $(\cE_\fK,\Sigma)$ and satisfies:
	\begin{itemize}
		\item[\rm{(1)}] For any $A\in\Sigma$ and $j\in\Z^d$, $\bP(T_j^{-1}A) = \bP(A)$.
		\item[\rm{(2)}] If $A\in\Sigma$ is such that $T_j^{-1}A = A$ for all $j\in\Z^d$, then $\bP(A)\in\{0,1\}$.
	\end{itemize}
We use $\bar{\E}$ to denote the expectation w.r.t.\ $\bP$.
\end{assumption}

\medskip\noindent
We are now ready to state the main result of this section.
\begin{theorem}{\bf [Convergence in random environment]}
	\label{thm:random-env-clustering}
	Let $f_A,f_D\,:\cE_\fK\to [0,1]$ be two $\Sigma$-measurable functions such that, for $\bP$-almost every realization of $\fe:=(N_i,M_i)_{i\in\Z^d}$, the initial law $\mu^\fe\in\mathcal{P}(\sX)$ of the process $\sZ$ satisfies the following for all $i\in\Z^d$:
	\begin{equation}
		\label{eqn:consistent-initial-condition}
			\int_{\sX} \tfrac{X_i}{N_i}\,\d\mu^\fe\{(X_k,Y_k)_{k\in\Z^d}\} = f_A(T_i\fe),\quad
			\int_{\sX}\tfrac{Y_i}{M_i}\,\d\mu^\fe\{(X_k,Y_k)_{k\in\Z^d}\} = f_D(T_i\fe).
	\end{equation}
If Assumption~{\rm \ref{assumpt1}} and conditions {\rm (1)--(2)} in Assumption~{\rm \ref{assumpt:clustering-env}} hold, then, for $\bP$-almost every realization of the environment $\fe$, $\sZ(t)$ converges in law to $(1-\theta)\delta_\mathbf{0} + \theta\,\delta_\fe$ as $t\to\infty$,
where the fixation probability $\theta$ to the all type-$\heartsuit$ configuration $\fe\in\sX$ does not depend on the realization of the environment and is given by
\begin{equation}
	\label{eqn:value-of-fixation-probability}
	\theta = \frac{1}{1+\rho}\int_{\cE_\fK}\big[f_A((N_k,M_k)_{k\in\Z^d})+\tfrac{M_0}{N_0}f_D((N_k,M_k)_{k\in\Z^d})\big]\,\d\bP\{(N_k,M_k)_{k\in\Z^d}\},
\end{equation} 
with $\rho := \bar{\E}\big[\tfrac{M_0}{N_0}\big]=\int_{\cE_\fK}\tfrac{M_0}{N_0}\,\d\bP\{(N_k,M_k)_{k\in\Z^d}\}$, the average relative strength of the seed-bank in each colony.
\end{theorem}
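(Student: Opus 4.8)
\medskip\noindent\emph{Strategy of proof.}
The plan is to deduce the statement from Theorem~\ref{thm:domain-of-attr-cluster} by verifying its condition~(b) for $\bP$-almost every $\fe$ and then reading off the limit. Note first that every $\fe\in\cE_\fK$ lies in $\cA$ and automatically satisfies conditions~(3)--(4) of Assumption~\ref{assumpt:clustering-env}, so under Assumption~\ref{assumpt1} and conditions~(1)--(2) of Assumption~\ref{assumpt:clustering-env} the process $\sZ$ is in the clustering regime and Theorem~\ref{thm:domain-of-attr-cluster} applies for every $\fe\in\cE_\fK$. Introduce the bounded measurable map
\[
h\colon\cE_\fK\times\{0,1\}\to[0,1],\qquad h(\omega,\beta):=\beta\,f_A(\omega)+(1-\beta)\,f_D(\omega).
\]
By the consistency hypothesis \eqref{eqn:consistent-initial-condition} we have $\E_{\mu^\fe}\big[\beta\tfrac{X^\fe_j(0)}{N_j}+(1-\beta)\tfrac{Y^\fe_j(0)}{M_j}\big]=h(T_j\fe,\beta)$, so the expression inside the limit in \eqref{eqn:limiting-behaviour-of-kernel} equals
\[
\sum_{(j,\beta)\in G}p_t^\fe\big((i,\alpha),(j,\beta)\big)\,h(T_j\fe,\beta)=\E^\fe_{(i,\alpha)}\big[h\big(T_{x^\fe_t}\fe,\alpha^\fe_t\big)\big],
\]
i.e.\ the expectation of $h$ along the \emph{environment seen from the single dual particle}, $\Xi_t:=(T_{x^\fe_t}\fe,\alpha^\fe_t)$. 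Reading off the rates \eqref{eqn:trans-rates-of-theta}, one sees that $\Xi=(\Xi_t)_{t\ge 0}$ is itself a Markov process on the compact space $\cE_\fK\times\{0,1\}$ whose generator does \emph{not} depend on $\fe$ (only its initial state $(T_i\fe,\alpha)$ does): from $(\omega,1)$ it moves to $(\omega,0)$ at rate $\lambda$ and to $(T_z\omega,1)$ at rate $a(0,z)$, and from $(\omega,0)$ it moves to $(\omega,1)$ at rate $\lambda\,N_0(\omega)/M_0(\omega)$.

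It therefore suffices to prove the following quenched homogenization statement, which I expect to be the main obstacle: for $\bP$-a.e.\ $\fe$ and every starting point, the law of $\Xi_t$ converges weakly as $t\to\infty$ to one and the same probability measure $\pi$ on $\cE_\fK\times\{0,1\}$, and moreover $\E^\fe_{(i,\alpha)}[h(\Xi_t)]\to\int h\,\d\pi$. I would identify $\pi$ first. The single-particle chain $\Theta^\fe$ on $G=\Z^d\times\{0,1\}$ is reversible with respect to the $\sigma$-finite measure $m_\fe(i,\beta):=\beta+(1-\beta)\,M_i/N_i$: detailed balance for the wake/sleep pair reads $m_\fe(i,0)\,\lambda K_i=m_\fe(i,1)\,\lambda$, and among active sites it is the symmetry $a(0,z)=a(0,-z)$ (condition~(1) of Assumption~\ref{assumpt:clustering-env}). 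Pairing this reversible weight at the target colony with the translation-invariant law $\bP$ of the environment leads to the candidate stationary law of $\Xi$,
\[
\pi(\d\omega,\beta)=\frac{1}{1+\rho}\Big[\beta+(1-\beta)\,\frac{M_0(\omega)}{N_0(\omega)}\Big]\,\bP(\d\omega),\qquad\beta\in\{0,1\},
\]
which is a probability measure since uniform ellipticity forces $\rho=\bar{\E}[M_0/N_0]\in[2/\fK,\fK/2]$; one then checks that $\pi$ is reversible for $\Xi$ and, invoking ergodicity of $\bP$ (Assumption~\ref{assump:environment-law}), that it is the \emph{unique} ergodic stationary law. The weak convergence $\Xi_t\to\pi$ I would establish through a spectral analysis of the transition semigroup of $\Xi$, which is self-adjoint on $L^2(\pi)$: recurrence and the LCLT for $a(\cdot\,,\cdot)$ (conditions~(1)--(2) of Assumption~\ref{assumpt:clustering-env}) should force the peripheral point-spectrum to reduce to the simple eigenvalue $1$, so that — via the Markov-chain/peripheral-spectrum proposition of Appendix~\ref{sec-fundamental-theorem-of-MC} — $\Xi_t$ converges weakly to $\pi$ from every starting point, while on $L^2(\pi)$ one gets $\E^\fe_{(i,\alpha)}[g(\Xi_t)]\to\int g\,\d\pi$. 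Since uniform ellipticity makes the $\cE_\fK$-marginal of $\pi$ equivalent to $\bP$ with density bounded away from $0$ and $\infty$, this $L^2$ convergence can be upgraded — using density of $C(\cE_\fK\times\{0,1\})$ in $L^1(\pi)$ together with a maximal inequality for the reversible semigroup — to $\E^\fe_{(i,\alpha)}[h(\Xi_t)]\to\int h\,\d\pi$ for $\bP$-a.e.\ $\fe$, all $(i,\alpha)\in G$, and every bounded measurable $h$; this last step is what accommodates the mere measurability of $f_A$ and $f_D$.

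Granting the above, condition~(b) of Theorem~\ref{thm:domain-of-attr-cluster} holds for $\bP$-a.e.\ $\fe$ with
\[
f^\fe(i,\alpha)\equiv\theta:=\int_{\cE_\fK\times\{0,1\}}h\,\d\pi=\frac{1}{1+\rho}\int_{\cE_\fK}\Big[f_A(\omega)+\frac{M_0(\omega)}{N_0(\omega)}\,f_D(\omega)\Big]\,\bP(\d\omega),
\]
which is visibly independent of the realization of $\fe$ and coincides with the right-hand side of \eqref{eqn:value-of-fixation-probability}. Theorem~\ref{thm:domain-of-attr-cluster} then yields, for $\bP$-a.e.\ $\fe$, that $\mu_t^\fe$ converges weakly to $(1-\theta)\delta_{\mathbf{0}}+\theta\,\delta_\fe$ as $t\to\infty$; equivalently, $\sZ(t)$ converges in law to this mono-type equilibrium with the asserted deterministic fixation probability. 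The bulk of the remaining work — the quenched weak convergence of $\Xi$ with enough regularity to handle measurable test functions — is precisely what Section~\ref{sec-single-particle-proc-in-random-env} is devoted to.
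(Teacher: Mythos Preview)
Your proposal is correct and follows the same overall architecture as the paper: reduce to condition~(b) of Theorem~\ref{thm:domain-of-attr-cluster} via the consistency hypothesis, identify the reversible stationary law of the environment-seen-from-the-particle process (your $\pi$ is exactly the paper's $\Q$ in Theorem~\ref{thm:invariant-distribution-of-W}), and establish quenched weak convergence through a spectral argument that rules out any nontrivial peripheral eigenvalue.

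The one technical difference worth flagging is that you propose to work directly with the continuous-time environment process $\Xi$, whereas the paper first uniformizes $\Theta^\fe$ by a Poisson clock to obtain a discrete-time subordinate chain $\widehat{\Theta}^\fe$ (Lemma~\ref{lemma:poisson-clock-representation}), proves the spectral and convergence statements for the associated discrete environment chain (Lemma~\ref{lemma:peripheral-spectrum-of-kernel} and Theorem~\ref{thm:weak-convergence-of-discrete-env}), and only then transfers the result back to continuous time. Your continuous-time route is in principle cleaner---for a reversible semigroup $e^{tL}$ the spectrum lies in $[0,1]$, so the eigenvalue $-1$ issue that Proposition~\ref{prop:fundamental-theorem-of-MC} is designed to handle never arises---but note that Proposition~\ref{prop:fundamental-theorem-of-MC} as stated is for discrete chains, so you would either need a continuous-time analogue or apply it to the time-$1$ map. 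The paper's detour through discrete time makes the application of Proposition~\ref{prop:fundamental-theorem-of-MC} literal, and the key subharmonic-function argument (Lemma~\ref{lemma:peripheral-spectrum-of-kernel}) showing $-1$ is not an eigenvalue uses only recurrence of $\hat{p}(\cdot)$, not the full LCLT.
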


\medskip\noindent
Let us look at a simple example where the conditions in the above theorem are met.
\begin{example}{\bf [Homogenized fixation probability]}
	Fix $\kappa\in[0,1]$. Suppose that, for a typical environment $\fe:=(N_i,M_i)_{i\in\Z^d}$ drawn from the law $\bP$, the process $\sZ$ starts with the initial law $\mu^\fe\in\mathcal{P}(\sX)$ given by
	\begin{equation}
		\mu^\fe := \bigotimes_{i\in\Z^d}\text{Binomial}(N_i,\tfrac{\kappa}{N_i})\otimes \text{Uniform}([M_i]).
	\end{equation}
In other words, in the spatial system of populations with sizes $(N_i,M_i)_{i\in\Z^d}$, initially each active individual of colony $i$ independently adopts type $\heartsuit$ with probability $\tfrac{\kappa}{N_i}$, and the number of type-$\heartsuit$ dormant individuals, which is given by $Y_i^\fe(0)$, is uniformly distributed over $[M_i]=\{0,1,\ldots,M_i\}$. In this case, if we let $f_A\,:\cE_\fK\to[0,1]$ to be the map $\fe \mapsto \tfrac{\kappa}{N_0}$ and $f_D\,:\cE_\fK\to[0,1]$ to be the constant map $\fe\mapsto\tfrac{1}{2}$, then $\mu^\fe$ satisfies
\begin{equation}
\begin{aligned}
		\E_{\mu^\fe}\big[\tfrac{X_i^\fe(0)}{N_i}\big] = \tfrac{\kappa}{N_i} = f_A(T_i\fe),\quad
	\E_{\mu^\fe}\big[\tfrac{Y_i^\fe(0)}{M_i}\big] = \tfrac{1}{2} = f_D(T_i\fe),
\end{aligned}
\end{equation}
for all $i\in\Z^d$. Thus, if the migration kernel $a(\cdot\,,\cdot)$ is symmetric, recurrent and satisfies a LCLT, then by Theorem~\ref{thm:random-env-clustering} we have that, for $\bP$-almost every realization of $\fe$, the process $\sZ$ converges in law to $(1-\theta)\delta_\mathbf{0} + \theta \delta_{\fe}$, where $\theta$ is given by
\begin{equation}
	\theta = \frac{1}{1+\bar{\E}[M_0/N_0]}\left[\bar{\E}\big[\tfrac{\kappa}{N_0}\big]+\tfrac{1}{2}\bar{\E}\big[\tfrac{M_0}{N_0}\big]\right].
\end{equation}
This tells that, in the long run, the probability of fixation of the spatial population to the all type-$\heartsuit$ configuration is $\theta$ and does not depend on the realization of the environment $\fe$. Another interesting observation is that the fixation probability  $\theta$ is an annealed average of the densities of type-$\heartsuit$ individuals. Therefore, $\theta$ is a function of the average type-$\heartsuit$ densities determined by the initial distribution $\mu^\fe$ and does \emph{not} depend on any other parameters of the distribution.
\hfill$\blacklozenge$
\end{example}
The proof of Theorem~\ref{thm:random-env-clustering} relies on the analysis of the single-particle process $\Theta^\fe$ in Definition~\ref{defn:single-particle-dual} in a random environment $\fe$ drawn from the law $\bP$. In particular, at the heart of the proof lies an exploitation of the following homogenization result, whose proof is deferred to Section~\ref{sec-homogen-in-cont-time}.
\begin{theorem}{\bf [Homogenization of environment]}
	\label{thm:weak-convergence-of-contin-env}
	Let $f_A\,:\cE_\fK\to \R$ and $f_D\,:\cE_\fK\to \R$ be two bounded $\Sigma$-measurable functions. Then, under Assumption~{\rm \ref{assumpt1}} and conditions {\rm(1)--(2)} in Assumption~{\rm \ref{assumpt:clustering-env}}, for $\bP$-almost every realization of $\fe$ and any $\alpha\in\{0,1\}$,
	\begin{equation}
		\label{eqn:deterministic-limit-of-functionals}
		\lim\limits_{t\to\infty}\sum_{(j,\beta)\in G}p_t^\fe((0,\alpha),(j,\beta))\big[\beta f_A(T_j\fe)+(1-\beta)f_D(T_j\fe)\big] = \theta,
	\end{equation} 
where $p^\fe_t(\cdot\,,\cdot)$ is the time-$t$ transition kernel of the single-particle dual process $\Theta^\fe$ given in Definition~{\rm\ref{defn:single-particle-dual}}, and
\begin{equation}
	\label{eqn:value-of-homogenised-average}
	\theta := \frac{1}{1+\rho}\int_{\cE_\fK}\big[f_A((N_k,M_k)_{k\in\Z^d})+\tfrac{M_0}{N_0}f_D((N_k,M_k)_{k\in\Z^d})\big]\,\d\bP\{(N_k,M_k)_{k\in\Z^d}\},
\end{equation}
with $\rho := \bar{\E}\big[\tfrac{M_0}{N_0}\big]=\int_{\cE_\fK}\tfrac{M_0}{N_0}\,\d\bP\{(N_k,M_k)_{k\in\Z^d}\}.$
\end{theorem}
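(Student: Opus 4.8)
\medskip\noindent
\textbf{Proof idea.}
The plan is to recast the functional on the left of \eqref{eqn:deterministic-limit-of-functionals} in terms of the \emph{environment viewed from the single particle} and to establish its quenched convergence via a spectral analysis of the associated Markov semigroup. Fix $\fe\in\cE_\fK$, write $\Theta^\fe(t)=(x_t^\fe,\alpha_t^\fe)$ as in Definition~\ref{defn:single-particle-dual}, and consider
\[
	\zeta(t):=(T_{x_t^\fe}\fe,\,\alpha_t^\fe)\ \in\ E:=\cE_\fK\times\{0,1\}.
\]
Since in \eqref{eqn:trans-rates-of-theta} the rates out of $(i,\alpha)$ depend on $\fe$ only through $K_i=N_0(T_i\fe)/M_0(T_i\fe)$, while migration acts by $T_i\fe\mapsto T_{i+h}\fe=T_h(T_i\fe)$, the process $\zeta$, started from $(\fe,\alpha)$, is a Feller Markov process on the compact metric space $E$ whose transition rates do not depend on the initial environment; moreover its jump rates are bounded above and, by uniform ellipticity --- which forces the wake-up rate $\lambda K_i\geq2\lambda/\fK$ --- bounded below, so $\zeta$ is non-explosive and has no deep traps. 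With $g(\fe',\beta):=\beta f_A(\fe')+(1-\beta)f_D(\fe')$ and $\E^\fe_\eta$ denoting expectation under $P^\fe_\eta$, one has for every $\fe$ and $\alpha$
\[
	\sum_{(j,\beta)\in G}p_t^\fe((0,\alpha),(j,\beta))\big[\beta f_A(T_j\fe)+(1-\beta)f_D(T_j\fe)\big]=\E^\fe_{(0,\alpha)}\big[g(\zeta(t))\big],
\]
so it suffices to show $\E^\fe_{(0,\alpha)}[g(\zeta(t))]\to\int_E g\,\d\pi$ for $\bP$-a.s.\ $\fe$ and both $\alpha\in\{0,1\}$, where $\pi$ is a suitable probability measure on $E$ with $\int_E g\,\d\pi=\theta$.

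I would next identify $\pi$ together with the reversibility of $\zeta$. The chain $\Theta^\fe$ on $G$ is reversible for the infinite weight $\nu^\fe(i,1)=1$, $\nu^\fe(i,0)=M_i/N_i$: detailed balance for the exchange moves reads $\nu^\fe(i,0)\,\lambda K_i=\nu^\fe(i,1)\,\lambda$, and for migration it follows from $a(0,h)=a(0,-h)$. Averaging this local weight against the environment law and using translation-invariance points to the measure $Q$ on $E$ with $Q(\d\fe',1)=\bP(\d\fe')$ and $Q(\d\fe',0)=\tfrac{M_0}{N_0}\,\bP(\d\fe')$; its total mass is $1+\rho$, so $\pi:=\tfrac{1}{1+\rho}\,Q$ is a probability measure and $\int_E g\,\d\pi=\tfrac{1}{1+\rho}\bar{\E}\big[f_A+\tfrac{M_0}{N_0}f_D\big]=\theta$, exactly as in \eqref{eqn:value-of-homogenised-average}. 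The existence of a stationary distribution for $\zeta$ is supplied by Appendix~\ref{sec-stationarity-n-large-numbers}; that $\pi$ is in fact \emph{reversible} for $\zeta$ I would verify directly on the generator $\mathcal{L}$ --- the exchange part is symmetrised by the identity $K_0\cdot\tfrac{M_0}{N_0}=1$, and the migration part by combining $a(0,h)=a(0,-h)$ with $\bP\circ T_h^{-1}=\bP$. Consequently $\mathcal{L}$ is self-adjoint and $\leq0$ on $L^2(\pi)$, so its spectrum lies in $(-\infty,0]$ and its only point on the imaginary axis is $0$.

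To determine the multiplicity of $0$ I would show that $\pi$ is \emph{ergodic} for $\zeta$: any $\zeta$-invariant set is, up to $\pi$-null sets, independent of the coordinate $\beta$ (the exchange moves connect $(\fe',0)$ with $(\fe',1)$) and invariant under every shift $T_h$ with $a(0,h)>0$ (the migration moves), hence $\bP$-trivial by Assumption~\ref{assump:environment-law}(2), because the support of $a(\cdot\,,\cdot)$ generates $\Z^d$ by irreducibility. Thus $0$ is a simple eigenvalue of $\mathcal{L}$ with the constants as eigenspace, and the peripheral point spectrum of $\mathcal{L}$ equals $\{0\}$. (Conditions (1)--(2) of Assumption~\ref{assumpt:clustering-env} enter at this stage through, respectively, the reversibility identities above and the regularity of the kernel $p_t^\fe$ needed to make the ergodicity and spectral estimates effective.) By the spectral theorem this already yields $P_t g\to\int_E g\,\d\pi$ in $L^2(\pi)$, where $(P_t g)(\fe,\alpha):=\E^\fe_{(0,\alpha)}[g(\zeta(t))]$.

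The main obstacle is to upgrade this $L^2(\pi)$ convergence to the pointwise \emph{quenched} statement: in the recurrent, diffusive regime imposed by Assumption~\ref{assumpt:clustering-env}(2) there is no spectral gap, convergence may be arbitrarily slow, and the usual contraction estimates are unavailable. Here I would invoke the proposition of Appendix~\ref{sec-fundamental-theorem-of-MC} (applied, if necessary, to a discrete-time skeleton of $\zeta$), which converts the information ``peripheral point spectrum $=\{0\}$, simple'' into weak convergence $\zeta(t)\Rightarrow\pi$ from $\pi$-a.e.\ starting point; since uniform ellipticity keeps $M_0/N_0$ bounded away from $0$ and $\infty$, the restriction of $\pi$ to either sheet $\cE_\fK\times\{\beta\}$ is equivalent to $\bP$, and hence the convergence holds for $\bP$-a.s.\ $\fe$ and both $\alpha$ after a union of two $\bP$-null sets. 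Finally, as $f_A,f_D$ are merely bounded and $\Sigma$-measurable, I would close the argument by approximating them in $L^1(\bP)$ by continuous functions and using the uniform-in-$t$ estimate $\bar{\E}\big[\E^\fe_{(0,\alpha)}[\,|h(\zeta(t))|\,]\big]\leq C\int_{\cE_\fK}|h|\,\d\bP$ for bounded measurable $h$, which follows from $\pi$-stationarity together with uniform ellipticity, to control the resulting error.
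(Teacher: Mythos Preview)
Your overall architecture is the same as the paper's: recast the left-hand side as the semigroup of the environment-viewed-from-the-particle process acting on $g$, identify the invariant measure $\pi$ (your formula matches the paper's $\Q$ in Theorem~\ref{thm:invariant-distribution-of-W}), verify reversibility and ergodicity, and then invoke Proposition~\ref{prop:fundamental-theorem-of-MC} to pass from $L^2$-convergence to quenched pointwise convergence, using Lemma~\ref{lemma:equivalence-of-pi-P}-type equivalence of $\pi$ and $\bP$ on each sheet. The execution, however, differs: the paper first uniformizes $\Theta^\fe$ by a Poisson clock (Lemma~\ref{lemma:poisson-clock-representation}) to obtain the \emph{discrete} subordinate chain $\widehat{\Theta}^\fe$ with environment kernel $\fR$, applies Proposition~\ref{prop:fundamental-theorem-of-MC} to $\fR$, and then Poisson-averages back to continuous time. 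Because $\fR=I+\mathcal{J}_\fe/(c+\lambda+\lambda\fK)$ is the jump-chain kernel rather than a time-$1$ map, it is not automatic that $-1$ is outside its point spectrum, and the paper spends Lemma~\ref{lemma:peripheral-spectrum-of-kernel} proving this --- that is precisely where recurrence (condition~(2) of Assumption~\ref{assumpt:clustering-env}) enters, via a Liouville-type argument for $\hat p(\cdot)$. Your continuous-time route, by contrast, would apply Proposition~\ref{prop:fundamental-theorem-of-MC} to a skeleton $P_\delta=e^{\delta\mathcal{L}}$, whose $L^2(\pi)$-spectrum lies in $(0,1]$ by self-adjointness of $\mathcal{L}\leq 0$; the hypothesis ``$-1$ not an eigenvalue'' is then free, and recurrence plays no role. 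Your sentence attributing condition~(2) to ``regularity of the kernel $p_t^\fe$ needed to make the ergodicity and spectral estimates effective'' is therefore misplaced: in your scheme the recurrence assumption is genuinely unused, which is an interesting sharpening rather than a gap --- but you should state this clearly instead of inventing a vague justification.

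Two smaller points. First, you do not explain how to pass from $P_n f\to\theta$ along the integer skeleton to $P_t f\to\theta$ for real $t\to\infty$; writing $P_t=P_{t-\lfloor t\rfloor}P_{\lfloor t\rfloor}$ and applying dominated convergence inside $P_{t-\lfloor t\rfloor}(x,\cdot)$ handles this, but the uniformity in the fractional part needs a word (the paper sidesteps this entirely via the Poisson average). Second, your final approximation step is unnecessary: Proposition~\ref{prop:fundamental-theorem-of-MC} already delivers the conclusion for every bounded \emph{measurable} $f$, not just continuous ones, so there is no need to approximate $f_A,f_D$ by continuous functions.
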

The interpretation of the above result is that, for $\bP$-almost every realization of the environment $\fe$, the law of the \emph{``environment viewed from the particle''} in the process $\Theta^\fe$ converges weakly to an invariant distribution. The precise meaning of the last statement will become clear in Section~\ref{sec-single-particle-proc-in-random-env}. Conditions (1)--(2) in Assumption~\ref{assumpt:clustering-env} play a crucial role in the proof. Theorem~\ref{thm:weak-convergence-of-contin-env} combined with Theorem~\ref{thm:domain-of-attr-cluster} enables us to prove Theorem~\ref{thm:random-env-clustering}. 

\medskip\noindent
Note that, in \eqref{eqn:deterministic-limit-of-functionals}, the process $\Theta^\fe$ is assumed to start at $(0,\alpha)\in G$. However, this does not matter, because the law of the environment is translation-invariant {\corrected and the time-$t$ probability transition kernel $p_t^\fe(\cdot\,,\cdot)$ satisfies
	\begin{equation}
		\label{eqn:change-of-environment-and-coordinate}
		p_t^{T_i\fe}((k,\alpha), (l,\beta)) = p_t^{\fe}((k+i,\alpha), (l+i,\beta))
	\end{equation}
	for any fixed environment $\fe\in\cE_\fK$, time $t\geq 0$, locations $i,k,l\in\Z^d$, and states $\alpha,\beta\in\{0,1\}$.}
Indeed, we have the following corollary:
\begin{corollary}
	\label{corollary:limit-from-any-start-point}
	Suppose that Assumption~{\rm\ref{assumpt1}} and conditions {\rm (1)--(2)} in Assumption~{\rm \ref{assumpt:clustering-env}} hold. Let $f_A,f_D$ and $\theta$ be as in Theorem~{\rm \ref{thm:weak-convergence-of-contin-env}}. Then, for $\bP$-almost every realization of $\fe$ and all $(i,\alpha)\in\Z^d\times\{0,1\}$,
	\begin{equation}
		\label{eqn:any-point-limit-of-functionals}
		\lim\limits_{t\to\infty}\sum_{(j,\beta)\in G}p_t^\fe((i,\alpha),(j,\beta))\big[\beta f_A(T_j\fe)+(1-\beta)f_D(T_j\fe)\big] = \theta,
	\end{equation}
where $p^\fe_t(\cdot\,,\cdot)$ is as in Definition~\rm {\ref{defn:single-particle-dual}}.
\end{corollary}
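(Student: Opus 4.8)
The plan is to derive Corollary~\ref{corollary:limit-from-any-start-point} from Theorem~\ref{thm:weak-convergence-of-contin-env} by exploiting the translation-covariance of the single-particle dynamics together with the translation-invariance of $\bP$ (Assumption~\ref{assump:environment-law}(1)): shifting the starting point $(i,\alpha)$ to the origin amounts to shifting the environment $\fe$ to $T_i\fe$, and the limit is unaffected because the exceptional set of environments is $\bP$-null and $\bP$ is shift-invariant.

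First I would record the elementary covariance identity for the transition kernel,
\begin{equation}
	\label{eqn:kernel-covariance}
	p_t^\fe\big((i,\alpha),(j,\beta)\big) \;=\; p_t^{T_i\fe}\big((0,\alpha),(j-i,\beta)\big),\qquad i,j\in\Z^d,\ \alpha,\beta\in\{0,1\},\ t\geq 0 .
\end{equation}
To see this, apply the spatial shift $k\mapsto k-i$ to the process $\Theta^\fe$ started at $(i,\alpha)$. The migration rates $a(0,\cdot)$ are translation-invariant by Assumption~\ref{assumpt1}(2), and the wake-up rate $\lambda[\alpha+(1-\alpha)K_k]$ of the particle at site $k$ in \eqref{eqn:trans-rates-of-theta} depends on the environment only through $K_k=N_k/M_k$; since the $k$-th coordinate of $T_i\fe$ is $(N_{k+i},M_{k+i})$, the value of $K_{k+i}$ in $\fe$ equals the value of $K_{k}$ in $T_i\fe$. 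Hence the shifted process $\big(x_t^\fe-i,\alpha_t^\fe\big)$, started at $(0,\alpha)$, has exactly the transition rates of $\Theta^{T_i\fe}$, which yields \eqref{eqn:kernel-covariance}.

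Next I would substitute $j=j'+i$ in the sum on the left-hand side of \eqref{eqn:any-point-limit-of-functionals}, use \eqref{eqn:kernel-covariance} and the cocycle identity $T_{j'+i}\fe = T_{j'}(T_i\fe)$ for the shift operators (Definition~\ref{defn:shift-operators}), to rewrite it as
\begin{equation}
	\label{eqn:reduced-functional}
	\sum_{(j',\beta)\in G} p_t^{T_i\fe}\big((0,\alpha),(j',\beta)\big)\big[\beta f_A\big(T_{j'}(T_i\fe)\big)+(1-\beta)f_D\big(T_{j'}(T_i\fe)\big)\big].
\end{equation}
This is precisely the functional in \eqref{eqn:deterministic-limit-of-functionals} evaluated at the environment $T_i\fe$ in place of $\fe$ (note that $f_A\circ T_{j'}$ and $f_D\circ T_{j'}$ remain bounded and $\Sigma$-measurable, since each $T_{j'}$ is measurable). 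Theorem~\ref{thm:weak-convergence-of-contin-env} therefore provides, for each $\alpha\in\{0,1\}$, a $\bP$-null set $B_\alpha\in\Sigma$ such that for every environment $\fe'\notin B_\alpha$ the expression in \eqref{eqn:deterministic-limit-of-functionals}, read with $\fe'$ in place of $\fe$, converges to $\theta$ as $t\to\infty$; moreover $\theta$, given by the annealed average \eqref{eqn:value-of-homogenised-average}, is a deterministic constant, hence the same for all environments. Applying this with $\fe'=T_i\fe$ shows that \eqref{eqn:any-point-limit-of-functionals} holds whenever $T_i\fe\notin B_\alpha$, i.e.\ whenever $\fe\notin T_i^{-1}B_\alpha$.

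Finally I would assemble the exceptional sets. By translation-invariance of $\bP$ (Assumption~\ref{assump:environment-law}(1)), $\bP\big(T_i^{-1}B_\alpha\big)=\bP(B_\alpha)=0$ for all $i\in\Z^d$ and $\alpha\in\{0,1\}$; and since $G=\Z^d\times\{0,1\}$ is countable, the set $\mathcal N:=\bigcup_{(i,\alpha)\in G}T_i^{-1}B_\alpha$ is $\bP$-null. For every $\fe\notin\mathcal N$, \eqref{eqn:any-point-limit-of-functionals} holds simultaneously for all $(i,\alpha)\in G$, which is the assertion. The only step requiring genuine care is the verification of the covariance identity \eqref{eqn:kernel-covariance} from the rates in \eqref{eqn:trans-rates-of-theta} and the bookkeeping of the directions of the shifts; the remainder is a routine countable union of null sets and invokes no estimate beyond Theorem~\ref{thm:weak-convergence-of-contin-env}.
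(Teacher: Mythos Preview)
Your proof is correct and follows essentially the same route as the paper: both arguments reduce the starting point $(i,\alpha)$ to $(0,\alpha)$ via the translation-covariance identity $p_t^\fe((i,\alpha),(j,\beta))=p_t^{T_i\fe}((0,\alpha),(j-i,\beta))$, and then handle the exceptional environments using the translation-invariance of $\bP$ and countability of $G$. The only cosmetic differences are that the paper obtains the covariance identity by combining the discrete-time Lemma~\ref{lemma:cycle-property-of-law} with the Poisson-clock representation (Lemma~\ref{lemma:poisson-clock-representation}) rather than arguing directly from the rates, and it intersects shifted full-measure sets to form a single translation-invariant set $B_{\mathrm{inv}}$ rather than taking a countable union of null sets---complementary bookkeeping leading to the same conclusion.
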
 

\subsection{Discussion}
\label{sec-discussion}
\paragraph{Clustering in fixed environment.}
In \cite[Theorem~3.14]{HN01} we only showed convergence of the spatial process $\sZ$ to an equilibrium for a restricted class of initial distributions, namely, a product of binomials with parameters that are tuned to the environment $\fe$ and the density of type-$\heartsuit$ individuals in the populations. The main result of Section~\ref{sec-clustering-in-fixed-env}, namely, Theorem~\ref{thm:domain-of-attr-cluster}, fully characterizes the set of initial distributions for which $\sZ$ admits convergence to equilibrium. The result is valid for any admissible environment $\fe$ in which $\sZ$ exhibits clustering. The proof follows from similar arguments used in the proof of the analogous results \cite[Theorem 1.9(b)]{L} and \cite[Theorem 1.2]{Shiga80} derived, respectively, in the context of the Voter model and the Stepping Stone model (see also e.g. \cite{Casanova18,Blath19}). In \cite[Theorem~3.17]{HN01} we showed that the process $\sZ$ clusters if and only if two dual particles in $\sZ_*$ coalesce into a single particle with probability 1. We also show in Theorem~\ref{thm:coalsence-consistency} in Section~\ref{sec-prelims-on-dual} that coalescence of two dual particles with probability 1 is equivalent to coalescence of any finite number of dual particles with probability 1. This consistency property of the dual process, which is purely a consequence of the duality relation between $\sZ$ and $\sZ_*$, is far from trivial, because the dual particles interact with each other.

To summarise, the process $\sZ$ admits \emph{only} mono-type equilibria if and only if the evolution of the dual $\sZ_*$ is eventually governed by $p_t^\fe(\cdot\,,\cdot)$, the probability transition kernel of the single-particle dual $\Theta^\fe$ (recall Definition~\ref{defn:single-particle-dual}). Precisely because of this, we see in \eqref{eqn:limiting-behaviour-of-kernel} that the domain of attraction for each mono-type equilibrium of the process $\sZ$ in the clustering regime is dictated by the limiting behaviour of $p_t^\fe(\cdot\,,\cdot)$ as $t\to\infty$. On the contrary, if the process $\sZ$ is in the coexistence regime (= existence of multi-type equilibria), then the evolution of the dual $\sZ_*$ is no longer described by $p_t^\fe(\cdot\,,\cdot)$ alone, and therefore providing an answer to similar questions in the case of coexistence is challenging. In particular, because of the presence of interactions in the dual and the lack of translation-invariance of the state space $\sX$, the characterization of the domain of attraction for a multi-type equilibrium via Liggett-type conditions (see e.g. \cite[Theorem 1.9(a)]{L},\cite{GHO1}) is a highly non-trivial problem, and is closely related to the study of harmonic functions (see e.g. \cite{Shiga80-1}) of the general dual process $\sZ_*$.
\vspace{-0.2cm}
\paragraph{Clustering in random environment.}
Turning to the main result of Section~\ref{sec-clustering-in-random-env}, we see that Theorem~\ref{thm:random-env-clustering} is a homogenization statement on the convergence of the spatial system to a mono-type equilibrium in random environment. It states that if the population sizes are drawn from an ergodic and translation-invariant random field for which clustering prevails, and the initial average densities of type-$\heartsuit$ active and dormant individuals in each colony are modulated, respectively, by two global functions $f_A(\cdot)$ and $f_D(\cdot)$ of the population sizes, then the spatial system converges in law towards a mono-type equilibrium for almost all initial realizations of the sizes. In the attained equilibrium, the probability of fixation to the all type-$\heartsuit$ configuration is a weighted average of the two functions $f_A$ and $f_D$, and is independent of the chosen initial population sizes. In other words, the spatial process $Z^\fe$ undergoes homogenization, which, roughly speaking, can be viewed as a ``weak law of large numbers''.

A closer look at the proof in Section~\ref{sec-proof-of-main-theorems} will reveal that the homogenization comes, in essence, from the duality relation with the process $\Theta^\fe$ evolving in the same random environment. The homogenization in the continuous-time process $\Theta^\fe$, in turn, is inherited from a discrete-time subordinate Markov chain $\widehat{\Theta}^\fe$ (see Definition~\ref{defn:discrete-coordinate-process} in Section~\ref{sec-subordinate-environment-chain}). This $\widehat{\Theta}^\fe$ is embedded into the continuous-time process $\Theta^\fe$ and closely resembles a $d$-dimensional version of the random walk in random environment (RWRE) on a strip introduced in \cite{Bolt20} (see also \cite{Ilya19,Goldshied19,Uphill22} for similar models and further references). However, results derived in that context do not immediately carry over to our setting, because $\widehat{\Theta}^\fe$ fails to meet some basic irreducibility hypotheses (see e.g. \cite[Condition C]{Bolt20}). Nonetheless, it turns out that $\widehat{\Theta}^\fe$ is easier to analyse than the RWRE on a strip, as some of its transition probabilities are controlled by deterministic parameters that do not depend on the environment $\fe$. To be precise, the step distribution of a particle evolving via $\widehat{\Theta}^\fe$ on the $d$-dimensional strip $\Z^d\times\{0,1\}$ is a preassigned probability distribution $\hat{p}(\cdot)$ on $\Z^d$ and, in fact, is defined in terms of the migration kernel $a(\cdot\,,\cdot)$ of the spatial process $\sZ$. This simplicity of the subordinate Markov chain, which is similar to a property found in for random walk in random scenery (see e.g., \cite{Hollander88,Steif2006}), allows us to answer some of the highly sought-after questions in the literature on RWRE. In particular, we are able to identify a stationary and ergodic distribution for the environment viewed from the particle, with an explicit expression for the density w.r.t.\ the initial law, and establish a strong law of large numbers for the location of the particle (see Section~\ref{sec-stationary-env-weak-conv}). Moreover, when $\hat{p}(\cdot)$ is symmetric and recurrent ($d\leq 2$), we show that the environment process converges weakly to the \emph{reversible} stationary distribution in the \emph{quenched} setting. The latter is a very powerful result, which ultimately causes the homogenization found in the subordinate Markov chain $\widehat{\Theta}^\fe$, and later passes it on to the single-particle dual $\Theta^\fe$ as well. 

As argued before, the spatial process $\sZ$ acquires the homogenization via duality from $\Theta^\fe$. Indeed, a crucial observation will reveal that the homogenized fixation probability in \eqref{eqn:value-of-fixation-probability} is nothing else but the average of the two global functions $f_A$ and $f_D$ w.r.t.\ the invariant distribution of the environment process. The method employed in proving the quenched weak convergence of the environment process for $\widehat{\Theta}^\fe$ to the invariant distribution is not probabilistic and relies on ergodic theoretic tools. To be precise, we first show that the \emph{peripheral point-spectrum} (i.e., the set of all eigenvalues of modulus 1) of the self-adjoint Markov kernel operator $\fR$ associated to the environment process is trivial (see Lemma~\ref{lemma:peripheral-spectrum-of-kernel} in Section~\ref{sec-stationary-env-weak-conv}) and afterwards invoke a generalised version of the fundamental theorem for Markov chains (see Proposition~\ref{prop:fundamental-theorem-of-MC} in Section~\ref{sec-stationary-env-weak-conv}) to establish the convergence. This way of proving weak convergence of the environment process is non-standard in the literature on RWRE, where such convergences are often established by exploiting some form of regeneration structure, or results like a local central limit theorem for the relevant random walk (see e.g., \cite{Kesten77,Lally86,Ilya19,Birkner21}). Admittedly, the analysis of the peripheral point-spectrum of a Markov kernel operator in the $L_p$ ($p\geq 1$) space of its reversible distribution is non-trivial and requires knowledge of the explicit form of the distribution. However, in many random environment models, such as the random conductance model, the one-dimensional RWRE, etc., important results in the quenched setting are still incomplete despite the existing knowledge of their explicit reversible distributions. Perhaps such problems may be approached in a similar way.

\section[Single-particle dual in random environment]{Single-particle dual in random environment}
\label{sec-single-particle-proc-in-random-env}
As indicated in the previous section, the single-particle dual process $\Theta^\fe$ (see Definition~\ref{defn:single-particle-dual}) serves as the main ingredient in proofs of all our main results. In this section we study $\Theta^\fe$ in a typical random environment $\fe\in\cE_\fK$ drawn according to the law $\bP$ (see Assumption~\ref{assump:environment-law}) and prove the homogenization result stated in Theorem~\ref{thm:weak-convergence-of-contin-env}.

To avoid dealing with technicalities that arise in the context of continuous-time Markov processes, in Section~\ref{sec-subordinate-environment-chain} we transform the process $\Theta^\fe$ into a discrete-time Markov chain $\widehat{\Theta}^\fe$ using the well-known method of \emph{uniformization} by a Poisson clock. We also introduce an \emph{auxiliary environment process} $W$ associated to the Markov chain $\widehat{\Theta}^\fe$. In Section~\ref{sec-stationary-env-weak-conv} we show that the environment process $W$ converges weakly to an invariant distribution in the \emph{quenched} setting. Finally, in Section~\ref{sec-homogen-in-cont-time} we prove Theorem~\ref{thm:weak-convergence-of-contin-env} and Corollary~\ref{corollary:limit-from-any-start-point} by transferring the convergence result on $W$ to the continuous-time process $\Theta^\fe$.

\subsection{Subordinate Markov chain and auxiliary environment process}
\label{sec-subordinate-environment-chain}

 When a continuous-time Markov process on a countable state space retains \emph{uniformly bounded} jump rates, it can be uniformized by a Poisson clock and a discrete-time subordinate Markov chain (see e.g., \cite[Chapter 2]{Liggett10}). The method of uniformization essentially transforms a \emph{variable-speed} continuous-time Markov process into a \emph{constant-speed} continuous-time Markov process \cite{Biskup11}. Observe from \eqref{eqn:trans-rates-of-theta} that the jump rates of $\Theta^\fe$ (see Definition~\ref{defn:single-particle-dual}) are uniformly bounded when the chosen environment $\fe$ is uniformly elliptic, and therefore $\Theta^\fe$ is uniformizable for such an environment. We start by defining a subordinate Markov chain $\widehat{\Theta}^\fe$ corresponding to the process $\Theta^\fe$ in a uniformly elliptic environment $\fe$. 
\begin{definition}{\bf [Subordinate Markov chain]}
	\label{defn:discrete-coordinate-process}
	{\rm The subordinate Markov chain (see Fig.~\ref{fig:representation-of-subordinate-chain})
		\begin{equation}
			\widehat{\Theta}^\fe := (\widehat{\Theta}^\fe_n)_{n\in\N_0}, \qquad \widehat{\Theta}^\fe_n = (X^\fe_n,\alpha^\fe_n),
		\end{equation}
		in a uniformly elliptic environment $\fe:=(N_i,M_i)_{i\in\Z^d}\in\cE_\fK$ is the discrete-time Markov chain on the state space 
		$G=\Z^d\times\{0,1\}$ with transition probabilities
		\begin{equation}
			\label{eqn:trans-prob-of-aux-proc}
			\begin{aligned}
				&(i,1)\longrightarrow
				\begin{cases}
					(j,1)&\text{ w.p.\ } (1-q_s)\hat{p}(j-i),\quad j\in\Z^d,\\
					(i,0)&\text{ w.p.\ } q_s,
				\end{cases}\\
				&(i,0)\longrightarrow
				\begin{cases}
					(i,0) &\text{ w.p.\ } 	1-\omega(i),\\
					(i,1)&\text{ w.p.\ } \omega(i),
				\end{cases}
			\end{aligned}
		\end{equation} 
		where $i\in\Z^d$, and the parameters $q_s$, $\omega:=(\omega(k))_{k\in\Z^d}$ and $\hat{p}:=(\hat{p}(k))_{k\in\Z^d}$ are determined by the exchange rate $\lambda$, the environment $\fe$, the migration kernel $a(\cdot\,,\cdot)$, and the ellipticity constant $\fK\geq 2$, as follows:
		\begin{equation}
			\label{eqn:transformation-of-parameters}
			\begin{aligned}
				q_s&:= \frac{\lambda}{c+\lambda+\lambda \fK},\quad \omega(i) := \frac{\lambda K_i}{c+\lambda+\lambda \fK} = \frac{\lambda N_i}{M_i(c+\lambda+\lambda \fK)},\\ \hat{p}(i) &:= \frac{\lambda \fK}{c+\lambda \fK}\mathbf{1}_{\{i=0\}} + \frac{a(0,i)}{c+\lambda \fK}\mathbf{1}_{\{i\neq 0\}},
			\end{aligned}
		\qquad i \in\Z^d,
		\end{equation}
	where $c$ is the speed of migration defined in condition (3) of Assumption~\ref{assumpt1}. We denote by $Q_\fe(\cdot\,,\cdot)\,:G\times G\to[0,1]$ the 1-step transition kernel of the chain $\widehat{\Theta}^\fe$, defined as
	\begin{equation}
		\label{eqn:discrete-transition-kernel}
		Q_\fe(\eta,\xi) := \widehat{P}^\fe_\eta(\widehat{\Theta}^\fe_1=\xi),\quad\eta,\xi\in G,
	\end{equation}
	where $\widehat{P}^\fe_\eta$ is the canonical law of $\widehat{\Theta}^\fe$ started at $\eta$.
	}\hfill $\blacklozenge$
\end{definition}
\begin{figure}[h!t!]
	\begin{center}
		\begin{tikzpicture}[node distance=1.5cm,style={minimum size=5mm},xscale=0.8,yscale=0.8]
			\draw (0,0) rectangle (16,6);
			\pgflowlevelsynccm
			
			\draw[thick] (1.5,1) -- (14.5,1);
			\foreach \x in {2,4,...,14} {\draw[fill,black] (\x,1) circle (2pt);};
			\draw[thick] (1.5,4) -- (14.5,4);
			\foreach \x in {2,4,...,14} {\draw[fill,black] (\x,4) circle (2pt);};
			\foreach \x in {2,4,...,14} {
				\draw[dashed,thick] (\x,1) -- (\x,4);
			};
			\foreach \x in {2,4,...,14} {
				\pgfmathsetmacro\result{\x/2};
				\draw(\x,4) node[above] {$(x_{\pgfmathprintnumber{\result}},0)$};
				\draw(\x,1) node[below] {$(x_{\pgfmathprintnumber{\result}},1)$};
			};
			\draw(1.7,4) node[below] {Dormant layer};
			\draw(1.5,1) node[above] {Active layer};
			\draw (8.2,1.2) edge[->,out=15,in=165,thick] node[xshift=0.6cm,above] {$(1-q_s)\widehat{p}(x_6-x_4)$} (12,1.2);
			\draw (8.2,1.2) edge[->,out=45,in=-45,thick] node[right] {$q_s$} (8.3,3.8);
			\draw (7.8,3.8) edge[->,out=-135,in=135,thick] node[yshift=0.2cm,left] {$\omega(x_4)$} (7.8,1.2);
			\node (t-origin) at (8,4.4) {};
			\draw (t-origin) edge[->,out=30,in=150,thick,loop] node[above] {$1-\omega(x_4)$} (7.8,1.2);
		\end{tikzpicture}
		\caption{\small A schematic representation of the transition probabilities of a particle moving on the $d$-dimensional strip $\Z^d\times\{0,1\}$ according to $\widehat{\Theta}^\fe$. The particle is allowed to migrate in the bottom layer and while doing so remains in \emph{active} state. However, the particle becomes \emph{dormant} by entering the top layer, and thus can not migrate.}
		\label{fig:representation-of-subordinate-chain}
	\end{center}
\end{figure}
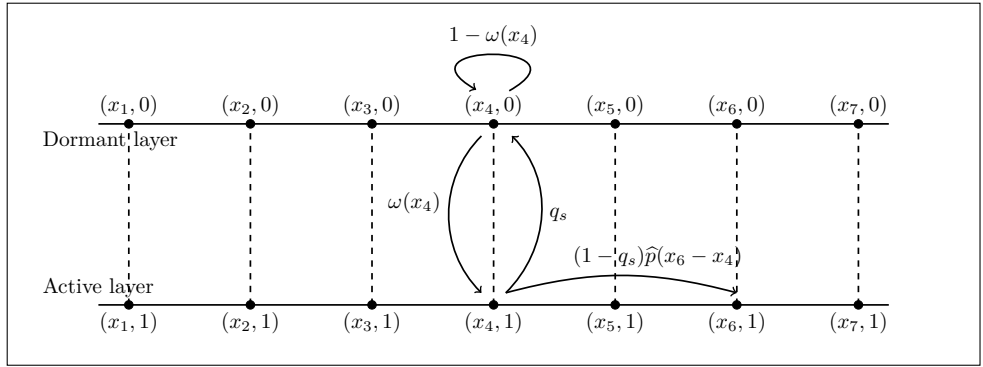
\begin{remark}{\bf[Well-posedness]}
	{Observe that $\hat{p}(\cdot)$ defines a probability distribution on $\Z^d$ and inherits the role of the migration kernel $a(0,\cdot)$. By the uniform ellipticity of the environment $\fe\in\cE_\fK$, it follows that $\omega\in[\delta,1-\delta]^{\Z^d}$ for some $\delta\in(0,\tfrac{1}{2})$ determined by $c,\lambda$ and $\fK$. Thus, the transition probabilities in \eqref{eqn:trans-prob-of-aux-proc} are well-defined. From \eqref{eqn:transformation-of-parameters} we see that $\omega$ is the only parameter that depends on $\fe$ and plays the role of \emph{random} environment for $\widehat{\Theta}^\fe$, while $q_s$ takes over the role of $\lambda$, which is the rate of becoming dormant from the active state in the continuous-time process $\Theta^\fe$.}
\end{remark}
\medskip\noindent
The subordinate Markov chain $\widehat{\Theta}^\fe$ describes the evolution of a particle moving on the $d$-dimensional strip $\Z^d\times\{0,1\}$ in discrete time. The coordinates $X_n^\fe$ and $\alpha_n^\fe$ give, respectively, the location in $\Z^\d$ and the state (active or dormant) at time $n\in\N_0$ of the particle evolving in the environment $\fe$ according to the transition probabilities given in \eqref{eqn:trans-prob-of-aux-proc}. In each step, the particle in the active state, with probability $(1-q_s)$, performs random walk on $\Z^d$ according to the increment distribution $\hat{p}(\cdot)$, while, with probability $q_s$, it becomes dormant from the active state. The particle does not move in the dormant state and becomes active with a location-dependent probability determined by the environment $\fe$. The following property of the law of $\widehat{\Theta}^\fe$ is a consequence of the translation-invariance of $\Z^d$ and the migration kernel $a(\cdot\,,\cdot)$. The proof follows from an easy calculation of the transition probabilities of $\widehat{\Theta}^\fe$ given in \eqref{eqn:trans-prob-of-aux-proc}, and is omitted for briefness.
\begin{lemma}{\bf[Translation-invariance]}
	\label{lemma:cycle-property-of-law}
	For any $(i,\alpha), (j,\beta)\in G$ and $n\in\N_0$,
 	\begin{equation}
		\widehat{P}^\fe_{(0,\alpha)}(\widehat{\Theta}^\fe_n=(j,\beta)) = \widehat{P}^{T_{-i}\fe}_{(i,\alpha)}(\widehat{\Theta}^{T_{-i}\fe}_n= (i+j,\beta)).
	\end{equation}
\end{lemma}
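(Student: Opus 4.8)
The plan is to reduce the identity to a one-step \emph{intertwining} relation between the transition kernel $Q_\fe$ and a spatial shift on $G$, and then lift it to $n$ steps via Chapman--Kolmogorov. For $i\in\Z^d$, introduce the bijection $\tau_i\colon G\to G$ given by $\tau_i(k,\gamma):=(k+i,\gamma)$. The claim I would first establish is
\begin{equation}
	\label{eqn:one-step-intertwining}
	Q_{T_{-i}\fe}(\tau_i\eta,\tau_i\xi)=Q_\fe(\eta,\xi)\qquad\text{for all }\eta,\xi\in G,\ i\in\Z^d.
\end{equation}
Once \eqref{eqn:one-step-intertwining} is in hand, an easy induction on $n$ gives $Q^{(n)}_{T_{-i}\fe}(\tau_i\eta,\tau_i\xi)=Q^{(n)}_\fe(\eta,\xi)$ for the $n$-step kernels, and specializing $\eta=(0,\alpha)$, $\xi=(j,\beta)$ (so that $\tau_i\eta=(i,\alpha)$ and $\tau_i\xi=(i+j,\beta)$) yields the lemma, since $\widehat{P}^\fe_\eta(\widehat{\Theta}^\fe_n=\xi)=Q^{(n)}_\fe(\eta,\xi)$.

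To verify \eqref{eqn:one-step-intertwining} I would inspect the four transition types in \eqref{eqn:trans-prob-of-aux-proc}. The active-layer entries $(k,1)\to(l,1)$ and $(k,1)\to(k,0)$ carry probabilities $(1-q_s)\hat{p}(l-k)$ and $q_s$, both independent of location and of the environment; applying $\tau_i$ to source and target preserves the increment $l-k$, so these entries agree. For the dormant-layer entries $(k,0)\to(k,0)$ and $(k,0)\to(k,1)$ the only environment-dependent quantity is the wake-up probability, which for an environment $\fe'=(N'_m,M'_m)_{m\in\Z^d}$ equals $\lambda N'_k/[M'_k(c+\lambda+\lambda\fK)]$ at site $k$ (recall \eqref{eqn:transformation-of-parameters}). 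With the convention $T_j\fe=(N_{m+j},M_{m+j})_{m\in\Z^d}$, the sizes of $T_{-i}\fe$ at site $k+i$ are $(N_k,M_k)$; hence the wake-up probability of $T_{-i}\fe$ at $k+i$ coincides with that of $\fe$ at $k$, which gives the remaining two entries of \eqref{eqn:one-step-intertwining}.

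For the induction: at $n=0$ both sides equal $\mathbf{1}_{\{\eta=\xi\}}$ because $\tau_i$ is injective. For the step from $n$ to $n+1$, write $Q^{(n+1)}_{T_{-i}\fe}(\tau_i\eta,\tau_i\xi)=\sum_{\zeta\in G}Q_{T_{-i}\fe}(\tau_i\eta,\zeta)\,Q^{(n)}_{T_{-i}\fe}(\zeta,\tau_i\xi)$, substitute $\zeta=\tau_i\zeta'$ (valid since $\tau_i$ is a bijection of $G$), and apply \eqref{eqn:one-step-intertwining} together with the induction hypothesis to obtain $\sum_{\zeta'\in G}Q_\fe(\eta,\zeta')\,Q^{(n)}_\fe(\zeta',\xi)=Q^{(n+1)}_\fe(\eta,\xi)$. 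I do not expect any real obstacle here: the entire content is the cancellation in \eqref{eqn:one-step-intertwining} between shifting the environment by $T_{-i}$ and shifting the base colony by $+i$, and the only thing requiring care is keeping the (easily-confused) shift conventions consistent throughout.
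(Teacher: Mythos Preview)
Your proof is correct and is essentially the approach the paper has in mind: the paper does not give a proof at all, stating only that it ``follows from an easy calculation of the transition probabilities of $\widehat{\Theta}^\fe$ given in \eqref{eqn:trans-prob-of-aux-proc}, and is omitted for briefness.'' Your intertwining relation \eqref{eqn:one-step-intertwining} is exactly that calculation made explicit, and the induction via Chapman--Kolmogorov is the obvious way to lift it to $n$ steps.
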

The connection between the discrete-time Markov chain $\widehat{\Theta}^\fe$ and the continuous-time Markov process $\Theta^\fe$ becomes apparent in the next lemma.
\begin{lemma}{\bf [Uniformization by Poisson clock]}
	\label{lemma:poisson-clock-representation}
	Let $\fe\in\cE_\fK$ be a uniformly elliptic environment and $(N_t)_{t\geq 0}$ be a Poisson process with rate $c+\lambda+\lambda \fK$ that is independent of the subordinate Markov chain $\widehat{\Theta}^\fe$. Then, under the assumption that the process $\Theta^\fe$ (see Definition~{\rm \ref{defn:single-particle-dual}}) and the Markov chain $\widehat{\Theta}^\fe$ have the same initial distribution,
	\begin{equation}
		(\Theta^\fe(t))_{t\geq 0} \overset{d}{=}(\widehat{\Theta}^\fe_{N_t})_{t\geq 0}.
	\end{equation}
In particular, for $\eta,\xi\in G$,
\begin{equation}
	p_t^\fe(\eta,\xi) = \e^{-(c+\lambda+\lambda \fK)t}\sum_{n=0}^\infty \tfrac{[(c+\lambda+\lambda \fK)t]^n}{n!}Q^n_\fe(\eta,\xi) ,
\end{equation}
where $p_t^\fe(\cdot\,,\cdot)$ and $Q_\fe(\cdot\,,\cdot)$ are as in Definition~{\rm \ref{defn:single-particle-dual}} and Definition~{\rm\ref{defn:discrete-coordinate-process}}, respectively.
\end{lemma}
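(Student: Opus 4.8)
The plan is to verify that $(\widehat{\Theta}^\fe_{N_t})_{t\geq 0}$ is a continuous-time Markov chain on $G$ whose generator coincides with that of $\Theta^\fe$, and then invoke uniqueness of the law of a Markov process given its generator (valid here since $G$ is countable and the jump rates of $\Theta^\fe$ are uniformly bounded on $\cE_\fK$). First I would recall the standard uniformization setup: set $r := c+\lambda+\lambda\fK$, let $(N_t)_{t\geq 0}$ be a rate-$r$ Poisson process independent of $\widehat{\Theta}^\fe$, and define the subordinated process $\Xi_t := \widehat{\Theta}^\fe_{N_t}$. Conditioning on $N_t = n$, which has probability $\e^{-rt}(rt)^n/n!$, and using the Markov property of $\widehat{\Theta}^\fe$ together with the independence of $(N_t)$ from $\widehat{\Theta}^\fe$, one gets immediately
\begin{equation}
	\label{eqn:poisson-expansion-proof}
	\widehat{P}_\eta\big(\Xi_t = \xi\big) = \sum_{n=0}^\infty \e^{-rt}\tfrac{(rt)^n}{n!}\, Q_\fe^n(\eta,\xi),\qquad \eta,\xi\in G,
\end{equation}
which is the displayed formula once we identify $r = c+\lambda+\lambda\fK$; the Markov property of $\Xi$ follows from the strong Markov property of the pair $(\widehat{\Theta}^\fe, N)$ and the stationary independent increments of the Poisson process. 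So the content of the lemma is entirely in matching generators.

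Next I would compute the generator $L^\Xi$ of $\Xi$ by differentiating \eqref{eqn:poisson-expansion-proof} at $t=0$, which gives $L^\Xi = r(Q_\fe - I)$. It then remains to check that $r(Q_\fe(\eta,\xi) - \mathbf{1}_{\{\eta=\xi\}})$ equals the rate of the transition $\eta\to\xi$ prescribed in \eqref{eqn:trans-rates-of-theta} for $\Theta^\fe$, for every pair $\eta\neq\xi$ (the diagonal then matches automatically since both are conservative generators). This is a short case check using \eqref{eqn:transformation-of-parameters}. For an active-to-dormant move $(i,1)\to(i,0)$: $r\,Q_\fe = r\, q_s = \lambda = \lambda[\alpha + (1-\alpha)K_i]$ with $\alpha=1$, as required. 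For a dormant-to-active move $(i,0)\to(i,1)$: $r\,Q_\fe = r\,\omega(i) = \lambda K_i = \lambda[\alpha+(1-\alpha)K_i]$ with $\alpha=0$. For a migration move $(i,1)\to(j,1)$ with $j\neq i$: $r\,Q_\fe = r(1-q_s)\hat p(j-i)$, and since $r(1-q_s) = c+\lambda\fK$ while $\hat p(j-i) = a(0,j-i)/(c+\lambda\fK)$ for $j\neq i$, this is exactly $a(0,j-i)$; and migration from a dormant state has probability $0$ under $Q_\fe$, matching the factor $\alpha$ in the rate. All other off-diagonal entries of $Q_\fe$ vanish (note the $j=i$ contribution $\hat p(0)$ in the active self-loop $(i,1)\to(i,1)$ is diagonal, hence irrelevant to the comparison), matching the fact that $\Theta^\fe$ has no other transitions. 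Hence $L^\Xi$ and the generator of $\Theta^\fe$ agree.

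Finally I would conclude: since $\Theta^\fe$ is a well-defined Feller (indeed uniformly bounded jump-rate) Markov chain on the countable set $G$, its law is uniquely determined by its generator, so $\Theta^\fe$ started from a given distribution has the same finite-dimensional distributions as $\Xi$ started from the same distribution; by Kolmogorov consistency this gives the claimed equality in law of the two processes, and in particular the transition-kernel identity $p_t^\fe = \sum_n \e^{-rt}(rt)^n/n!\,Q_\fe^n$. I do not expect any real obstacle here: the only point requiring a little care is justifying the generator-uniqueness step, i.e.\ that there is no explosion and the minimal process is the only one — but this is immediate because the total jump rate out of any state of $\Theta^\fe$ is bounded by $r = c+\lambda+\lambda\fK < \infty$ uniformly over $\cE_\fK$, so $\Theta^\fe$ is non-explosive and its law is pinned down by \eqref{eqn:trans-rates-of-theta}. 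The termwise differentiation of the series \eqref{eqn:poisson-expansion-proof} at $t=0$ is also harmless since the series converges uniformly on compact time intervals (dominated by $\e^{rt}$ in operator norm).
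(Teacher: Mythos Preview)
Your proof is correct and follows essentially the same route as the paper: both arguments hinge on the identity $\mathcal{J}_\fe = r(Q_\fe - I)$ with $r=c+\lambda+\lambda\fK$, from which the Poisson-mixture formula for $p_t^\fe$ is immediate via $\e^{t\mathcal{J}_\fe}=\e^{-rt}\e^{rtQ_\fe}$. The paper simply asserts this generator identity (noting that translation-invariance of $a(\cdot,\cdot)$ is used), whereas you carry out the off-diagonal case check explicitly and add the non-explosion remark; this extra detail is fine but not a different strategy.
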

\begin{proof}
	Let $\mathcal{J}_\fe$ denote the infinitesimal generator of the process $\Theta^\fe$. Since $\fe$ is uniformly elliptic, it is easily seen that $\mathcal{J}_\fe$ is a bounded operator and thus $(\exp\{\mathcal{J}_\fe t\})_{t\geq 0}$ defines the semigroup of $\Theta^\fe$. In particular, the transition probability kernel $p_t^\fe(\cdot\,,\cdot)$ expands as
	\begin{equation}
		p_t^\fe(\cdot\,,\cdot) = \sum_{n=0}^\infty \mathcal{J}_\fe^n(\cdot\,,\cdot) \tfrac{t^n}{n!},
	\end{equation}
where the generator $\mathcal{J}_\fe$ is viewed as a matrix. The claim follows from this expansion of $p_t^\fe(\cdot\,,\cdot)$ and the observation that
\begin{equation}
	\label{eqn:generator-relation}
	\mathcal{J}_\fe = (c+\lambda+\lambda \fK)[Q_\fe-I],
\end{equation}
where $I$ is the identity operator (viewed as a matrix). Note that in \eqref{eqn:generator-relation} the translation-invariance of the migration kernel $a(\cdot\,,\cdot)$ is used.
\end{proof}
Below we define the ``environment process'' associated to the subordinate Markov chain $\widehat{\Theta}^\fe$. This process is defined in the same way as for RWRE on a strip (see e.g., \cite[Definition 2.2]{Goldshied19}).

\begin{definition}{\bf [Auxiliary environment process]}
	\label{defn:environment-chain}
	{\rm Let $\widehat{\Theta}^\fe=(X_n^\fe,\alpha_n^\fe)_{n\in\N_0}$
	 with the canonical law $\widehat{P}^\fe_{(0,\alpha)}$ be the subordinate Markov chain (see Definition~\ref{defn:discrete-coordinate-process}) started at $(0,\alpha)\in G$ in environment $\fe\in\cE_\fK$. The auxiliary environment process $W$ having initial distribution $\delta_{(\fe,\alpha)}$ is the discrete-time process on $\Omega_\fK:=\cE_\fK\times\{0,1\}$ given by
	\begin{equation}
		W := (W_n)_{n\in\N_0},\quad W_n := (\fe_n,\alpha_n) \text{ with } \fe_n := T_{X^\fe_n}\fe,\ \alpha_n:=\alpha_n^\fe,
	\end{equation}
	and is defined on the same probability space of $\widehat{\Theta}^\fe$.
	}\hfill$\blacklozenge$
\end{definition}

\medskip\noindent
It is trivial to check that, for any $(\fe,\alpha)\in\Omega_\fK$, 
$W$ is a Markov chain on the state space $\Omega_\fK$
under the law $\widehat{P}^\fe_{(0,\alpha)}$, with initial distribution $\delta_{(\fe,\alpha)}$ [by Lemma~\ref{lemma:cycle-property-of-law}, also under the law $\widehat{P}^\fe_{(i,\alpha)}$, $i\in\Z^d$, with initial distribution $\delta_{(T_i\fe,\alpha)}$].

The action of the Markov kernel operator $\fR$ associated to $W$ on a bounded function $f\in\mathcal{F}_b(\Omega_\fK)$ is given by
\begin{equation}
\fR f(\fe,\alpha):=\widehat{E}^\fe_{(0,\alpha)}[f(W_1)] = \sum_{(j,\beta)\in G}Q_\fe((0,\alpha),(j,\beta))f(T_j\fe,\beta),
\end{equation}
where $(\fe,\alpha)\in\Omega_\fK$ and $Q_\fe(\cdot\,,\cdot)$ is the 1-step transition kernel of $\widehat{\Theta}^\fe$ defined in \eqref{eqn:discrete-transition-kernel}. In particular,
\begin{equation}
	\label{eqn:transition-kernel-environment-chain}
	\begin{aligned}
		\fR f(\fe,\alpha)=
		\begin{cases}
			q_s\,f(\fe,0)+(1-q_s)\displaystyle\sum_{j\in\Z^d}\hat{p}(j)f(T_j\fe,1), &\text{ if } \alpha=1,\\
			\omega(0)f(\fe,1)+[1-\omega(0)]f(\fe,0),&\text{ otherwise,}
		\end{cases}
	\end{aligned}
\end{equation}
where $q_s$, $\hat{p}(\cdot)$ and  $\omega:=(\omega(k))_{k\in\Z^d}$ are defined in terms of $\fe$ and the other parameters in \eqref{eqn:transformation-of-parameters}.

The Markov chain $W$ describes the state of the environment from the point of view of a particle that moves  on the $d$-dimensional strip $\Z^d\times\{0,1\}$ according to the chain $\widehat{\Theta}^\fe$. The definition of the process differs from the standard definition usually encountered in the literature on RWRE. This is because the particle moves on two copies of $\Z^d$ instead of one, and in order to preserve the Markov property we need an extra variable describing the layer on which the particle is present.

The state space $\Omega_\fK$ of the auxiliary environment process $W$, even though compact, is huge. Thus, at first glance, obtaining any useful information from $W$ might seem to be an impossible task. In general, this difficulty is overcome by taking initial samples of the environment from an ergodic and translation-invariant law. In such settings, it often becomes possible to construct ``by hand'' an invariant distribution that is absolutely continuous w.r.t.\ the initial law. Invariant distributions having such characteristics, which guarantees its uniqueness as well (see e.g. \cite{Sznitman02,Kozlov85}), are an extremely powerful tool for deriving many interesting properties, such as laws of large numbers, central limit theorems etc.,\ for the relevant process. In the next section we find an invariant distribution $\Q$ with such a property and prove weak convergence of $W$ to the invariant distribution in the \emph{quenched} setting.

\subsection[Stationary environment process and weak convergence]{Stationary environment process and weak convergence}
\label{sec-stationary-env-weak-conv}
In this section we address the question of whether the auxiliary environment process $W$ admits an invariant distribution that is ``equivalent'' to its initial distribution. The following result provides a positive answer:

\begin{theorem}{\bf[Invariant distribution of environment process]}
	\label{thm:invariant-distribution-of-W}
	Let $\Q$ be the probability measure on $(\Omega_\fK,\Sigma\otimes 2^{\{0,1\}})$ defined by
	\begin{equation}
		\label{eqn:defn-of-invariant-dist}
		\d\Q\{(\fe,\alpha)\} := \frac{u(\fe,\alpha)}{1+\rho}\,\d\bP\{\fe\},
	\end{equation}
where the law $\bP$ defined on $(\cE_\fK,\Sigma)$ is as in Assumption~{\rm \ref{assump:environment-law}}, $\rho:=\bar{\E}\big[\tfrac{M_0}{N_0}\big]$, and the density $u\,:\Omega_\fK\to(0,\fK]$ is given by
\begin{equation}
	u((N_k,M_k)_{k\in\Z^d},\alpha)=\begin{cases}
		1 &\text{ if } \alpha=1,\\
		\tfrac{M_0}{N_0} &\text{ if } \alpha=0.
	\end{cases}
\end{equation}
The following hold:
\begin{enumerate}[{\rm(1)}]
	\item The environment process $W$ in Definition~{\rm \ref{defn:environment-chain}} is stationary and ergodic under the probability law $\Q$.
	\item Under condition {\rm(1)} in Assumption~{\rm\ref{assumpt:clustering-env}}, $\Q$ is reversible.
\end{enumerate}
\end{theorem}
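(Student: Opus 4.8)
\medskip\noindent
\textbf{Proof strategy.}
The plan is to treat the three assertions — stationarity, ergodicity, and (under symmetry) reversibility — separately, each of them reducing to elementary manipulations of the transition rule \eqref{eqn:transition-kernel-environment-chain}, once one notices the single algebraic identity
\begin{equation}
	\label{eqn:key-identity-plan}
	\omega(0)\,\tfrac{M_0}{N_0} = q_s ,
\end{equation}
which is immediate from \eqref{eqn:transformation-of-parameters} and is precisely the reason the density $u$ is taken to be $M_0/N_0$ on the dormant layer. For \emph{stationarity} I would verify $\int_{\Omega_\fK}\fR f\,\d\Q = \int_{\Omega_\fK} f\,\d\Q$ for every bounded $f$: writing $\int \fR f\,\d\Q$ as the $(1+\rho)^{-1}$-normalised $\bP$-average of $\fR f(\cdot,1)+\tfrac{M_0}{N_0}\fR f(\cdot,0)$, the active part contributes $q_s\int f(\cdot,0)\,\d\bP+(1-q_s)\int f(\cdot,1)\,\d\bP$ after using $\sum_j\hat p(j)=1$ and the $T_j$-invariance of $\bP$, while the dormant part contributes $q_s\int f(\cdot,1)\,\d\bP+\int(\tfrac{M_0}{N_0}-q_s)f(\cdot,0)\,\d\bP$ after substituting \eqref{eqn:key-identity-plan}; the two $q_s\int f(\cdot,0)\,\d\bP$ terms cancel and what remains is exactly $\int f(\cdot,1)\,\d\bP+\int\tfrac{M_0}{N_0}f(\cdot,0)\,\d\bP=(1+\rho)\int f\,\d\Q$.

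For \emph{ergodicity}, observe first that $0<u\le\fK$, so $\Q$ is mutually absolutely continuous with $\bP\otimes(\text{counting on }\{0,1\})$; it therefore suffices to show that every $h\in L^\infty(\Omega_\fK)$ with $\fR h=h$ is $\bP$-a.s.\ constant, since for a stationary Markov chain triviality of bounded harmonic functions is equivalent to ergodicity of the associated stationary process (standard: if $\fR h=h$ then $(h(W_n))_n$ is an $L^2(\Q)$-bounded martingale with $\E_\Q[(h(W_{n+1})-h(W_n))^2]=0$, hence $\Q$-a.s.\ constant, and conversely a nontrivial shift-invariant event yields a nonconstant bounded harmonic function). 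If $\fR h=h$, the dormant line of \eqref{eqn:transition-kernel-environment-chain} gives $\omega(0)\,[h(\fe,1)-h(\fe,0)]=0$ for $\bP$-a.e.\ $\fe$; since $\omega(0)$ is bounded away from $0$ by uniform ellipticity, $h(\fe,0)=h(\fe,1)=:\phi(\fe)$ $\bP$-a.s., and the active line then collapses to $\phi(\fe)=\sum_{j}\hat p(j)\,\phi(T_j\fe)$ $\bP$-a.s., i.e.\ $\phi$ is harmonic for the translation random walk on $(\cE_\fK,\bP)$ with step law $\hat p$. That operator is an $L^2(\bP)$-contraction (Jensen plus $T_j$-invariance of $\bP$), so equality in the estimate $\|\phi\|^2=\sum_j\hat p(j)\langle\phi,\phi\circ T_j\rangle\le\sum_j\hat p(j)\|\phi\|^2$ forces, via the Cauchy--Schwarz equality case, $\phi\circ T_j=\phi$ $\bP$-a.s.\ for every $j\in\operatorname{supp}\hat p$; since $\{j:\phi\circ T_j=\phi\}$ is a subgroup of $\Z^d$ containing $\operatorname{supp}\hat p$ and the group generated by $\operatorname{supp}\hat p$ is all of $\Z^d$ by irreducibility of $a(\cdot\,,\cdot)$ (Assumption~\ref{assumpt1}), $\phi$ is $\{T_j\}$-invariant, hence $\bP$-a.s.\ constant by ergodicity of $\bP$ (Assumption~\ref{assump:environment-law}).

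For \emph{reversibility} under condition (1) of Assumption~\ref{assumpt:clustering-env}, the symmetry of $a(\cdot\,,\cdot)$ makes $\hat p$ symmetric, and I would check directly that $\fR$ is self-adjoint on $L^2(\Q)$, i.e.\ $\int\fR f\cdot g\,\d\Q=\int f\cdot\fR g\,\d\Q$ for bounded $f,g$. Expanding $\int\fR f\cdot g\,\d\Q$ along the two layers exactly as in the stationarity computation and using \eqref{eqn:key-identity-plan}, one gets the dormant diagonal term $\int(\tfrac{M_0}{N_0}-q_s)f(\cdot,0)g(\cdot,0)\,\d\bP$ and the two active--dormant cross terms $q_s\int f(\cdot,0)g(\cdot,1)\,\d\bP+q_s\int f(\cdot,1)g(\cdot,0)\,\d\bP$, all manifestly symmetric under $f\leftrightarrow g$; the sole remaining term is the migration term $(1-q_s)\int\sum_j\hat p(j)f(T_j\fe,1)g(\fe,1)\,\d\bP(\fe)$, which after the change of variables $\fe\mapsto T_{-j}\fe$ (legitimate by $T_j$-invariance of $\bP$) and the relabelling $j\mapsto -j$ equals $(1-q_s)\int\sum_j\hat p(-j)f(\fe,1)g(T_j\fe,1)\,\d\bP(\fe)$, and this coincides with the corresponding term for $(g,f)$ precisely because $\hat p(-j)=\hat p(j)$. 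Hence $\fR=\fR^{*}$ on $L^2(\Q)$, which is the claimed reversibility (and re-proves stationarity via $\fR^{*}\mathbf 1=\mathbf 1$).

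I expect the only genuinely delicate point to be the ergodicity step: namely the reduction "bounded $\fR$-harmonic $\Rightarrow$ $\{T_j\}$-invariant on $\cE_\fK$", together with the passage from triviality of bounded harmonic functions to ergodicity of the stationary chain. This is where the special structure of $\widehat\Theta^\fe$ is really used — dormant steps carry no spatial displacement (forcing $h$ to be layer-independent), active steps are driven by the environment-free kernel $\hat p$ (reducing to a walk-harmonic function), and irreducibility of $a$ propagates $T_j$-invariance to the whole group. By contrast, once \eqref{eqn:key-identity-plan} is in hand, the stationarity and reversibility computations are routine bookkeeping of a handful of integrals.
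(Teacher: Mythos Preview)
Your proposal is correct, and for stationarity and reversibility it is essentially the paper's argument verbatim: the paper also expands $\int\fR f\,\d\Q$ (resp.\ $\int g\,\fR f\,\d\Q$) along the two layers, uses the identity $\tfrac{M_0}{N_0}=\tfrac{q_s}{\omega(0)}$ (your \eqref{eqn:key-identity-plan}) together with $T_j$-invariance of $\bP$ and $\sum_j\hat p(j)=1$, and for reversibility isolates the migration term and flips it via $\fe\mapsto T_{-j}\fe$, $j\mapsto -j$, $\hat p(-j)=\hat p(j)$.

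The ergodicity argument is where you genuinely diverge. The paper works with indicators: given $A$ with $\fR\mathbf 1_A=\mathbf 1_A$, it first reduces (as you do) to a $\hat p$-harmonic function on $\cE_\fK$, namely $\mathbf 1_{A_1}$ where $A_1=\{\fe:(\fe,1)\in A\}$, but then it freezes $\fe$ in a translation-invariant full-measure set and observes that the \emph{pointwise} map $i\mapsto\mathbf 1_{A_1}(T_i\fe)$ on $\Z^d$ is bounded and $\hat p$-harmonic, hence constant by the Choquet--Deny theorem; this makes $A_1$ translation-invariant, and ergodicity of $\bP$ finishes. You instead work globally in $L^2(\bP)$: from $\phi=\sum_j\hat p(j)\,\phi\circ T_j$ you extract $\phi\circ T_j=\phi$ for $j\in\operatorname{supp}\hat p$ via the equality case of Cauchy--Schwarz, then propagate to all of $\Z^d$ by the subgroup argument. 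Both routes are valid; yours is arguably more self-contained (no appeal to Choquet--Deny, no need to pass to a translation-invariant version of the a.s.\ equality before fixing $\fe$), while the paper's route makes the role of irreducibility of $a(\cdot,\cdot)$ explicit through a classical named theorem rather than an ad hoc $L^2$ estimate.
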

\begin{remark}{\bf [Validity in all dimensions]}
	\label{rem:validity-in-all-dimension}
{Part (1) of Theorem~\ref{thm:invariant-distribution-of-W} holds without the imposition of condition (1) in Assumption~\ref{assumpt:clustering-env}. It essentially follows from the translation-invariance and ergodicity of the law $\bP$. Moreover, both part (1) and part (2) are valid in all dimensions $d\geq 1$. Assumption~\ref{assumpt1} is crucial for the proof and can not be removed in a straightforward way.
}
\end{remark}

The proof of Theorem~\ref{thm:invariant-distribution-of-W} is mostly computational and is deferred to Appendix~\ref{sec-stationarity-n-large-numbers}. As an application of this result, in Appendix~\ref{sec-stationarity-n-large-numbers} we also give a proof of strong law of large numbers for the subordinate Markov chain $\widehat{\Theta}^\fe$ (recall Definition~\ref{defn:discrete-coordinate-process}), which is a result of independent interest.

Before we proceed further, let us explain what we mean by ``equivalence'' of the invariant distribution $\Q$ in the theorem and the initial law $\bP$ of the environment. In the literature on RWRE, this phenomenon is called ``equivalence between the static and the dynamic points of view''.
\begin{lemma}{\bf [Equivalence of $\Q$ and $\bP$]}
	\label{lemma:equivalence-of-pi-P}
	Let $\Q,\bP$ be as in Theorem~{\rm \ref{thm:invariant-distribution-of-W}}. Then, for any measurable $A \subseteq\Omega_\fK = \cE_\fK\times\{0,1\}$, the following are equivalent:
	\begin{itemize}
		\item[{\rm(1)}] $\Q(A) = 1$.
		\item[{\rm(2)}] There exists a $\Sigma$-measurable $A^\prime\subseteq \cE_\fK$ such that $\bP(A^\prime)=1$ and $A^\prime\times\{0,1\}\subseteq A$.
	\end{itemize}
\end{lemma}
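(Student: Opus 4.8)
The plan is to prove the equivalence directly from the structure of $\Q$ in \eqref{eqn:defn-of-invariant-dist}, using that the density $u$ is strictly positive and bounded. The implication (2)$\Rightarrow$(1) is essentially immediate: if $A^\prime\times\{0,1\}\subseteq A$ with $\bP(A^\prime)=1$, then $\Q(A)\geq\Q(A^\prime\times\{0,1\})$, and since $\d\Q\{(\fe,\alpha)\} = \tfrac{u(\fe,\alpha)}{1+\rho}\d\bP\{\fe\}$ with $u(\fe,1)=1$, $u(\fe,0)=\tfrac{M_0}{N_0}$, we compute $\Q(A^\prime\times\{0,1\}) = \tfrac{1}{1+\rho}\int_{A^\prime}(1+\tfrac{M_0}{N_0})\,\d\bP\{\fe\} = \tfrac{1}{1+\rho}(1+\rho) = 1$, using $\bP(A^\prime)=1$ and the definition of $\rho$. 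Hence $\Q(A)=1$.

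For the harder direction (1)$\Rightarrow$(2), suppose $\Q(A)=1$, equivalently $\Q(A^c)=0$ where $A^c := \Omega_\fK\setminus A$. Write $A^c$ as the disjoint union of its two sections $A^c_1\times\{1\}$ and $A^c_0\times\{0\}$, where $A^c_\alpha := \{\fe\in\cE_\fK : (\fe,\alpha)\in A^c\}$ is $\Sigma$-measurable (measurability of sections of a product-measurable set). Then $0 = \Q(A^c) = \tfrac{1}{1+\rho}\big[\int_{A^c_1}\,\d\bP + \int_{A^c_0}\tfrac{M_0}{N_0}\,\d\bP\big]$. Both integrands are nonnegative, so each integral vanishes. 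From $\int_{A^c_1}\d\bP = 0$ we get $\bP(A^c_1)=0$. For the other term, since $N_0,M_0\geq 2$ on $\cE_\fK$ (indeed $\tfrac{M_0}{N_0}\geq \tfrac{1}{\fK}>0$ everywhere by uniform ellipticity), $\int_{A^c_0}\tfrac{M_0}{N_0}\,\d\bP = 0$ forces $\bP(A^c_0)=0$ as well. Now set $A^\prime := (A^c_0\cup A^c_1)^c = \cE_\fK\setminus(A^c_0\cup A^c_1)$; this is $\Sigma$-measurable with $\bP(A^\prime) = 1$, and by construction, for every $\fe\in A^\prime$ neither $(\fe,0)$ nor $(\fe,1)$ lies in $A^c$, i.e.\ $\{\fe\}\times\{0,1\}\subseteq A$. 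Thus $A^\prime\times\{0,1\}\subseteq A$, establishing (2).

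The only point requiring any care is the use of uniform ellipticity to conclude that the weight $\tfrac{M_0}{N_0}$ is bounded away from $0$, so that a null integral against this weight genuinely yields a $\bP$-null set; this is where Definition~\ref{defn:elliptic-environment} (and hence the restriction to $\cE_\fK$) enters. Everything else is a routine application of Fubini/Tonelli and the measurability of sections of product-measurable sets, so I do not anticipate a genuine obstacle.
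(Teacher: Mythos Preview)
Your proof is correct and follows essentially the same approach as the paper: both arguments decompose $A$ (or its complement) into its two $\Sigma$-measurable sections over $\{0,1\}$, use the explicit form $\d\Q = \tfrac{u(\fe,\alpha)}{1+\rho}\,\d\bP$ with strictly positive density, and take $A' = A_0\cap A_1$ (your $(A^c_0\cup A^c_1)^c$ is exactly this set). The only cosmetic difference is that the paper packages the dormant-layer weight into an auxiliary probability measure $\mu$ equivalent to $\bP$, whereas you argue directly with the integral $\int_{A^c_0}\tfrac{M_0}{N_0}\,\d\bP$ and the uniform lower bound on $\tfrac{M_0}{N_0}$; the content is identical.
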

\begin{proof}
	Let $\theta := \tfrac{1}{1+\bar{\E}[M_0/N_0]}\in(0,1)$, and let $\mu$ be the probability measure on $(\cE_\fK,\Sigma)$ defined by
		\begin{equation}
			\label{eqn:defn-of-mu}
			\mu(E) = \tfrac{\theta}{1-\theta}\int_{E}\tfrac{M_0}{N_0}\,\d\bP\{(N_k,M_k)_{k\in\Z^d}\},\quad E\in\Sigma.
		\end{equation}
	Clearly, for any $E\in\Sigma$,
	 \begin{equation}
	 	\label{eqn:mu-equivalency}
	 	\mu(E)=1 \text{ if and only if } \bP(E)=1.
	 \end{equation}
 Suppose that (1) holds for some measurable $A\subseteq\Omega_\fK$. Note from \eqref{eqn:defn-of-invariant-dist} that
\begin{equation}
	1=\Q(A) = \theta\,\bP(A_1) +(1-\theta)\mu(A_0),
\end{equation}
where
\begin{equation}
	A_0:=\{\fe\,:\,(\fe,0)\in A\},\quad A_1:=\{\fe\,:\,(\fe,1)\in A\}.
\end{equation}
Since $\theta\in(0,1)$, this implies $\bP(A_1)=\mu(A_0)=1$ . Defining $A^\prime = A_0\cap A_1$, we see that (2) follows from \eqref{eqn:mu-equivalency}.

Similarly, if (2) holds, then by \eqref{eqn:mu-equivalency}, $\Q(A^\prime\times\{0,1\})=\theta\,\bP(A^\prime) +(1-\theta)\mu(A^\prime)=1$. Thus, $\Q(A)\geq \Q(A^\prime\times\{0,1\})=1$ and so (1) is proved.
\end{proof}

Our next goal is to prove weak convergence of the environment process $W$ to the stationary law $\Q$ under the quenched law $\widehat{P}_{(0,\alpha)}^\fe$ for $\bP$-almost every realization of the environment $\fe\in\cE_\fK$. In particular, we have the following result:
\begin{theorem}{\bf [Weak convergence of auxiliary environment]}
	\label{thm:weak-convergence-of-discrete-env}
	Suppose that conditions {\rm (1)--(2)} in Assumption~{\rm \ref{assumpt:clustering-env}} hold. Let $f_A\,:\cE_\fK\to\R$ and $f_D\,:\cE_\fK\to\R$ be two bounded $\Sigma$-measurable functions. Then, for $\bP$-almost every realization of $\fe\in\cE_\fK$ and any $\alpha\in\{0,1\}$,
	\begin{equation}
		\label{eqn:weak-convergence-of-expectation}
		\lim\limits_{n\to\infty}\widehat{E}_{(0,\alpha)}^\fe[h(\fe_n,\alpha_n)] = \int_{\cE_\fK\times\{0,1\}} h(\fe^\prime,\beta)\,\d\Q(\fe^\prime,\beta),
	\end{equation}
where $h$ is the function $(\fe,\alpha)\mapsto \alpha f_A(\fe)+(1-\alpha)f_D(\fe)$, $W=(\fe_n,\alpha_n)_{n\in\N_0}$ is the auxiliary environment process with law $\widehat{P}_{(0,\alpha)}^\fe$ defined in Definition~{\rm\ref{defn:environment-chain}}, and $\Q$ is the stationary law of $W$ given in \eqref{eqn:defn-of-invariant-dist}.
\end{theorem}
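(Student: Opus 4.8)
The first step is to recognize the left-hand side of \eqref{eqn:weak-convergence-of-expectation} as $(\fR^n h)(\fe,\alpha)$, where $\fR$ is the Markov kernel operator of the auxiliary environment process $W$ displayed in \eqref{eqn:transition-kernel-environment-chain} and $h\colon\Omega_\fK\to\R$, $h(\fe,\alpha):=\alpha f_A(\fe)+(1-\alpha)f_D(\fe)$, is bounded and $\Sigma\otimes 2^{\{0,1\}}$-measurable. By Lemma~\ref{lemma:equivalence-of-pi-P}, proving \eqref{eqn:weak-convergence-of-expectation} for $\bP$-a.s.\ $\fe$ and both $\alpha\in\{0,1\}$ is the same as proving that $\fR^n h\to\int h\,\d\Q$ pointwise $\Q$-a.e. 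The plan is to extract this from a spectral analysis of $\fR$ on the Hilbert space $L^2(\Q)$, combined with the generalized fundamental theorem for Markov chains (Proposition~\ref{prop:fundamental-theorem-of-MC}).

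Since $\Omega_\fK$ is compact Polish and $\Sigma$ is countably generated, $L^2(\Q)$ is separable. By Theorem~\ref{thm:invariant-distribution-of-W}(1), $\Q$ is $\fR$-stationary, so $\fR$ is a Markov operator on $L^2(\Q)$, in particular an $L^2(\Q)$-contraction; and by condition~(1) of Assumption~\ref{assumpt:clustering-env} (symmetry of $a(\cdot\,,\cdot)$) together with Theorem~\ref{thm:invariant-distribution-of-W}(2), $\Q$ is reversible, so $\fR$ is self-adjoint and $\mathrm{spec}(\fR)\subseteq[-1,1]$. The heart of the matter is Lemma~\ref{lemma:peripheral-spectrum-of-kernel}: the peripheral point spectrum of $\fR$ consists of the single, simple eigenvalue $1$, whose eigenspace is spanned by the $\Q$-a.e.\ constant functions. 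For simplicity of the eigenvalue $1$: if $\fR f=f$, then \eqref{eqn:transition-kernel-environment-chain} in the case $\alpha=0$ forces $f(\fe,0)=f(\fe,1)=:g(\fe)$ (because $\omega(0)>0$), and in the case $\alpha=1$ it reduces to $g(\fe)=\sum_{j\in\Z^d}\hat p(j)\,g(T_j\fe)$, i.e.\ $g$ is $\hat p$-harmonic on $\cE_\fK$; ergodicity of $W$ under $\Q$ (Theorem~\ref{thm:invariant-distribution-of-W}(1)) forces $g$, and hence $f$, to be $\Q$-a.e.\ constant. (In the recurrent regime one may argue more directly: $g(T_{X^\fe_n}\fe)$ is a bounded $\widehat P^\fe_\eta$-martingale, and recurrence of $\hat p$ makes $X^\fe_n$ visit every site of $\Z^d$ infinitely often, forcing $g\circ T_j\equiv g$ for all $j$ and then $g$ constant by ergodicity of $\bP$.) That no other unimodular value lies in the spectrum follows from uniform ellipticity: by \eqref{eqn:trans-prob-of-aux-proc}--\eqref{eqn:transformation-of-parameters} the holding probabilities $1-\omega(i)$ and $(1-q_s)\hat p(0)$ are bounded below by a deterministic constant $\delta\in(0,1)$ depending only on $c,\lambda,\fK$, so $Q_\fe(\eta,\eta)\geq\delta$ for all $\eta\in G$ and all $\fe\in\cE_\fK$; hence $\fR-\delta I$ is, up to the factor $1-\delta$, again a Markov operator preserving $\Q$, whence $\mathrm{spec}(\fR)\subseteq[2\delta-1,1]$ and in particular $-1\notin\mathrm{spec}(\fR)$.

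With the peripheral point spectrum of the self-adjoint operator $\fR$ thus reduced to the simple eigenvalue $1$ and with $-1$ excluded from the spectrum, the spectral theorem already yields $\fR^n h\to\int h\,\d\Q$ in $L^2(\Q)$. Proposition~\ref{prop:fundamental-theorem-of-MC}, applied to the bounded measurable function $h$, upgrades this to the pointwise convergence $\fR^n h\to\int h\,\d\Q$ $\Q$-a.e., which is precisely \eqref{eqn:weak-convergence-of-expectation} for $\Q$-a.e.\ $(\fe,\alpha)$. Reading this back through Lemma~\ref{lemma:equivalence-of-pi-P} gives \eqref{eqn:weak-convergence-of-expectation} for $\bP$-a.s.\ $\fe$ and both $\alpha\in\{0,1\}$, as claimed.

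The deterministic spectral gap at $-1$ is routine once uniform ellipticity is invoked. The two substantive points are: (i) the simplicity of the eigenvalue $1$, which rests on the ergodicity half of Theorem~\ref{thm:invariant-distribution-of-W} (proved in Appendix~\ref{sec-stationarity-n-large-numbers}) and, in the direct argument, on recurrence of the migration kernel; and (ii) the passage from $L^2(\Q)$-convergence to the \emph{quenched}, pointwise $\Q$-a.e.\ convergence via Proposition~\ref{prop:fundamental-theorem-of-MC}. Step (ii) is the non-standard ingredient, replacing the regeneration-structure and local-central-limit-theorem arguments customary in the RWRE literature, and it is here that condition~(2) of Assumption~\ref{assumpt:clustering-env} enters. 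I expect (ii) to be the main obstacle.
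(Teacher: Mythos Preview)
Your proof is correct and follows the same overall architecture as the paper: reversibility and ergodicity of $\Q$ from Theorem~\ref{thm:invariant-distribution-of-W}, exclusion of $-1$ from the point spectrum of $\fR$, then Proposition~\ref{prop:fundamental-theorem-of-MC} for $\Q$-a.e.\ pointwise convergence, and finally Lemma~\ref{lemma:equivalence-of-pi-P} to pass from $\Q$-a.e.\ to $\bP$-a.s.\ for both values of $\alpha$. The genuine difference lies in how you rule out $-1$. The paper's Lemma~\ref{lemma:peripheral-spectrum-of-kernel} argues that an $L_\infty$-eigenfunction for $-1$ would produce, for each fixed $\fe$, a bounded nonnegative $\hat p$-subharmonic function on $\Z^d$ satisfying \eqref{eqn:final-form-subharmonic-equation} with a constant $C>1$; recurrence of $\hat p$ (condition~(2) of Assumption~\ref{assumpt:clustering-env}) then forces such functions to be constant, and $C>1$ forces the constant to be zero. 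Your lazy-chain argument is different and more elementary: since $Q_\fe(\eta,\eta)\ge\delta>0$ uniformly in $\fe\in\cE_\fK$ and $\eta\in G$ (with $\delta$ determined by $c,\lambda,\fK$), the operator $(1-\delta)^{-1}(\fR-\delta I)$ is again a self-adjoint Markov operator preserving $\Q$, whence $\mathrm{spec}(\fR)\subseteq[2\delta-1,1]$ and $-1$ is excluded from the whole $L^2$-spectrum, not merely from the point spectrum. This buys you something real: your argument uses only uniform ellipticity and symmetry, so it establishes the conclusion without invoking condition~(2) at all. Consequently, your closing remark that condition~(2) ``enters'' in step~(ii) via Proposition~\ref{prop:fundamental-theorem-of-MC} is off---that proposition is a general Markov-chain statement and uses no such hypothesis. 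In the paper's route condition~(2) enters precisely in Lemma~\ref{lemma:peripheral-spectrum-of-kernel}; in your route it is simply not needed for this theorem.
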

The proof of Theorem~\ref{thm:weak-convergence-of-discrete-env} is a consequence of the proposition stated below.
This proposition is an analogue of the ``fundamental theorem of Markov chains on countable state spaces'' because it addresses Markov chains on general state spaces. We believe that this result is already known in the literature (see e.g., \cite{Lin82} or \cite{Burkholder61,Iwanik86,Cohen14}) on ergodic theory on Markov chains, but we have been unable to find a reference with an explicit proof of the statement. For the sake of completeness, the proof is given in Appendix~\ref{sec-fundamental-theorem-of-MC}.
\begin{proposition}{\bf [Fundamental theorem of MC]}
	\label{prop:fundamental-theorem-of-MC}
	 Let $(\Omega,\Sigma,\Q)$ be a probability space, where the $\sigma$-field $\Sigma$ is countably generated. Let $W := (W_n)_{n\in\N_0}$ be a Markov chain on the state space $\Omega$, and assume that $\Q$ is a reversible and ergodic stationary distribution for $W$. If $-1$ is not an eigenvalue of the Markov kernel operator $\fR\,:L_\infty(\Omega,\Q)\to L_\infty(\Omega,\Q)$ associated to $W$, then for every bounded measurable function $f \in \mathcal{F}_b(\Omega)$ and $\Q$-almost every $w\in\Omega,$
\begin{equation}
	\label{eqn:convergence-of-expectation}
	\lim\limits_{n\to\infty}\E_w[f(W_n)] = \int_{\Omega}f\,\d\Q,
\end{equation}
where the expectation on the left is taken w.r.t.\ the law of $W$ started at $w$.
\end{proposition}
\begin{remark}{\bf[Convergence in total variation]}
	{The above proposition only establishes weak convergence and gives no information on the rate of convergence in \eqref{eqn:convergence-of-expectation}. Under more stringent classical conditions on $W$, such as Harris recurrence or a Doeblin criterion (see e.g., \cite{Meyn93,Nummelin84} and \cite{Rob2004,Kulik7} for further references), uniqueness of the law $\Q$ holds and the chain converges in total variation norm from \emph{all} initial starting points. The existence of a \emph{spectral gap} of the operator $\fR$ results in \emph{geometric ergodicity}, where the convergence takes place at an exponential rate (see e.g., \cite{Kon11}). However, under the assumption of only aperiodicity and $\phi$-irreducibility of the Markov chain $W$, convergence in total variation holds only for $\Q$-almost all initial points.}
\end{remark}

\medskip\noindent
Although in the above remark we discuss convergence of a Markov chain in total variation norm, the reader should not hope for such a strong convergence of the auxiliary environment process $W$ given in Definition~\ref{defn:environment-chain}. Indeed, the process $W$ is a highly ``singular'' Markov chain living on a huge state space $\Omega_\fK$ and admits infinitely many invariant distributions (e.g., take $\bP = \delta_\fe$, where $\fe = (N,M)_{i\in\Z^d}$ is a translation-invariant environment with $(N,M)\in\N^2$, and construct $\Q$ by \eqref{eqn:defn-of-invariant-dist}). Thus, it is very unlikely for $W$ to be Harris recurrent, or to satisfy Doeblin-type conditions for that matter.
\begin{proof}[Proof of Theorem~{\rm\ref{thm:weak-convergence-of-discrete-env}}]
	By condition (1) of Assumption~\ref{assumpt:clustering-env} and Theorem~\ref{thm:invariant-distribution-of-W}, we see that $\Q$ is a reversible and ergodic distribution for the auxiliary environment process $W$. Observe from Proposition~\ref{prop:fundamental-theorem-of-MC}, if we are able to prove that $-1$ is not an eigenvalue of the Markov kernel operator $\fR\,: L_\infty(\Omega_\fK,\Q)\to L_\infty(\Omega_\fK,\Q)$ given in \eqref{eqn:transition-kernel-environment-chain}, then we can find a measurable $E\subseteq\Omega_\fK$ such that $\Q(E)=1$ and, for all $(\fe,\alpha)\in E$, \eqref{eqn:weak-convergence-of-expectation} holds for the function $h$. In particular, using Lemma~\ref{lemma:equivalence-of-pi-P} we can find a measurable $E^\prime\subset\cE_\fK$ with $\bP(E^\prime)=1$ and \eqref{eqn:weak-convergence-of-expectation} holds for all $(\fe,\alpha)\in E^\prime\times\{0,1\}$. Thus, the proof is complete once we show that $-1$ is not an eigenvalue of $\fR$ when viewed as an operator on $L_\infty(\Omega_\fK,\Q)$. We prove this in Lemma~\ref{lemma:peripheral-spectrum-of-kernel} stated below.
\end{proof}

\begin{lemma}{\bf[Trivial peripheral point-spectrum]}
	\label{lemma:peripheral-spectrum-of-kernel}
	Let $\fR$ be the Markov kernel operator (see \eqref{eqn:transition-kernel-environment-chain}) of the auxiliary environment process $W$, and $\Q$ be the invariant distribution of $W$ given in Theorem~{\rm\ref{thm:invariant-distribution-of-W}}. If condition {\rm (2)} in Assumption~{\rm \ref{assumpt:clustering-env}} holds, then $-1$ is not an eigenvalue of the kernel operator $\fR\,:L_\infty(\Omega_\fK,\Q)\to L_\infty(\Omega_\fK,\Q)$.
\end{lemma}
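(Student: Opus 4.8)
The plan is to argue by contradiction: suppose $g\in L_\infty(\Omega_\fK,\Q)$ is a non-zero eigenfunction with $\fR g = -g$. The strategy is to exploit reversibility of $\Q$ (from part (2) of Theorem~\ref{thm:invariant-distribution-of-W}, which uses the symmetry of $a(\cdot\,,\cdot)$) to run a standard ``harmonic-function'' argument against the second-largest modulus of the spectrum, but the subtlety is that $-1$ can be an eigenvalue for ``bipartite-type'' reasons: the state space $\Omega_\fK=\cE_\fK\times\{0,1\}$ carries a natural two-part structure (active/dormant layer), and a priori an eigenfunction of eigenvalue $-1$ might alternate sign between the layers. The first step, then, is to write out the eigenvalue equation $\fR g=-g$ explicitly using \eqref{eqn:transition-kernel-environment-chain}. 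Writing $g_1(\fe):=g(\fe,1)$ and $g_0(\fe):=g(\fe,0)$, the two components read
\begin{equation}
	q_s\,g_0(\fe) + (1-q_s)\sum_{j\in\Z^d}\hat{p}(j)\,g_1(T_j\fe) = -g_1(\fe),\qquad
	\omega(0)\,g_1(\fe) + [1-\omega(0)]\,g_0(\fe) = -g_0(\fe).
\end{equation}

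Next I would extract information from the second (``dormant-layer'') equation, which is purely local and does \emph{not} involve migration. It gives $\omega(0)\,g_1(\fe) = -[2-\omega(0)]\,g_0(\fe)$, i.e. $g_0(\fe) = -\tfrac{\omega(0)}{2-\omega(0)}\,g_1(\fe)$ for every $\fe$; note $2-\omega(0)\ge 1+\delta>0$ by uniform ellipticity, so this is well-defined and shows $g$ is completely determined by $g_1$, with a strictly negative, strictly bounded (in modulus by $1$) coupling coefficient $c(\fe):=\tfrac{\omega(0)}{2-\omega(0)}\in(0,1)$. Substituting into the first equation yields a single eigenvalue-type relation for $g_1$ alone:
\begin{equation}
	(1-q_s)\sum_{j\in\Z^d}\hat{p}(j)\,g_1(T_j\fe) = \big(q_s\,c(\fe) - 1\big)\,g_1(\fe).
\end{equation}
Since $q_s\in(0,1)$ and $c(\fe)\in(0,1)$, the factor $q_s c(\fe)-1$ lies in $(-1,0)$, bounded away from $-1$. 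The point of this reduction is that the surviving operator on $g_1$, namely $\fe\mapsto\sum_j\hat{p}(j)g_1(T_j\fe)$, is an honest (averaged) shift operator with \emph{non-negative} coefficients and total mass $1$, and with strictly positive mass at $j=0$ (because $\hat p(0)=\tfrac{\lambda\fK}{c+\lambda\fK}>0$); in other words we have transferred the problem to a genuinely aperiodic random walk on the environment, killing the bipartite obstruction.

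The main work — and the step I expect to be the real obstacle — is then to conclude that this reduced relation forces $g_1=0$ $\bP$-a.s. The plan is to integrate against $g_1$ with respect to the reduced reversible measure (here $\bP$ itself is stationary and, by condition~(1) of Assumption~\ref{assumpt:clustering-env}, reversible for the symmetrized kernel $\sum_j\hat p(j)g_1(T_j\cdot)$ since $\hat p$ inherits the symmetry of $a$). Writing $\lambda_0(\fe):= 1 - q_s c(\fe)\in(q_s\cdot 0\,,\,1)$ — better, bounded in $[1-q_s,1)$ — the relation says $(1-q_s)\,\langle \fR_{\mathrm{rw}}g_1,g_1\rangle_{\bP} = -\langle \lambda_0\, g_1, g_1\rangle_\bP$, where $\fR_{\mathrm{rw}}$ is the self-adjoint averaged-shift operator with $\|\fR_{\mathrm{rw}}\|\le 1$. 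The left side is therefore at least $-(1-q_s)\|g_1\|_{L_2}^2$, while the right side is at most $-(1-q_s)\|g_1\|_{L_2}^2$ only if $\lambda_0\equiv 1-q_s$, i.e. $c(\fe)\equiv 1$, which is impossible since $c(\fe)<1$ strictly. Making this inequality strict whenever $g_1\ne 0$ is where care is needed: one must rule out that $g_1$ is supported where $\fR_{\mathrm{rw}}$ acts like $-\mathrm{Id}$, which cannot happen because $\fR_{\mathrm{rw}}$ has non-negative kernel (so its spectrum is contained in $[0,1]$ on $L_2(\bP)$, using $\hat p(0)>0$ and positivity), forcing $\langle \fR_{\mathrm{rw}}g_1,g_1\rangle_\bP\ge 0$; combined with the right-hand side being $\le -(1-q_s)\|g_1\|^2<0$ for $g_1\ne0$, we reach a contradiction. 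I would double-check the $L_\infty$-versus-$L_2$ passage: an $L_\infty$ eigenfunction lies in $L_2(\Q)$ because $\Q$ is a probability measure, and $\Q$, $\bP$ are mutually absolutely continuous on the relevant events by Lemma~\ref{lemma:equivalence-of-pi-P}, so the $L_2(\bP)$ computation is legitimate; the one genuine delicacy is verifying that $\fR_{\mathrm{rw}}$ really is self-adjoint on $L_2(\bP)$, which is exactly where symmetry of the migration kernel (condition~(1) of Assumption~\ref{assumpt:clustering-env}) enters, mirroring its role in part~(2) of Theorem~\ref{thm:invariant-distribution-of-W}.
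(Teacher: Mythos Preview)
Your reduction to the single relation $(1-q_s)\sum_j\hat p(j)\,g_1(T_j\fe)=-\lambda_0(\fe)\,g_1(\fe)$ with $\lambda_0(\fe)=1-q_sc(\fe)$ is exactly right, and the $L_2(\bP)$ inner-product argument that follows is sound: from $|\langle \fR_{\mathrm{rw}}g_1,g_1\rangle_\bP|\le \|g_1\|_2^2$ (just Cauchy--Schwarz plus the contraction property of the averaged shift, which needs only translation-invariance of $\bP$) and from the uniform ellipticity bound $c(\fe)\le\tfrac{1-\delta}{1+\delta}<1$ one gets $-(1-q_s)\|g_1\|_2^2\le -\langle \lambda_0 g_1,g_1\rangle_\bP\le -C_1\|g_1\|_2^2$ with $C_1:=1-q_s\tfrac{1-\delta}{1+\delta}>1-q_s$, forcing $g_1=0$. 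This is a genuinely different route from the paper's proof: there one takes absolute values in the reduced relation, observes that $i\mapsto|g_1(T_i\fe)|$ is a bounded nonnegative $\hat p$-subharmonic function with multiplicative excess $C>1$, and then uses \emph{recurrence} of $\hat p$ (condition~(2) of Assumption~\ref{assumpt:clustering-env}) together with Doob's theorem to conclude that this function is constant, hence zero. Your spectral argument is arguably more elementary and, notably, does not use recurrence at all---only ellipticity and the contraction property---so it in fact proves more than the lemma states.

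Two points to clean up. First, your assertion that $\fR_{\mathrm{rw}}$ has spectrum in $[0,1]$ because $\hat p(0)>0$ is false in general (one only gets spectrum in $[2\hat p(0)-1,1]$, which need not lie in $[0,\infty)$); fortunately you do not need this---the bound $\langle\fR_{\mathrm{rw}}g_1,g_1\rangle\ge-\|g_1\|_2^2$ already suffices and follows from the contraction property alone. Second, you repeatedly invoke symmetry of $a(\cdot,\cdot)$ (condition~(1) of Assumption~\ref{assumpt:clustering-env}) to get self-adjointness of $\fR_{\mathrm{rw}}$, but the lemma's hypothesis is condition~(2), not condition~(1); moreover self-adjointness is not actually needed for your inequality, so you should drop that appeal and simply cite the $L_2(\bP)$-contraction of the averaged shift (which holds by translation-invariance of $\bP$, no symmetry required).
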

\begin{proof}
Let $g\in L_\infty(\Omega_\fK,\Q)$ be such that 
\begin{equation}
	\label{eqn:negative-eignevalue}
	\fR g = - g\quad\Q\text{-a.s.}
\end{equation} 
We show $g=0$ a.s. As we will see below, this will follow from condition (2) in Assumption~\ref{assumpt:clustering-env}, which ensures that the increment distribution $\hat{p}(\cdot)$ defined in terms of $a(\cdot\,,\cdot)$ in \eqref{eqn:transformation-of-parameters} does not admit any non-constant and nonnegative bounded  subharmonic function.
With this aim, let $A\subseteq\Omega_\fK$ be measurable with $\Q(A) = 1$ and such that \eqref{eqn:negative-eignevalue} holds for all $(\fe,\alpha)\in A$. Without loss of generality, we can also assume that 
\begin{equation}
	\label{eqn:bound-on-eigenfunction}
	|g(\fe,\alpha)|\leq \|g\|_\infty\quad \forall \ (\fe,\alpha)\in A.
\end{equation}
By Lemma~\ref{lemma:equivalence-of-pi-P}, there exists a measurable $A^\prime\subseteq\cE_\fK$ such that $\bP(A^\prime)=1$ and \eqref{eqn:negative-eignevalue} holds for all $(\fe,\alpha)\in A^\prime\times\{0,1\}\subseteq A$. Using \eqref{eqn:transition-kernel-environment-chain}, we compute $\fR g$ and obtain from \eqref{eqn:negative-eignevalue} that
\begin{equation}
	\label{eqn:expanded-negative-eigenvalue}
	\begin{aligned}
		g(\fe,0)&= -\big[\omega(0) g(\fe,1) + (1-\omega(0)) g(\fe,0)\big],\\
	g(\fe,1)&= -\big[q_s\, g(\fe,0)+(1-q_s)\sum_{j\in\Z^d}\hat{p}(j)g(T_j\fe,1)\big]
	\end{aligned}
,\quad\fe\in A^\prime,
\end{equation}
where, as before, $\omega,\hat{p}$ and $q_s$ are defined by \eqref{eqn:transformation-of-parameters} in terms of $\fe$ and the other parameters. Now, using the translation invariance of $\bP$, we also have
\begin{equation}
	\bP(B_\text{inv})=1,\quad B_\text{inv}:=\bigcap_{j\in\Z^d} T_j^{-1}(A^\prime)\subseteq A^\prime,
\end{equation}
where, trivially, $B_\text{inv}$ is a translation-invariant set. We get from \eqref{eqn:expanded-negative-eigenvalue} that
\begin{equation}
	\label{eqn:subharmonic-root-equation}
	\begin{aligned}
		&g(\fe,0) = -\frac{\omega(0)}{2-\omega(0)}g(\fe,1),\\
		\sum_{j\in\Z^d} &\hat{p}(j)g(T_j\fe,1) = - \Big[\frac{2-(1+q_s)\omega(0)}{(2-\omega(0))(1-q_s)}\Big]g(\fe,1),
	\end{aligned}
\end{equation} 
for all $\fe\in B_\text{inv}$. By ellipticity (see Definition~\ref{defn:elliptic-environment}) of $\fe\in B_\text{inv}$, we can find a $\delta\in(0,\tfrac{1}{2})$ such that $\delta<\omega(0)<1-\delta$ for all $\omega=(\omega(k))_{k\in\Z^d}$ determined by $\fe\in B_\text{inv}$. In particular, setting
\begin{equation}
	\label{defn:crucial-constant-C}
	C:=\frac{1}{1-q_s}\Big[1-\tfrac{1-\delta}{1+\delta}\,q_s\Big],
\end{equation}
we see that
\begin{equation}
	\frac{2-(1+q_s)\omega(0)}{(2-\omega(0))(1-q_s)} \geq C,
\end{equation}
and also $C>1$ as $\delta\in(0,\tfrac{1}{2})$.
Combining the above with \eqref{eqn:subharmonic-root-equation}, we have
\begin{equation}
	\label{eqn:main-subharmonic-equation}
		\Big|\sum_{j\in\Z^d}\hat{p}(j)g(T_j\fe,1)\Big| =\Big|\tfrac{2-(1+q_s)\omega(0)}{(2-\omega(0))(1-q_s)}\Big| |g(\fe,1)|
		\geq C |g(\fe,1)|,\quad \fe\in B_\text{inv}.
\end{equation}
Using the triangle inequality, we get
\begin{equation}
\sum_{j\in\Z^d}\hat{p}(j)|g(T_j\fe,1)|\geq C|g(\fe,1)|,\quad\fe\in B_\text{inv}.
\end{equation}
Because $B_\text{inv}$ is translation-invariant, the above implies that for any $\fe\in B_\text{inv}$ and all $i\in\Z^d$,
\begin{equation}
	\label{eqn:final-form-subharmonic-equation}
\sum_{j\in\Z^d}\hat{p}(j)|g(T_{i+j}\fe,1)|\geq C|g(T_i\fe,1)|.
\end{equation}
Since $C>1$, the above equation tells that, for a fixed $\fe\in B_\text{inv}$, the map $i\mapsto |g(T_i\fe,1)|$ is a bounded (recall \eqref{eqn:bound-on-eigenfunction}) non-negative subharmonic function for $\hat{p}(\cdot)$. Now, by condition (2) in Assumption~\ref{assumpt:clustering-env}, a random walk on $\Z^d$ with increment distribution $\hat{p}(\cdot)$ defined as in \eqref{eqn:trans-prob-of-aux-proc} is irreducible and recurrent (see e.g., \cite[Chapter 4]{Lawler2009}). Therefore, any bounded nonnegative subharmonic function of $\hat{p}(\cdot)$ on $\Z^d (d\leq 2)$ must be a constant (by an application of Doob's submartingale convergence theorem). In particular, for any $\fe\in B_\text{inv}$ and all $i\in\Z^d$,
\begin{equation}
	\label{eqn:constant-modulas}
	|g(T_i\fe,1)| = |g(\fe,1)|.
\end{equation}
Since $C>1$, the only way in which \eqref{eqn:final-form-subharmonic-equation} complies with \eqref{eqn:constant-modulas}, is when $|g(\fe,1)|=0$, so \eqref{eqn:subharmonic-root-equation} implies that $g(\fe,0)=0$ as well. Thus, $g=0$ on $B_\text{inv}\times\{0,1\}$ and, since $\bP(B_\text{inv})=1$, we see by Lemma~\ref{lemma:equivalence-of-pi-P} that $\Q(B_\text{inv}\times\{0,1\})=1$.
\end{proof}
\begin{remark}{\bf [Peripheral point-spectrum in $L_1$]}
	Using \cite[Lemma 2]{Iwanik86}, we can actually show that $-1$ is not an eigenvalue of $\fR$ in $L_1(\Omega_\fK,\Q)$ as well. But convergence of $\fR^{2n} f$ may fail as $n\to\infty$, when it is merely assumed that $f\in L_1(\Omega_\fK,\Q)$ (see e.g., \cite{Ornstein68}), and therefore Proposition~\ref{prop:fundamental-theorem-of-MC} does not hold in general for such $f$.
\end{remark}
\subsection[Transference of convergence: discrete to continuous]{Transference of convergence: discrete to continuous}
\label{sec-homogen-in-cont-time}
In this section we prove Theorem~\ref{thm:weak-convergence-of-contin-env} and Corollary~\ref{corollary:limit-from-any-start-point} by utilising the results derived in the Section~\ref{sec-stationary-env-weak-conv}.

Before we start with the proof of Theorem~\ref{thm:weak-convergence-of-contin-env}, let us briefly elaborate on its statement. In Section~\ref{sec-subordinate-environment-chain} we introduced in Definition~\ref{defn:environment-chain} the discrete-time auxiliary environment process $W$ associated to the subordinate Markov chain $\widehat{\Theta}^\fe$. We can also, in a similar fashion, extend the definition of $W$ to construct a continuous-time environment process $w := (w_t)_{t\geq 0}$ for the single-particle dual $\Theta^\fe$ (recall Definition~\ref{defn:single-particle-dual}). Indeed, we obtain the process $w$ by simply putting
\begin{equation}
w_t := (\fe_t,\alpha_t) \text{ with } \fe_t := T_{x^\fe_t}\fe,\ \alpha_t:=\alpha_t^\fe,
\end{equation}
for each $t\geq 0$, where $\Theta^\fe = (x_t^\fe,\alpha^\fe_t)_{t\geq 0}$ is as in Definition~\ref{defn:single-particle-dual}. Upon closer inspection of \eqref{eqn:homogeneous-density-at-infinity} and the definition of $w$, we see that Theorem~\ref{thm:weak-convergence-of-contin-env} basically states that
\begin{equation}
	\label{eqn:alternative-representation-of-limit}
	\lim\limits_{t\to\infty}E^\fe_{(0,\alpha)}[\alpha_tf_A(\fe_t)+(1-\alpha_t)f_D(\fe_t)] = \theta
\end{equation}
for $\bP$-almost every realization of the environment $\fe$, where $f_A,f_D$ and $\theta$ are as in the theorem. In other words, \eqref{eqn:alternative-representation-of-limit} is equivalent to saying that the process $w$ converges in distribution to the law $\Q$ given in \eqref{eqn:defn-of-invariant-dist} for $\bP$-almost every realization of $\fe\in\cE_\fK$ and any $\alpha\in\{0,1\}$.
\begin{proof}[Proof of Theorem~{\rm\ref{thm:weak-convergence-of-contin-env}}]
	From Lemma~\ref{lemma:poisson-clock-representation}, we observe that
	\begin{equation}
		p_t^\fe((0,\alpha),(j,\beta)) = \sum_{n=0}^\infty \widehat{P}_{(0,\alpha)}^\fe(\widehat{\Theta}^\fe_n=(j,\beta))\,\P(N_t=n),\quad (j,\beta)\in G,\,\fe\in \cE_\fK,\,t\geq 0,
	\end{equation}
	where $p_t^\fe(\cdot\,,\cdot)$ is as in Definition~{\rm \ref{defn:single-particle-dual}}, $\widehat{\Theta}^\fe=(\widehat{\Theta}^\fe_n)_{n\in\N_0}$ is the subordinate Markov chain with law $\widehat{P}^\fe_{(0,\alpha)}$ (see Definition~\ref{defn:discrete-coordinate-process}) and $(N_t)_{t\geq 0}$ is the Poisson process mentioned in the lemma, which is independent of $\widehat{\Theta}^\fe$. Thus, using the above, the left-hand side of \eqref{eqn:deterministic-limit-of-functionals}, which we abbreviate by $l((\fe,\alpha),t)$ for any $t\geq 0$, can be written as
	\begin{equation}
		\label{eqn:explicit-expr-of-infinite-sum}
		\begin{aligned}
			l((\fe,\alpha),t)&=\sum_{(j,\beta)\in G}\Big[\sum_{n\in\N_0} \widehat{P}_{(0,\alpha)}^\fe(\widehat{\Theta}^\fe_n=(j,\beta))\,\P(N_t=n)\Big]\big\{\beta f_A(T_j\fe)+(1-\beta)f_D(T_j\fe)\big\}\\
			&=\sum_{n\in\N_0}\Big[\sum_{(j,\beta)\in G} \widehat{P}_{(0,\alpha)}^\fe(W_n=(T_j\fe,\beta))\,\big\{\beta f_A(T_j\fe)+(1-\beta)f_D(T_j\fe)\big\}\Big]\P(N_t=n)\\
			&=\sum_{n\in\N_0} \widehat{E}_{(0,\alpha)}^\fe\big[h(W_n)\big]\P(N_t=n),
		\end{aligned}
	\end{equation}
where the interchange of the order of summation in the second equality is justified by Fubini's theorem, $(W_n)_{n\in\N_0}$ is the auxiliary environment process (see Definition~\ref{defn:environment-chain}), and $h\,:\cE_\fK\times\{0,1\}\to\R$ is the map $(\fe,\alpha)\mapsto \alpha f_A(\fe)+(1-\alpha)f_D(\fe)$. By virtue of Theorem~\ref{thm:weak-convergence-of-discrete-env}, we can find a measurable $B\in\Sigma$ with $\bP(B)=1$ such that, for all $\fe\in B$ and any $\alpha\in\{0,1\}$,
\begin{equation}
	\lim\limits_{n\to\infty} \widehat{E}_{(0,\alpha)}^\fe\big[h(W_n)\big] = \int_{\Omega_\fK} h(\mathfrak{b},\beta)\,\d\Q(\mathfrak{b},\beta) = \theta,
\end{equation}
where $\theta$ is as in \eqref{eqn:value-of-homogenised-average}. Fix $\fe\in B$, $\alpha\in\{0,1\}$ and $\epsilon > 0$. By virtue of the above, we can find $N_\fe\in\N$ such that, for all $n\geq N_\fe$, $|\widehat{E}_{(0,\alpha)}^\fe\big[h(W_n)\big]-\theta|<\epsilon$. Finally, from \eqref{eqn:explicit-expr-of-infinite-sum}, we get
\begin{equation}
\begin{aligned}
|l((\fe,\alpha),t)-\theta|&\leq\sum_{n=0}^\infty \big|\widehat{E}_{(0,\alpha)}^\fe\big[h(W_n)\big]-\theta\big|\,\P(N_t=n)\\&\leq 2\|h\|_\infty\,\P(N_t<N_\fe)+\epsilon\,\P(N_t\geq N_\fe)\\
&\leq 2\|h\|_\infty\,\P(N_t<N_\fe)+\epsilon.
\end{aligned}
\end{equation}
Since $N_t\to\infty$ with probability 1 as $t\to\infty$, letting $t\to\infty$ in the above, we see
\begin{equation}
\limsup_{t\to\infty}|l((\fe,\alpha),t)-\theta|\leq \epsilon.
\end{equation}
As $\epsilon>0$ is arbitrary, we get that \begin{equation}
	\lim\limits_{t\to\infty} l((\fe,\alpha),t)=\theta
\end{equation}
for all $\fe\in B$ and $\alpha\in\{0,1\}$. This proves the claim in \eqref{eqn:deterministic-limit-of-functionals}.
\end{proof}
\medskip\noindent

\begin{proof}[Proof of Corollary~{\rm\ref{corollary:limit-from-any-start-point}}]
	The proof basically follows from the translation-invariance of $\bP$ and Lemma~\ref{lemma:cycle-property-of-law}. Indeed, using Theorem~\ref{thm:weak-convergence-of-contin-env}, we can find a measurable $B\in\Sigma$ such that $\bP(B)=1$ and, for all $\fe\in B$, $\alpha\in\{0,1\}$,
\begin{equation}
	\lim\limits_{t\to\infty}\sum_{(j,\beta)\in G}p_t^\fe((0,\alpha),(j,\beta))\big[\beta f_A(T_j\fe)+(1-\beta)f_D(T_j\fe)\big] = \theta,
\end{equation}
where $\theta$ is as in \eqref{eqn:value-of-homogenised-average}. Letting $B_\text{inv}:=\cap_{j\in\Z^d}T_j^{-1}B$,
we see that $B_\text{inv}\in\Sigma$ is translation-invariant and $\bP(B_\text{inv})=1$. In particular, for any $\fe\in B_\text{inv}$ and all $(i,\alpha)\in\Z^d\times\{0,1\}$,
\begin{equation}
\lim\limits_{t\to\infty}\sum_{(j,\beta)\in G}p_t^{T_i\fe}((0,\alpha),(j,\beta))\big[\beta f_A(T_j(T_i\fe))+(1-\beta)f_D(T_j(T_i\fe))\big] = \theta.
\end{equation}
Also, using Lemma~\ref{lemma:cycle-property-of-law}--\ref{lemma:poisson-clock-representation}, we see that, for any $t\geq 0$ and $(j,\beta)\in\Z^d\times\{0,1\}$,
\begin{equation}
	p^{T_i\fe}_t((0,\alpha),(j,\beta)) = p^{\fe}_t((i,\alpha),(i+j,\beta)),\quad\forall i\in\Z^d,\alpha\in\{0,1\}.
\end{equation}
Combining the last two equations, for all $(i,\alpha)\in\Z^d\times\{0,1\}$, we get
\begin{equation}
\lim\limits_{t\to\infty}\sum_{(j,\beta)\in G}p_t^{\fe}((i,\alpha),(i+j,\beta))\big[\beta f_A(T_{i+j}\fe)+(1-\beta)f_D(T_{i+j}\fe)\big] = \theta,
\end{equation}
which after a change of variable in the summation translates to
\begin{equation}
	\lim\limits_{t\to\infty}\sum_{(j,\beta)\in G}p_t^{\fe}((i,\alpha),(j,\beta))\big[\beta f_A(T_j\fe)+(1-\beta)f_D(T_j\fe))\big] = \theta.
\end{equation}
The proof is complete by the observation that $\bP(B_\text{inv})=1$, and the above holds for any $\fe\in B_\text{inv}$.
\end{proof}
\section{Proof of main theorems}
\label{sec-proof-of-main-theorem}
In this section we prove the two main results given in Section~\ref{sec-clustering-in-fixed-env}--\ref{sec-clustering-in-random-env}. In Section~\ref{sec-prelims-on-dual}, we derive a consistency property of the general dual $\sZ_*$ of the process $\sZ$. Using this preliminary result on the dual, in Section~\ref{sec-proof-of-main-theorems} we prove Theorem~\ref{thm:domain-of-attr-cluster}, Corollary~\ref{coro:constant-density-at-infinity}, and using Theorem~\ref{thm:domain-of-attr-cluster} and the previous homogenization result on the single-particle dual $\Theta^\fe$ (see Definition~\ref{defn:discrete-coordinate-process}), we prove  Theorem~\ref{thm:random-env-clustering}.
\subsection{Preliminaries: consistency of dual process}
\label{sec-prelims-on-dual}
 We start by recalling from \cite{HN01} the duality relation between the spatial process $\sZ$ and the dual process $Z_*^\fe$ that will be needed for the proof of our main theorems.
\begin{theorem}{\bf [Duality relation]{\rm \cite[Corollary 3.11]{HN01}}}
	\label{thm:duality-relation}
	Suppose that Assumption~{\rm \ref{assumpt1}} is in force. Then, for every admissible environment $\fe=(N_i,M_i)_{i\in\Z^d}\in\cA$, the following duality relation holds between the two processes $\sZ$ and $Z^\fe_*$:
	\begin{equation}
		\label{eqn:duality-relation}
		\E_U[D^\fe(\sZ(t),V)] = \E_*^{V}[D^\fe(U,Z_*^\fe(t))],\quad t\geq 0.
	\end{equation}
	Here the expectation on the left (right) side is taken w.r.t.\ the law of $\sZ$ ($Z_*^\fe$) started at $U \in\sX$ ($V \in\sX_*$), and $D^\fe:\,\sX\times\sX_*\to[0,1]$ is the duality function defined by
	\begin{equation}
		\label{eqn:defn-duality-function}
		D^\fe(U,V) = \prod_{i\in\Z^d}\frac{\binom{X_i}{n_i}}{\binom{N_i}{n_i}}\frac{\binom{Y_i}{m_i}}{\binom{M_i}{m_i}}\mathbf{1}_{n_i\leq X_i,m_i\leq Y_i},
	\end{equation}  
	with $U=(X_i,Y_i)_{i\in\Z^d}\in\sX$ and $V=(n_i,m_i)_{i\in\Z^d}\in \sX_*$.
\end{theorem}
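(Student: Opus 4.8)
Since the statement is quoted verbatim from \cite[Corollary 3.11]{HN01}, one legitimate option is simply to cite it; below I sketch how I would prove it from scratch. The plan is to deduce the process-level identity \eqref{eqn:duality-relation} from an \emph{infinitesimal} (generator) duality together with a standard closure argument. Write $\mathcal{L}^\fe$ for the generator of $\sZ$ and $\mathcal{L}_*^\fe$ for the generator of $Z_*^\fe$, and split each into a ``resampling'' part and an ``exchange'' part. The heart of the matter is to verify
\begin{equation}
	\label{eqn:generator-duality}
	\big(\mathcal{L}^\fe D^\fe(\cdot\,,V)\big)(U) = \big(\mathcal{L}_*^\fe D^\fe(U,\cdot\,)\big)(V), \qquad U\in\sX,\ V\in\sX_*,
\end{equation}
i.e.\ that acting with the forward generator on the first argument of $D^\fe$ produces the same function as acting with the dual generator on the second argument. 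Granting \eqref{eqn:generator-duality}, the relation \eqref{eqn:duality-relation} follows by the usual argument: running independent copies of $\sZ$ and $Z_*^\fe$, the map $s\mapsto\E_U\E_*^V[D^\fe(\sZ(s),Z_*^\fe(t-s))]$ has vanishing derivative on $[0,t]$ by \eqref{eqn:generator-duality} and the Markov property, so its values at $s=0$ and $s=t$ agree, which is exactly \eqref{eqn:duality-relation}.

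To check \eqref{eqn:generator-duality} I would exploit the probabilistic reading of $D^\fe$: at colony $i$ the factor $\binom{X_i}{n_i}/\binom{N_i}{n_i}$ is the probability that a uniformly chosen $n_i$-subset of the $N_i$ active individuals consists only of type-$\heartsuit$ individuals, so $D^\fe(U,V)$ is the probability that an independent uniform sample of size $(n_i,m_i)$ taken at each colony $i$ (active and dormant) is monochromatic $\heartsuit$. For the resampling part one computes the effect on $D^\fe$ of an active individual at $i$ adopting the type of a uniform active individual at $j$ (rate $a(i,j)$); via the hypergeometric identities the net effect on the sample is that a sampled individual at $i$ is, with the appropriate probability, relocated to $j$, and if $j$ already carries a sampled lineage the two merge --- precisely the migration and coalescence moves of the dual, with Assumption~\ref{assumpt1}(2) making the rates translation-invariant so the bookkeeping matches term by term. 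For the exchange part one notes that a fixed dormant individual at $i$ is drawn in an exchange at rate $N_i\lambda/M_i=\lambda K_i$ while a fixed active one is drawn at rate $\lambda$; on $D^\fe$ this moves a sampled individual between the active and the dormant layer at colony $i$, exactly the active--dormant switch of the dual with the rates of Definition~\ref{defn:single-particle-dual}. A less computational alternative is a graphical construction: build $\sZ$ from Poisson arrows for resampling and exchange, tag at time $t$ a uniform $(n_i,m_i)$-sample at each colony, follow ancestral lineages back to time $0$ (they evolve, run backwards, as $Z_*^\fe$ started from $V$, with coalescence when two lineages share a parent), observe that a tagged individual is $\heartsuit$ iff its time-$0$ ancestor is, and use exchangeability to identify the conditional probability of a monochromatic ancestor set given $Z_*^\fe(t)$ with $D^\fe(U,Z_*^\fe(t))$; taking expectations yields \eqref{eqn:duality-relation}.

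The remaining ingredients are routine but not vacuous: $D^\fe$ takes values in $[0,1]$, hence is bounded; $\sZ$ is a well-defined Feller process for every compatible environment by \cite[Theorem 2.2]{HN01}, and the dual $Z_*^\fe$, whose configurations are finite, is non-explosive under admissibility, since the growth bounds on $(N_i)_{i\in\Z^d}$ in Definition~\ref{defn:admissible-colony-sizes}(b)--(c) together with the moment bounds on $a(\cdot\,,\cdot)$ prevent a dual particle from quickly reaching colonies where the wake-up rate $\lambda K_i$ is large; finally $D^\fe(\cdot\,,V)$ depends on finitely many coordinates and so lies in the domain of $\mathcal{L}^\fe$, and $D^\fe(U,\cdot\,)$ lies in the domain of $\mathcal{L}_*^\fe$. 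With these in place the passage from \eqref{eqn:generator-duality} to \eqref{eqn:duality-relation} is the standard duality-from-generators lemma. I expect the main obstacle to be the term-by-term combinatorial matching in the resampling part, where dual migration and dual coalescence must be seen to arise \emph{simultaneously} from a single resampling transition with all hypergeometric prefactors conspiring to give exactly the dual rates; on the analytic side the delicate point is justifying convergence of the (a priori infinite) sums defining $\mathcal{L}^\fe D^\fe(\cdot\,,V)$ and $\mathcal{L}_*^\fe D^\fe(U,\cdot\,)$ and the non-explosion of the dual, which is exactly where the admissibility conditions on $\fe$ and the moment condition on $a(\cdot\,,\cdot)$ are used.
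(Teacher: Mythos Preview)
The paper does not prove this statement: it is stated with an explicit citation to \cite[Corollary~3.11]{HN01} and is merely \emph{recalled} as a preliminary, with no proof or sketch given. You correctly note this at the outset. Your sketch of the generator-duality route (verify $\mathcal{L}^\fe D^\fe(\cdot,V)=\mathcal{L}_*^\fe D^\fe(U,\cdot)$ term by term, then integrate via the constant-in-$s$ interpolation $s\mapsto\E_U\E_*^V[D^\fe(\sZ(s),Z_*^\fe(t-s))]$) is the standard and correct approach, and your identification of the analytic side conditions (boundedness of $D^\fe$, well-posedness of $\sZ$, non-explosion of the dual, domain membership) is accurate and is indeed where admissibility of $\fe$ enters. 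The alternative graphical/ancestral-lineage argument you outline is also sound. Since the present paper offers no proof to compare against, there is nothing further to contrast; your proposal is a faithful outline of how the cited result is established in \cite{HN01}.
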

The next lemma establishes the relation between the process $\Theta^\fe$ and the general dual $\sZ_*$. We omit the proof for brevity, as this easily follows from the fact that any injective transformation preserves the Markov property and a unique such transformation exists that maps $\Theta^\fe$ to the dual process $\sZ_*$ started at a configuration consisting of only a single particle.
\begin{lemma}{\bf[Relation between $\Theta^\fe$ and $\sZ_*$]}
	\label{lemma:relation-between-theta-dual}
	For $i\in\Z^d$, let $\vec{\delta}_{i,A}$ ({\corrected resp.}\ $\vec{\delta}_{i,D}$) $\in\sX_*$ denote the configuration containing a single active (resp.\ dormant) particle at location $i$. Formally,
	\begin{equation}
		\label{kron_del}
		\vec{\delta}_{i,A} := (\mathbf{1}_{\{n=i\}},0)_{n\in\Z^d},\quad
		\vec{\delta}_{i,D}:=(0,\mathbf{1}_{\{n=i\}})_{n\in\Z^d},
	\end{equation}
	and for $\eta=(i,\alpha)\in \Z^d\times\{0,1\}$, let $\vec{\delta}_{\eta}:= \mathbf{1}_{\alpha=1}\,\vec{\delta}_{i,A}+\mathbf{1}_{\alpha=0}\,\vec{\delta}_{i,D}$. If $\P^\varphi_\fe$ denotes the law of $\sZ_*$ started at $\varphi\in\sX_*$, then, for all $t\geq 0,$
	\begin{equation}
		p_t^\fe(\eta,\xi) = \P^{\vec{\delta}_\eta}_\fe(\sZ_*(t)=\vec{\delta}_\xi),\quad \eta,\xi\in \Z^d\times\{0,1\},
	\end{equation}
	where $p_t^\fe(\cdot\,,\cdot)$ is as in Definition~{\rm \ref{defn:single-particle-dual}}.
\end{lemma}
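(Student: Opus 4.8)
The plan is to realize $\Theta^\fe$ as the image, under the obvious bijection between $G$ and the single-particle configurations, of the process $\sZ_*$ run from such a configuration, and then to match transition rates.

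First I would note that the set $S_1 := \{\vec{\delta}_{i,A} : i\in\Z^d\}\cup\{\vec{\delta}_{i,D} : i\in\Z^d\}\subseteq\sX_*$ of configurations carrying exactly one dual particle is invariant under the dynamics of $\sZ_*$. Indeed, the elementary transitions of $\sZ_*$ are migration of an active particle, switching of a particle between the active and dormant states, and coalescence; the first two conserve the total number of particles, while coalescence merges two particles into one and so cannot be triggered when only a single particle is present. Hence, started from any $\varphi\in S_1$, the process $\sZ_*$ remains in $S_1$ for all time, and since $\sZ_*$ is a well-defined Markov process on $\sX_*$ (see \cite{HN01}), its restriction $(\sZ_*(t))_{t\geq 0}$ to the countable invariant set $S_1$ is a continuous-time Markov chain on $S_1$.

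Next, the map $\iota\colon G\to S_1$, $\eta\mapsto\vec{\delta}_\eta$ (with $\vec{\delta}_\eta$ as in \eqref{kron_del}), is a bijection: distinct pairs $(i,\alpha)\in\Z^d\times\{0,1\}$ clearly give distinct one-particle configurations, and every element of $S_1$ arises this way. A bijection between countable state spaces carries a Markov chain to a Markov chain with conjugated transition kernel, so $(\iota^{-1}(\sZ_*(t)))_{t\geq 0}$, with $\sZ_*$ started from $\vec{\delta}_\eta$, is a continuous-time Markov chain on $G$ whose time-$t$ transition probabilities are exactly $\xi\mapsto\P^{\vec{\delta}_\eta}_\fe(\sZ_*(t)=\vec{\delta}_\xi)$.

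It then remains to identify this chain with $\Theta^\fe$, i.e.\ to check that the two have the same generator. Here I would specialize the interacting-coalescing transition rates of $\sZ_*$ from \cite[Definition 3.7]{HN01} to the one-particle sector: a lone active particle at $i$ migrates to $j\neq i$ at rate $a(0,j-i)$ and becomes dormant at rate $\lambda$, whereas a lone dormant particle at $i$ wakes up at rate $\lambda K_i$ and makes no other move. These coincide with the rates in \eqref{eqn:trans-rates-of-theta}, so the two chains are equal in law and in particular $p_t^\fe(\eta,\xi)=\P^{\vec{\delta}_\eta}_\fe(\sZ_*(t)=\vec{\delta}_\xi)$ for all $t\geq 0$ and $\eta,\xi\in G$. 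The only step that is not pure bookkeeping is this last one: it requires the explicit form of the dual rates from \cite{HN01} and, in particular, the observation that the interaction and coalescence contributions to those rates vanish identically on single-particle configurations, leaving precisely the rates of $\Theta^\fe$.
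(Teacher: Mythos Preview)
Your proof is correct and follows essentially the same approach as the paper, which omits the proof, noting only that it follows from the fact that an injective transformation preserves the Markov property and that there is a unique such transformation mapping $\Theta^\fe$ to the dual $\sZ_*$ started from a single-particle configuration. Your write-up simply fleshes this out: the invariance of the one-particle sector $S_1$, the bijection $\iota\colon G\to S_1$, and the verification that the specialized dual rates coincide with \eqref{eqn:trans-rates-of-theta}.
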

The following lemma, which is essentially a consequence of Assumption~\ref{assumpt1}, tells us that any bounded harmonic function of the single-particle dual process $\Theta^\fe$ is a constant. 
\begin{lemma}{\bf [Constant harmonics]}
	\label{lemma:contant-harmonic-single-particle}
	Let $\Theta^\fe = (\Theta^\fe(t))_{t\geq 0}$ be the process defined in Definition~{\rm \ref{defn:single-particle-dual}} started at $\eta\in G$ with law $P_\eta^\fe$, where $G=\Z^d\times\{0,1\}$ and $\fe:=(N_i,M_i)_{i\in\Z^d}$. Let $f:\,G\to\R$ be a bounded harmonic function for $P_\eta^\fe$, i.e.,
	\begin{equation}
		E_\eta^\fe[f(\Theta^\fe(t))] = f(\eta)\quad\text{ for all }\eta\in G,\, t\geq 0.
	\end{equation}
	Then $f$ is constant.
\end{lemma}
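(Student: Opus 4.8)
The plan is to reduce the harmonicity of $f$ on the strip $G=\Z^d\times\{0,1\}$ to the harmonicity of a single function $g$ on $\Z^d$ for the migration random walk, and then invoke the classical fact that an irreducible random walk on $\Z^d$ has only constant bounded harmonic functions.

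First I would pass from the continuous-time identity $E^\fe_\eta[f(\Theta^\fe(t))]=f(\eta)$ to a statement about the embedded jump chain. By Assumption~\ref{assumpt1} the total exit rate $r(\eta)$ out of any state is finite and strictly positive ($r((i,0))=\lambda K_i\in(0,\infty)$ and $r((i,1))=\lambda+c\in(0,\infty)$), so $\Theta^\fe$ is a well-posed pure-jump Markov process with jump law $\Pi(\eta,\cdot)$. Fixing $t>0$ and splitting $E^\fe_\eta[f(\Theta^\fe(t))]$ according to whether the first jump has occurred by time $t$, the strong Markov property and the fact that $P_uf=f$ \emph{for every} $u\ge0$ turn the post-jump contribution into $\sum_\xi\Pi(\eta,\xi)f(\xi)$, so that $f(\eta)=e^{-r(\eta)t}f(\eta)+(1-e^{-r(\eta)t})\sum_\xi\Pi(\eta,\xi)f(\xi)$; since $r(\eta)>0$ this forces $f=\Pi f$, i.e.\ $f$ is harmonic for the jump chain. (Interchanging the expectation with the countable sum for $\eta=(i,1)$ is justified by boundedness of $f$.)

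Next I would evaluate $f=\Pi f$ at the two types of states, reading the rates off \eqref{eqn:trans-rates-of-theta}. From $(i,0)$ the unique available transition is to $(i,1)$ (rate $\lambda K_i>0$), so $\Pi((i,0),\cdot)=\delta_{(i,1)}$ and hence $f(i,0)=f(i,1)$ for every $i\in\Z^d$; thus $f$ does not depend on the active/dormant coordinate and we may set $g(i):=f(i,0)=f(i,1)$. Evaluating $f=\Pi f$ at $(i,1)$, where the particle moves to $(i,0)$ with probability $\lambda/(\lambda+c)$ and to $(j,1)$ with probability $a(0,j-i)/(\lambda+c)$, and substituting $f(i,0)=f(i,1)$, the dormant term cancels and one is left with $c\,g(i)=\sum_{j\neq i}a(0,j-i)\,g(j)$ for all $i\in\Z^d$. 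Equivalently, $g$ is a bounded harmonic function for the random walk on $\Z^d$ with step kernel $p(i,j):=a(0,j-i)/c$ for $j\neq i$, which is irreducible by Assumption~\ref{assumpt1}(1) with $c\in(0,\infty)$ (finiteness from Assumption~\ref{assumpt1}(3), positivity from irreducibility).

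Finally I would invoke the Choquet--Deny theorem: an irreducible random walk on the abelian group $\Z^d$ admits only constant bounded harmonic functions; hence $g$, and therefore $f$, is constant. (Under the standing Assumption~\ref{assumpt:clustering-env} the walk is recurrent and this is elementary --- $g(S_n)$ is a bounded martingale, hence a.s.\ convergent, and recurrence pins its limit, and then $g$, to a single value; alternatively, without recurrence, the energy bound $\sum_nE[(g(S_{n+1})-g(S_n))^2]<\infty$ combined with a Green's-function estimate using irreducibility gives $g(y+S_n)-g(S_n)\to0$ a.s.\ for each fixed $y$, whence $g(y)=E[\lim_n g(y+S_n)]=E[\lim_n g(S_n)]=g(0)$.) The only step reaching beyond the immediate framework is this last one; all of steps one through three are a short computation with the transition rates, so the bulk of the write-up should go into presenting them cleanly.
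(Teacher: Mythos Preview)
Your proof is correct and follows essentially the same route as the paper: reduce to $f(i,0)=f(i,1)$, observe that $g(i):=f(i,1)$ is bounded harmonic for the irreducible translation-invariant kernel $a(\cdot,\cdot)$, and conclude by Choquet--Deny. The only cosmetic difference is that the paper passes from $P_tf=f$ to the local equations by setting the generator $\mathcal{J}^\fe f$ equal to zero, whereas you obtain the equivalent identity $\Pi f=f$ for the embedded jump chain via a first-jump decomposition; both devices yield exactly the same two equations at $(i,0)$ and $(i,1)$, so the arguments are interchangeable.
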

\begin{proof}
	Let {\corrected$\mathcal{J}_\fe$} be the infinitesimal generator of the process $\Theta^\fe$. The action of {\corrected$\mathcal{J}_\fe$} on $f$ can be written in the following concise expression:
	\begin{equation}
		({\corrected \mathcal{J}_\fe} f)(i,\alpha):= (\alpha\lambda +(1-\alpha)\lambda K_i) [f(i,1-\alpha)-f(i,\alpha)]
		+\alpha\sum_{j\in\Z^d} a(i,j)[f(j,\alpha)-f(i,\alpha)],
	\end{equation}
	where $(i,\alpha)\in G$. Since $f$ is harmonic, $({\corrected\mathcal{J}_\fe} f)\equiv 0$ and, using the above, we have $f(i,\alpha) = f(i,1-\alpha)$ for all $(i,\alpha)\in G$, which in turn implies that the function $i\mapsto f(i,1)$ is harmonic for $a(\cdot\,,\cdot)$. Applying the Choquet-Deny theorem to the irreducible and translation-invariant kernel $a(\cdot\,,\cdot)$, we get the result.
\end{proof}
By using the duality relation stated in Theorem~\ref{thm:duality-relation} and exploiting the clustering criterion given in \cite[Theorem~3.17]{HN01}, we obtain that coalescence of two dual particles with probability 1 is equivalent to coalescence of any number of dual particles with probability 1.
\begin{theorem}{\bf [Lineage consistency]}
	\label{thm:coalsence-consistency}
	Let $\P^\varphi_\fe$ denote the law of the dual process $\sZ_*$ started at $\varphi:=(n_i,m_i)_{i\in\Z^d}\in\sX_*$ and evolving in environment $\fe:=(N_i,M_i)_{i\in\Z^d}$. Let $\tau$ be first time when all particles have coalesced into a single particle in the dual process, i.e.,
	\begin{equation}
		\label{eqn:defn-coalescence-time}
		\tau := \inf\{t\geq 0\,:\,|\sZ_*(t)|=1\},
	\end{equation}
	where $\displaystyle|\varphi|:=\sum_{i\in\Z^d}(n_i+m_i)$ is the total number of initial dual particles. Then the following are equivalent:
	\begin{enumerate}[\rm(a)]
		\item $\P^\varphi_\fe(\tau<\infty)=1$ for all $\varphi\in\sX_*$ with $|\varphi|=2$.
		\item $\P^\varsigma_\fe(\tau<\infty)=1$ for all $\varsigma\in\sX_*$ with $|\varsigma|\geq 2$.
	\end{enumerate} 
\end{theorem}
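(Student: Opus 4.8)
The implication (b) $\Rightarrow$ (a) is immediate, since any $\varphi$ with $|\varphi|=2$ has $|\varphi|\geq 2$; the plan is to prove (a) $\Rightarrow$ (b). The structural fact I would exploit is that, by \cite[Theorem~3.17]{HN01}, hypothesis (a) is \emph{precisely} the criterion for $\sZ$ to be in the clustering regime. Hence, once (a) is assumed, Theorem~\ref{thm:domain-of-attr-cluster} and Corollary~\ref{coro:constant-density-at-infinity} become available, and the idea is to read off $n$-particle coalescence of the dual from the (now known) long-time behaviour of $\sZ$ via the duality relation \eqref{eqn:duality-relation}, applied to a carefully chosen product initial law.

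Concretely, I would fix $\theta\in(0,1)$ and start $\sZ$ from $\mu_\theta:=\bigotimes_{i\in\Z^d}\mathrm{Binomial}(N_i,\theta)\otimes\mathrm{Binomial}(M_i,\theta)\in\mathcal{P}(\sX)$, under which every individual in the spatial system is, independently, of type $\heartsuit$ with probability $\theta$. Two elementary properties of $\mu_\theta$ will be used. First, the factorial-moment identity $\E[\binom{X}{n}]=\binom{N}{n}\theta^{n}$ for $X\sim\mathrm{Binomial}(N,\theta)$, together with independence across colonies and layers, gives
\[
\int_\sX D^\fe(U,V)\,\d\mu_\theta(U)=\theta^{|V|},\qquad V=(n_i,m_i)_{i\in\Z^d}\in\sX_*,\ \ |V|:=\textstyle\sum_i(n_i+m_i).
\]
Second, $\int_\sX \tfrac{X_i}{N_i}\,\d\mu_\theta=\int_\sX \tfrac{Y_i}{M_i}\,\d\mu_\theta=\theta$ for every $i\in\Z^d$, so the hypothesis of Corollary~\ref{coro:constant-density-at-infinity} holds with $\theta_\fe=\theta$, whence the time-$t$ distribution $\mu_t^\fe$ of $\sZ(t)$ started from $\mu_\theta$ satisfies $\mu_t^\fe\to(1-\theta)\delta_{\mathbf 0}+\theta\,\delta_\fe$ weakly as $t\to\infty$.

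With this in hand I would fix $V\in\sX_*$ with $|V|\geq 2$ and integrate the duality relation \eqref{eqn:duality-relation} against $\mu_\theta$. Fubini's theorem (everything lies in $[0,1]$) together with the first property above yields
\[
\int_\sX D^\fe(U,V)\,\d\mu_t^\fe(U)=\E_*^{V}\big[\theta^{\,|\sZ_*(t)|}\big],\qquad t\geq 0,
\]
where $\E_*^{V}$ is the expectation for $\sZ_*$ started at $V$ (i.e.\ under $\P^V_\fe$). Now let $t\to\infty$ on both sides. On the left, $D^\fe(\cdot\,,V)$ is bounded and continuous on the compact space $\sX$ (it depends on only finitely many integer coordinates of its argument), so by the weak convergence of $\mu_t^\fe$ the left-hand side converges to $(1-\theta)D^\fe(\mathbf 0,V)+\theta\,D^\fe(\fe,V)=\theta$, using $D^\fe(\fe,\cdot)\equiv 1$ while $D^\fe(\mathbf 0,V)=0$ for nonempty $V$. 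On the right, the number of dual particles $t\mapsto|\sZ_*(t)|$ is non-increasing (particles only coalesce), integer-valued and bounded below by $1$, hence converges $\P^V_\fe$-a.s.\ to a limit $L\in\{1,\dots,|V|\}$, and bounded convergence gives $\E_*^{V}[\theta^{L}]=\theta=\theta^{1}$. Since $\theta\in(0,1)$ we have $\theta^{L}\leq\theta$ $\P^V_\fe$-a.s.\ with equality iff $L=1$, so $L=1$ almost surely; as $|\sZ_*(\cdot)|$ is integer-valued, non-increasing and càdlàg, this means $|\sZ_*(t)|=1$ for all large $t$, i.e.\ $\tau<\infty$ $\P^V_\fe$-a.s. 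Since $V$ with $|V|\geq 2$ was arbitrary, this is exactly (b).

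The one point requiring care is the opening reduction: one must invoke \cite[Theorem~3.17]{HN01} only in the direction ``$\sZ$ clusters whenever two dual particles coalesce a.s.'', which is established from the two-particle dual alone, so no circularity arises, and then one must check that the product law $\mu_\theta$ genuinely sits in the domain of attraction of the mono-type equilibria — which is exactly what Corollary~\ref{coro:constant-density-at-infinity} delivers. The remaining ingredients (the $\theta^{|V|}$ identity, the monotonicity of $|\sZ_*(\cdot)|$, and the elementary implication $\E_*^V[\theta^L]=\theta\Rightarrow L\equiv 1$) are routine, so I do not expect a genuine obstacle beyond assembling these pieces.
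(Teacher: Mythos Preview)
Your approach is essentially the paper's own: start $\sZ$ from the product binomial law $\mu_\theta$, use the factorial-moment identity to get $\int D^\fe(U,V)\,\d\mu_\theta(U)=\theta^{|V|}$, pass through duality, and conclude $\E^{V}_\fe[\theta^{L}]=\theta$ with $L:=\lim_{t\to\infty}|\sZ_*(t)|$, hence $L=1$ a.s. The monotonicity argument and the final implication are handled identically. Your treatment of (b) $\Rightarrow$ (a) is in fact cleaner than the paper's (the paper takes an unnecessary detour through irreducibility; as you observe, the implication is trivial because (b) strengthens (a)).

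There is, however, a genuine circularity in your write-up that you did not catch. You invoke Corollary~\ref{coro:constant-density-at-infinity} to obtain $\mu_t^\fe\to(1-\theta)\delta_{\mathbf 0}+\theta\,\delta_\fe$. But Corollary~\ref{coro:constant-density-at-infinity} is proved via Theorem~\ref{thm:domain-of-attr-cluster}, and the proof of Theorem~\ref{thm:domain-of-attr-cluster} in Section~\ref{sec-proof-of-main-theorems} explicitly uses the implication (a)~$\Rightarrow$~(b) of Theorem~\ref{thm:coalsence-consistency} (to guarantee $\tau<\infty$ a.s.\ when $|\varphi|\geq 2$). So you are assuming what you wish to prove. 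Your closing paragraph checks for circularity only in \cite[Theorem~3.17]{HN01}, but the loop actually runs through the present paper.

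The fix is exactly what the paper does: do \emph{not} route through Corollary~\ref{coro:constant-density-at-infinity}. For the specific product-binomial initial law $\mu_\theta$, convergence of $\sZ(t)$ to an equilibrium $\nu_\theta$ is already established in \cite[Theorem~3.14]{HN01}, and the identification $\nu_\theta=(1-\theta)\delta_{\mathbf 0}+\theta\,\delta_\fe$ under hypothesis (a) follows from \cite[Theorem~2.3]{HN01}. Both of these are proved in the companion paper without any appeal to $n$-particle coalescence, so the argument becomes non-circular. With that single citation swap your proof is correct and coincides with the paper's.
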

\begin{proof}
	By irreducibility of the dual process $\sZ_*$, it suffices to prove the equivalence of the two statements for fixed $\varphi,\varsigma\in\sX_*$ such that $|\varphi|=2$ and $n:=|\varsigma|\geq 2$. If $n=2$, then there is nothing to prove. So assume that $n>2$. It is straightforward to see from irreducibility and the Markov property of $\sZ_*$ that if $\P^\varphi_\fe(\tau=\infty)>0$, then $\P^\varsigma_\fe(\tau=\infty)\geq \P^\varsigma_\fe(Z^*(t)=\varphi)\P^\varphi_\fe(\tau=\infty)>0$. Hence (b) implies (a).
	
	To prove that (a) implies (b), assume $\P^\varphi_\fe(\tau<\infty)=1$ and, for $t\geq 0$, set $I_t:=|\sZ_*(t)|$. Note that, since $\sZ_*$ is a coalescent process, $I_t$ is an integer-valued bounded random variable that is {\corrected non-increasing} in $t$ a.s. Thus, $I:=\lim\limits_{t\to\infty} I_t$ exists a.s.\ and  it is enough to prove that $I=1$ a.s. To this purpose, {\corrected let} $\theta\in(0,1)$ be fixed arbitrarily, and let $\sZ$ be the spatial process started at the initial distribution $\mu_\theta^\fe$ given by
	\begin{equation}
		\mu_\theta^\fe := \bigotimes_{i\in\Z^d}\text{Binomial}(N_i,\theta)\otimes\text{Binomial}(M_i,\theta).
	\end{equation}
	By \cite[Theorem 3.14]{HN01}, the process $\sZ$ converges to an equilibrium $\nu_\theta$. Also, by our assumption that $\P^\varphi_\fe(\tau<\infty)=1$ and \cite[Theorem 2.3]{HN01}, we have
	\begin{equation}
		\label{eqn:trivial-equilibrium}
		\nu_\theta = \theta \delta_{\fe}+(1-\theta) \delta_{\mathbf{0}}.
	\end{equation}
	Furthermore, if $D^\fe(\cdot\,,\cdot)$ is the duality function in \eqref{eqn:defn-duality-function}, then combining \cite[Theorem 3.14]{HN01} and the above we get
	\begin{equation}
		\begin{aligned}
			\theta &= \E_{\nu_\theta}\big[D^\fe(\sZ(0),\varsigma)\big]=\lim\limits_{t\to\infty}\E^{\varsigma}_\fe\big[\theta^{I_t}\big]= \E^{\varsigma}_\fe\big[\theta^I\big]\quad(\text{bounded convergence}),
		\end{aligned}
	\end{equation}
	which implies that $\E^\varsigma_\fe\big[\theta(1-\theta^{I-1})\big]=0$. Since $\theta\in(0,1)$, we have that $I=1$ almost surely.
\end{proof}

\subsection{Proofs: clustering in fixed and random environment}
\label{sec-proof-of-main-theorems}
We are now ready to prove the two main theorems.
\begin{proof}[Proof of Theorem~{\rm\ref{thm:domain-of-attr-cluster}}]
	To show that (a) implies (b), suppose that $\mu_t^\fe$ converges weakly to $\nu\in\mathcal{P}(\sX)$ as $t\to\infty$. Let $\theta_\fe:=\E_\nu\big[\tfrac{X_0^\fe(0)}{N_0}\big]\in[0,1]$ be fixed. Since the system is in the clustering regime by assumption, $\delta_{\mathbf{0}}$ and $\delta_{\fe}$ are the only two extremal equilibria for the process $\sZ$. Hence, we must have that \begin{equation}
		\label{eqn:convergence-to-clustering}
		\nu=(1-\theta_\fe)\delta_{\mathbf{0}}+\theta_\fe\delta_{\fe}.
	\end{equation}
	We show that $f\equiv\theta_\fe$, which will settle (b) along with the last statement of the theorem. To this end, for each $t\geq 0$, let $f_t\,:G\to[0,1]$ be defined as
	\begin{equation}
		\label{eqn:defn-of-time-t-density}
		f_t(\eta) := \sum_{(j,\beta)\in G}p_t^\fe(\eta,(j,\beta))\int_{\sX}\big[\beta\tfrac{X_j}{N_j}+(1-\beta)\tfrac{Y_j}{M_j}\big]\,\d\mu^\fe\{(X_k,Y_k)_{k\in\Z^d}\},\quad\eta\in G.
	\end{equation} 
	Let $\eta=(i,\alpha)\in G$ be arbitrary, and let $\sZ_*:=(\sZ_*(t))_{t
		\geq 0}$ be the dual process started at $\vec{\delta}_{\eta}:= \mathbf{1}_{\alpha=1}\,\vec{\delta}_{i,A}+\mathbf{1}_{\alpha=0}\,\vec{\delta}_{i,D}$, where for each $i\in\Z^d$ the configurations $\vec{\delta}_{i,A}, \vec{\delta}_{i,D}\in\sX_*$ are defined as in \eqref{kron_del}. In other words, $\vec{\delta}_{\eta}$ is the configuration with a single dual particle located at $i\in\Z^d$ with state $\alpha$. Recall from Definition~\ref{defn:single-particle-dual} that the time-$t$ transition kernel $p_t^\fe(\cdot\,,\cdot)$ of the single-particle dual process $\Theta^\fe$ is defined as
	\begin{equation}
		p_t^\fe(\eta,\zeta) := P^\fe_\eta(\Theta^\fe(t)=\zeta),\quad \eta,\zeta\in G.
	\end{equation}
	Using Lemma~\ref{lemma:relation-between-theta-dual} and appealing to the monotone convergence theorem, we get from \eqref{eqn:defn-of-time-t-density} that 
	\begin{equation}
		\label{eqn:integral-form-time-t-density}
		f_t(\eta) = \int_{\sX}\E^{\vec{\delta}_\eta}_\fe\big[D^\fe(z, \sZ_*(t))\big]\,\d\mu^\fe\{z\},
	\end{equation}
	where the expectation is w.r.t.\ the law of the dual process $\sZ_*$, and $D^\fe(\cdot\,,\cdot)$ is the duality function in \eqref{eqn:defn-duality-function}. Furthermore, applying the duality relation between $\sZ$ and $\sZ_*$ to the above identity, we get
	\begin{equation}
		f_t(\eta) = \E_{\mu^\fe}\big[D^\fe(\sZ(t),\vec{\delta}_\eta)\big] = \int_{\sX}D^\fe(z,\vec{\delta}_\eta)\,\d\mu_t^\fe\{z\}.
	\end{equation}
	However, since $\mu_t^\fe\overset{weak}{\longrightarrow}\nu$ as $t\to\infty$ {\corrected and the map $z\mapsto D^\fe(z,\vec{\delta}_\eta)$ is bounded}, combining the above with \eqref{eqn:convergence-to-clustering}, we see that
	\begin{equation}
		f(\eta)=\lim\limits_{t\to\infty}f_t(\eta) = \int_{\sX}D^\fe(z,\vec{\delta}_\eta)\,\d\nu\{z\} = \theta_\fe,
	\end{equation}
	and hence the claim is proved.
	
	To prove the converse, for $t\geq 0$, let $f_t\,: G\to[0,1]$ be as in \eqref{eqn:defn-of-time-t-density}. Applying Fubini's theorem to \eqref{eqn:integral-form-time-t-density}, for any $\eta\in G$ we have
	\begin{equation}
		f_t(\eta) = \E^{\vec{\delta}_\eta}_\fe\Big[\int_{\sX}D^\fe(z,\sZ_*(t))\,\d\mu^\fe\{z\}\Big].
	\end{equation}
	Using the Markov property of $\sZ_*$, we note that, for $t,s\geq 0$ and $\eta\in G$,
	\begin{equation}
		f_{s+t}(\eta)=\sum_{\zeta\in G}p_s^\fe(\eta,\zeta)f_t(\zeta).
	\end{equation}
	Since by assumption $f(\eta) = \lim\limits_{t\to\infty} f_t(\eta)$ exists for any $\eta\in G$, letting $t\to\infty$ in the above identity, we obtain
	\begin{equation}
		\begin{aligned}
			f(\eta) &= \lim\limits_{t\to\infty}\sum_{\zeta\in G}p_s^\fe(\eta,\zeta)f_t(\zeta)
			=\sum_{\zeta\in G}p_s^\fe(\eta,\zeta)\,\big[\lim\limits_{t\to\infty} f_t(\zeta)\big]\quad\text{(dominated convergence)}\\
			&=\sum_{\zeta\in G}p_s^\fe(\eta,\zeta)f(\zeta)= E_\eta^\fe\big[f(\Theta^\fe(s))\big].
		\end{aligned}
	\end{equation}
	Hence, in particular, $f$ is harmonic for the process $(\Theta^\fe(t))_{t\geq 0}$ and thus, by Lemma~\ref{lemma:contant-harmonic-single-particle}, $f\equiv\theta_\fe$ for some $\theta_\fe\in [0,1]$. It only remains to show that $\mu_t^\fe$ converges weakly as $t\to\infty$.
	This is equivalent to showing that, for any $\varphi\in\sX_*$,
	$\lim\limits_{t\to\infty}\E_{\mu^\fe}\big[D^\fe(\sZ(t),\varphi)\big]$ exists. Because $\mathcal{P}(\sX)$ is compact (as $\sX$ is) in the topology of weak convergence, $(\mu_t^\fe)_{t\geq 0}$ is tight. Finally, the existence of the limit ensures the convergence of the associated finite-dimensional distributions, because the family of functions $\{D^\fe(\,\cdot\,,\varphi)\,:\,\varphi\in\sX_*\}$ fixes the mixed moments of the finite-dimensional distributions of $\sZ$ (see \cite[Proposition 5.4]{HN01}), and therefore is convergence determining. Let $\varphi=(n_i,m_i)_{i\in\Z^d}\in\sX_*$ be fixed, and $\sZ_*$ be the dual process started at $\varphi$. First note that if $|\varphi|=\sum_{i\in\Z^d}(n_i+m_i)=1$, then the limit exists and equals $\theta_\fe$ by our assumption. Indeed, if $|\varphi|=1$, then $\varphi=\vec{\delta}_\zeta$ for some $\zeta\in G$. As a consequence of duality and \eqref{eqn:integral-form-time-t-density}, we see that $\E_{\mu^\fe}\big[D^\fe(\sZ(t),\varphi)\big]=f_t(\zeta)$ and hence
	\begin{equation}
		\begin{aligned}
			\lim\limits_{t\to\infty}\E_{\mu^\fe}\big[D^\fe(\sZ(t),\varphi)\big]=\lim\limits_{t\to\infty}f_t(\zeta)=f(\zeta)=\theta_\fe.
		\end{aligned}
	\end{equation}
	Now, let us fix $\varphi\in\sX_*$ such that $|\varphi|\geq 2$. Since the system is in the clustering regime, by virtue of \cite[Theorem 2.3]{HN01}, condition (a) in Theorem~\ref{thm:coalsence-consistency} is satisfied. Hence from part (b) of Theorem~\ref{thm:coalsence-consistency} it follows that $\tau <\infty$ a.s., where $\tau:=\inf\{t\geq 0\,:\,|\sZ_*(t)| = 1\}$. Using duality and the strong Markov property of the dual process, we see that
	\begin{equation}
		\begin{aligned}
			&\lim\limits_{t\to\infty}\E_{\mu^\fe}\Big[D^\fe(\sZ(t),\varphi)\Big] \overset{Fubini}{=}\lim\limits_{t\to\infty}\E^\varphi_\fe\Big[\int_{\sX}D^\fe(z,\sZ_*(t))\,\d\mu^\fe\{z\}\Big]\\
			&\qquad =\lim\limits_{t\to\infty}\E^\varphi_\fe\Big[\int_{\sX}D^\fe(z,\sZ_*(t))\,\d\mu^\fe\{z\}; \tau\leq t\Big]\\
			&\qquad\qquad\quad+\lim\limits_{t\to\infty}\underbrace{\E^\varphi_\fe\Big[\int_{\sX}D^\fe(z,\sZ_*(t))\,\d\mu^\fe\{z\}\mid \tau>t\Big]}_{\leq 1}\P^\varphi_\fe(\tau>t)\\
			&\qquad =\lim\limits_{t\to\infty}\E^{\varphi}_\fe\Big[\E^{\sZ_*(\tau)}_\fe\Big[\int_{\sX}D^\fe(z,\sZ_*(t-\tau))\,\d\mu^\fe\{z\}\Big];  \tau\leq t\Big]\\
			&\qquad=\lim\limits_{t\to\infty}\E^{\varphi}_\fe\Big[\sum_{\zeta\in G}f_{t-\tau}(\zeta)\mathbf{1}_{\{\sZ_*(\tau)=\vec{\delta}_\zeta\}};  \tau\leq t\Big],
		\end{aligned}
	\end{equation}
	where we use that the second term after the first equality converges to 0 because $\tau<\infty$ a.s., and the last equality follows from \eqref{eqn:integral-form-time-t-density} and the fact that $\sZ_*(\tau)=\vec{\delta}_\zeta$ for some $\zeta\in G$. Finally, by an application of the dominated convergence theorem, we get
	\begin{equation}
		\begin{aligned}
			&\lim\limits_{t\to\infty}\E_{\mu^\fe}\Big[D^\fe(\sZ(t),\varphi)\Big]\\
			&\qquad=\E^\varphi_\fe\Big[\sum_{\zeta\in G}\big(\lim\limits_{t\to\infty}f_{t-\tau}(\zeta)\big)\mathbf{1}_{\{\sZ_*(\tau)=\vec{\delta}_\zeta\}};  \tau<\infty\Big]\\
			&\qquad=\E^\varphi_\fe\Big[\sum_{\zeta\in G}f(\zeta)\mathbf{1}_{\{\sZ_*(\tau)=\vec{\delta}_\zeta\}};  \tau<\infty\Big]=\theta_\fe\,\P^\varphi_\fe(\tau<\infty)\quad\text{(since $f\equiv\theta_\fe$)}\\
			&\qquad=\theta_\fe.
		\end{aligned}
	\end{equation}
	This shows that there exists $\nu\in\mathcal{P}(\sX)$ such that $\mu_t^\fe$ converges weakly to $\nu$ as $t\to\infty$. Since the system clusters by assumption, we must have
	\begin{equation}
		\nu = (1-\theta_\fe)\delta_{\mathbf{0}}+\theta_\fe\delta_{\fe}.
	\end{equation}
\end{proof}

\medskip\noindent

\begin{proof}[Proof of Corollary~{\rm \ref{coro:constant-density-at-infinity}}]
	The proof basically exploits Theorem~\ref{thm:domain-of-attr-cluster} and the fact that the particle associated to the process $\Theta^\fe$ eventually leaves any finite region of the state space $G=\Z^d\times\{0,1\}$ with probability 1. It suffices to prove that condition (b) in Theorem~\ref{thm:domain-of-attr-cluster} is satisfied. Let $f\,:\Z^d\times\{0,1\}\to[0,1]$ be the map
	\begin{equation}
		f(i,\alpha):=\alpha\E_{\mu^\fe}\big[\tfrac{X_i^\fe(0)}{N_i}\big]
		+(1-\alpha)\E_{\mu^\fe}\big[\tfrac{Y_i^\fe(0)}{M_i}\big],\quad(i,\alpha)\in\Z^d\times\{0,1\},
	\end{equation}
	and let $\epsilon > 0$ be arbitrary. By \eqref{eqn:homogeneous-density-at-infinity}, there exists $N\in\N$ such that, for all $i\in\Z^d,\,{\corrected\|i\|}> N$ and $\alpha\in\{0,1\}$, $|f(i,\alpha)-\theta_\fe|<\epsilon$.
	Thus, if $p_t^\fe(\cdot\,,\cdot)$ is the time-$t$ transition kernel of the process $(\Theta^\fe(t))_{t\geq 0}$ in Definition~\ref{defn:single-particle-dual}, then for any $\eta\in G$ and $t\geq 0$,
	\begin{equation}
		\label{eqn:limit-existence-clustering-domain}
		\begin{aligned}
			&\left|\sum_{(j,\beta)\in G}p_t^\fe(\eta,(j,\beta))\Big\{\beta\,\E_{\mu^\fe}\big[\tfrac{X_j^\fe(0)}{N_j}\big]+(1-\beta)\E_{\mu^\fe}\big[\tfrac{Y_j^\fe(0)}{M_j}\big]\Big\}-\theta_\fe\right|\\
			&\qquad \leq\sum_{\stackrel{(j,\beta)\in G,}{{\corrected\|j\|}\leq N}}p_t(\eta,(j,\beta))\underbrace{\big|f(j,\beta)-\theta_\fe\big|}_{\leq 2}+\sum_{\stackrel{(j,\beta)\in G,}{{\corrected\|j\|}> N}}p_t^\fe(\eta,(j,\beta))\underbrace{\big|f(j,\beta)-\theta_\fe\big|}_{\leq \epsilon}\\
			&\qquad\leq2\,P_\eta^\fe(\Theta^\fe(t)\in \Lambda_N\times\{0,1\})+\epsilon\,P_\eta^\fe(\Theta^\fe(t)\notin \Lambda_N\times\{0,1\}),
		\end{aligned}
	\end{equation}
	where $\Lambda_N:=\Z^d\cap[0,N]^d$, and $P_\eta^\fe$ denotes the law of $(\Theta^\fe(t))_{t\geq 0}$ started at $\eta$. Since $\Lambda_N$ is finite, $\lim\limits_{t\to\infty}P_\eta^\fe(\Theta^\fe(t)\in \Lambda_N\times\{0,1\})=0$, and so letting $t\to\infty$ in \eqref{eqn:limit-existence-clustering-domain}, we get
	\begin{equation}
		\limsup\limits_{t\to\infty}\left|\sum_{(j,\beta)\in G}p_t^\fe(\eta,(j,\beta))\Big\{\beta\,\E_{\mu^\fe}\big[\tfrac{X_j^\fe(0)}{N_j}\big]+(1-\beta)\E_{\mu^\fe}\big[\tfrac{Y_j^\fe(0)}{M_j}\big]\Big\}-\theta_\fe\right|\leq \epsilon.
	\end{equation}
	As $\epsilon$ is arbitrary, we see that
	\begin{equation}
		\lim\limits_{t\to\infty}\sum_{(j,\beta)\in G}p_t^\fe(\eta,(j,\beta))f(j,\beta)=\theta_\fe
	\end{equation}
	and hence the claim follows from Theorem~\ref{thm:domain-of-attr-cluster}.
\end{proof}
\begin{proof}[Proof of Theorem~{\rm\ref{thm:random-env-clustering}}]We exploit Theorem~\ref{thm:domain-of-attr-cluster} and the homogenization result in Corollary~\ref{corollary:limit-from-any-start-point}.
We see that, because of conditions (1)--(2) in Assumption~\ref{assumpt:clustering-env} and ellipticity of the environments $\fe\in\cE_\fK$, the process $\sZ$ is in the clustering regime for every environment $\fe\in\cE_\fK$. Also, by virtue of Corollary~\ref{corollary:limit-from-any-start-point} and the assumption in \eqref{eqn:consistent-initial-condition} on initial distributions, there exists $B\in\Sigma$ such that $\bP(B)=1$, and for all $\fe\in B$ condition (b) of Theorem~\ref{thm:domain-of-attr-cluster} holds. Furthermore, we see from Corollary~\ref{corollary:limit-from-any-start-point}, that the limiting value in that condition is independent of the environment $\fe$, and is given by \eqref{eqn:value-of-fixation-probability}. Hence the result follows.
\end{proof}
\appendix
\section{Proof of stationarity and law of large numbers}
\label{sec-stationarity-n-large-numbers}
In this section we prove Theorem~\ref{thm:invariant-distribution-of-W}. As an application, we also prove a strong law of large numbers stated later in Theorem~\ref{thm:law-of-large-numbers}.
\subsection{Stationary distribution of environment process}
\label{sec-stationarity-of-aux-env-proc}
\begin{proof}[Proof of Theorem~{\rm\ref{thm:invariant-distribution-of-W}}]
	We first prove part (1) of the theorem. To prove stationarity of $W$ under $\Q$, it suffices to show that, for any bounded measurable $f\in\mathcal{F}_b(\Omega_\fK)$,
	\begin{equation}
		\int_{\Omega_\fK}\fR f(\fe,\alpha)\,\d\Q(\fe,\alpha) = \int_{\Omega_\fK}f(\fe,\alpha)\,\d\Q(\fe,\alpha),
	\end{equation}
	where $\fR$ is the Markov kernel operator given in \eqref{eqn:transition-kernel-environment-chain}. Let $\theta:=\tfrac{1}{1+\bar{\E}[M_0/N_0]}$ and $q_s$, $\hat{p}(\cdot)$, $\omega = (\omega(k))_{k\in\Z^d}$ be as in \eqref{eqn:transformation-of-parameters}, where $\omega$ is the only parameter that depends on the realization of the environment $\fe$. In terms of these parameters, from \eqref{eqn:defn-of-invariant-dist} we get that
	\begin{equation}
		\label{eqn:general-integral-of-function}
		\int_{\Omega_\fK} g(\fe,\alpha)\,\d\Q(\fe,\alpha) = \theta\int_{\Omega_\fK}\big[g(\fe,1) + \tfrac{q_s}{\omega(0)}\,g(\fe,0)\big]\,\d\bP(\fe)
	\end{equation}
for any $g\in\mathcal{F}_b(\Omega_\fK)$. Thus, taking $g=\fR f$ in the above equation, we have
	\begin{equation}
		\label{eqn:integral-of-kernel}
		\begin{aligned}
			\int_{\Omega_\fK}\fR f(\fe,\alpha)\,\d\Q(\fe,\alpha) &= \theta\int_{\cE_\fK}\big[\fR f(\fe,1) + \tfrac{q_s}{\omega(0)}\,\fR f(\fe,0)\big]\,\d\bP(\fe)= \theta(I_1+I_2),
		\end{aligned}
	\end{equation}
	where $I_1:=\int_{\cE_\fK}\fR f(\fe,1)\,\d\bP(\fe)$ and $I_2:=\int_{\cE_\fK}\tfrac{q_s}{\omega(0)}\,\fR f(\fe,0)\,\d\bP(\fe)$.
	
	Let us compute $I_1$ and $I_2$ using \eqref{eqn:transition-kernel-environment-chain}:
	\begin{equation}
		\label{eqn:integral-value-of-I-1}
		\begin{aligned}
			I_1 &= q_s\int_{\cE_\fK}f(\fe,0)\,\d\bP(\fe)+(1-q_s)\int_{\cE_\fK}\Big[\sum_{j\in\Z^d}\hat{p}(j)f(T_j\fe,1)\Big]\,\d\bP(\fe)\\
			&=q_s\int_{\cE_\fK}f(\fe,0)\,\d\bP(\fe)+(1-q_s)\sum_{j\in\Z^d}\hat{p}(j)\int_{\cE_\fK}f(T_j\fe,1)\,\d\bP(\fe)\quad\text{(bounded convergence)}\\
			&=q_s\int_{\cE_\fK}f(\fe,0)\,\d\bP(\fe)+(1-q_s)\sum_{j\in\Z^d}\hat{p}(j)\int_{\cE_\fK}f(\fe,1)\,\d\bP(\fe)\quad\text{(translation-invariance of $\bP$)}\\
			&=q_s\int_{\cE_\fK}f(\fe,0)\,\d\bP(\fe)+(1-q_s)\int_{\cE_\fK}f(\fe,1)\,\d\bP(\fe),\quad\text{\Big(using $\sum_{j\in\Z^d}\hat{p}(j)=1\Big).$}
		\end{aligned}
	\end{equation}
	Similarly,
	\begin{equation}
		\label{eqn:integral-value-of-I-2}
		\begin{aligned}
			I_2 &= \int_{\cE_\fK}\tfrac{q_s}{\omega(0)}\,\fR f(\fe,0)\,\d\bP(\fe) =q_s\int_{\cE_\fK}[f(\fe,1)-f(\fe,0)]\,\d\bP(\fe)+\int_{\cE_\fK}\tfrac{q_s}{\omega(0)}f(\fe,0)\,\d\bP(\fe).
		\end{aligned}
	\end{equation}
	Finally, adding \eqref{eqn:integral-value-of-I-1}--\eqref{eqn:integral-value-of-I-2} and using \eqref{eqn:general-integral-of-function}--\eqref{eqn:integral-of-kernel},
	we get
	\begin{equation}
		\begin{aligned}
			\int_{\Omega_\fK}\fR f(\fe,\alpha)\,\d\Q(\fe,\alpha) &= \theta(I_1+I_2)= \theta\int_{\Omega_\fK}\big[f(\fe,1) + \tfrac{q_s}{\omega(0)}\,f(\fe,0)\big]\,\d\bP(\fe)\\
			&= \int_{\Omega_\fK}f(\fe,\alpha)\,\d\Q(\fe,\alpha),
		\end{aligned}
	\end{equation}
	which proves the claim.
	
	Next we proceed to prove ergodicity of $W$ under the stationary law $\Q$. It suffices to show (see e.g. \cite{kallenberg97}) that if $A\in \Sigma\otimes2^{\{0,1\}}$ satisfies $\fR\mathbf{1}_A = \mathbf{1}_A\ \Q${\corrected-a.s.},\ then $\Q(A)\in\{0,1\}$. Thus, let us fix a measurable $A\subseteq\Omega_\fK$ such that
	\begin{equation}
		\label{eqn:harmonic-equality-of-A}
		\fR\mathbf{1}_A(\fe,\alpha) = \mathbf{1}_A(\fe,\alpha),\quad\text{ for all }(\fe,\alpha)\in B,
	\end{equation} 
	where $B\subseteq\Omega_\fK$ is measurable with $\Q(B)=1$. Define $A_0,A_1\in\Sigma$ as
	\begin{equation}
		\begin{aligned}
			\label{eqn:projection-of-set-on-layers}
			A_0 &:= \{\fe\,:\,(\fe,0)\in A\},\ A_1:=\{\fe\,:\,(\fe,1)\in A\}.
		\end{aligned}
	\end{equation}
	By Lemma~\ref{lemma:equivalence-of-pi-P}, we can find $B^\prime\in\Sigma$ such that \begin{equation}
		\label{eqn:full-support-on-both-layers}
		\bP(B^\prime)=1,\ B^\prime\times\{0,1\}\subseteq B.
	\end{equation}
	Using \eqref{eqn:transition-kernel-environment-chain}, \eqref{eqn:harmonic-equality-of-A} and \eqref{eqn:full-support-on-both-layers}, we get that, for all $\fe\in B^\prime$,
	\begin{equation}
		\begin{aligned}
			&q_s\mathbf{1}_A(\fe,0) + (1-q_s)\sum_{j\in\Z^d}\hat{p}(j)\mathbf{1}_A(T_j\fe,1) = \mathbf{1}_A(\fe,1),\\
			&\omega(0)\mathbf{1}_A(\fe,0) + (1-\omega(0))\mathbf{1}_A(\fe,1) = \mathbf{1}_A(\fe,0),
		\end{aligned}
	\end{equation}
	where $\omega$ is defined in terms of $\fe$ as in \eqref{eqn:transformation-of-parameters}. In terms of $A_0,A_1$ given in \eqref{eqn:projection-of-set-on-layers}, for all $\fe\in B^\prime$,
	\begin{equation}
		\label{eqn:expanded-harmonic-equation-of-A}
		\begin{aligned}
			&q_s\mathbf{1}_{A_0}(\fe) + (1-q_s)\sum_{j\in\Z^d}\hat{p}(j)\mathbf{1}_{A_1}(T_j\fe) = \mathbf{1}_{A_1}(\fe),\\
			&(1-\omega(0))\mathbf{1}_{A_1}(\fe) = (1-\omega(0))\mathbf{1}_{A_0}(\fe).
		\end{aligned}
	\end{equation}
	By ellipticity of $\fe\in B^\prime$, we have $\omega(0)<1$, and so the second part of the above equation implies that
	\begin{equation}
		\label{eqn:projection-have-same-mass}
		\mathbf{1}_{A_1}(\fe)=\mathbf{1}_{A_0}(\fe),\quad\fe\in B^\prime.
	\end{equation}
	Integrating the above w.r.t.\ $\bP$ over $B^\prime$ and using \eqref{eqn:full-support-on-both-layers}, we also have
	\begin{equation}
		\label{eqn:both-set-same-mass}
		\bP(A_0) = \bP(A_1).
	\end{equation}
	Note that if we show $\bP(A_1)\in\{0,1\}$, then it follows from \eqref{eqn:both-set-same-mass} that $\Q(A)\in\{0,1\}$. Indeed, from \eqref{eqn:defn-of-invariant-dist} we see that
	\begin{equation}
		\label{eqn:defn-mass-of-event}
		\begin{aligned}
			\Q(A) = \theta \Big[\bP(A_1) + \int_{A_0}\tfrac{M_0}{N_0}\,\d \bP\{(N_k,M_k)_{k\in\Z^d}\}\Big],
		\end{aligned}
	\end{equation}
	where $\theta:=\tfrac{1}{1+\bar{\E}[M_0/N_0]}$. Therefore, if $\bP(A_1)=\bP(A_0)=1$, then
	\begin{equation}
		\begin{aligned}
			\Q(A) = \theta(1 + \bar{\E}[M_0/N_0]) = 1.
		\end{aligned}
	\end{equation}
	Similarly, if $\bP(A_1)=\bP(A_0) = 0$, then by \eqref{eqn:defn-mass-of-event}, trivially $\Q(A)=0$. We prove $\bP(A_1)\in\{0,1\}$ by using ergodicity of $\bP$.
	To this purpose, let us note that \eqref{eqn:projection-have-same-mass}, combined with the first part of \eqref{eqn:expanded-harmonic-equation-of-A} and the fact {\corrected$q_s<1$}, implies
	\begin{equation}
		\label{eqn:invariance-of-A-1}
		\sum_{j\in\Z^d}\hat{p}(j)\mathbf{1}_{A_1}(T_j\fe) = \mathbf{1}_{A_1}(\fe),\quad\fe\in B^\prime.
	\end{equation}
	Define the translation invariant set $B_\text{inv} := \bigcap_{j\in\Z^d}T_j^{-1}(B^\prime)$. By translation invariance of $\bP$ we see that $\bP(B_\text{inv})=\bP(B^\prime)=1$. Also, \eqref{eqn:invariance-of-A-1} holds for all $\fe\in B_\text{inv}$. Let us fix $\fe\in B_\text{inv}$. By translation invariance of $B_\text{inv}$, we see that $T_i\fe\in B_\text{inv}$ for any $i\in\Z^d$ and so, using \eqref{eqn:invariance-of-A-1}, we get
	\begin{equation}
		\sum_{j\in\Z^d}\hat{p}(j)\mathbf{1}_{A_1}(T_jT_i\fe) = \mathbf{1}_{A_1}(T_i\fe)\implies \sum_{j\in\Z^d}\hat{p}(j-i)\mathbf{1}_{A_1}(T_j\fe) = \mathbf{1}_{A_1}(T_i\fe).
	\end{equation}
	In particular, the map $i\mapsto \mathbf{1}_{A_1}(T_i\fe)$ is harmonic for $\hat{p}(\cdot)$. Finally, because of the irreducibility of the migration kernel $a(\cdot\,,\cdot)$ (see Assumption~\ref{assumpt1}), we can apply the Choquet-Deny theorem to the $\hat{p}$-harmonic function $i\mapsto \mathbf{1}_{A_1}(T_i\fe)$ to conclude that
	\begin{equation}
		\mathbf{1}_{A_1}(T_i\fe) = \mathbf{1}_{A_1}(\fe),\quad \forall i\in\Z^d.
	\end{equation}
	In other words, $B_\text{inv}\cap A_1$ is a translation invariant subset of $\cE_\fK$, and so ergodicity of $\bP$ implies $\bP(B_\text{inv}\cap A_1)\in\{0,1\}$. But $\bP(B_\text{inv}\cap A_1)=\bP(A_1)$ because $\bP(B_\text{inv})=1$. This concludes the proof of ergodicity of $W$ w.r.t.\ the law $\Q$.
	
	It remains to prove reversibility of $\Q$ under condition (2) in Assumption~\ref{assumpt:clustering-env}. It is enough to prove that, for $f,g\in \mathcal{F}_b(\Omega_\fK)$,
	\begin{equation}
		\label{eqn:reversibility-criterion}
		\int_{\Omega_\fK} g\,\fR f\,\d\Q = \int_{\Omega_\fK} f\,\fR g\,\d\Q.
	\end{equation}
	Using \eqref{eqn:transition-kernel-environment-chain}, we get
	\begin{equation}
		\label{eqn:L2-inner-product-expression}
		\int_{\Omega_\fK} g\,\fR f\,\d\Q = \tfrac{1}{1+\bar{\E}[M_0/N_0]}[I_1(f,g)+I_2(f,g)],
	\end{equation} where
	\begin{equation}
		\label{eqn:reversible-I_1-and-I_2}
		\begin{aligned}
			I_1(f,g) &= q_s\int_{\cE_\fK}g(\fe,1)f(\fe,0)\,\d\bP(\fe)+(1-q_s)\int_{\cE_\fK}\Big[\sum_{j\in\Z^d}\hat{p}(j)g(\fe,1)f(T_j\fe,1)\Big]\,\d\bP(\fe),\\
			I_2(f,g) &= q_s\int_{\cE_\fK}g(\fe,0)[f(\fe,1)-f(\fe,0)]\,\d\bP(\fe)+\int_{\cE_\fK}\tfrac{q_s}{\omega(0)}\,g(\fe,0)f(\fe,0)\,\d\bP(\fe).
		\end{aligned}
	\end{equation}
	Note that, by condition (2) in Assumption~\ref{assumpt:clustering-env}, we have $\hat{p}(k)=\hat{p}(-k)$ for all $k\in\Z^d$, and so by translation invariance of $\bP$ the second term in $I_1(f,g)$ remains unchanged if we interchange $f$ and $g$. Indeed,
	\begin{equation}
		\begin{aligned}
			\int_{\cE_\fK}\Big[\sum_{j\in\Z^d}&\hat{p}(j)g(\fe,1)f(T_j\fe,1)\Big]\,\d\bP(\fe) = \int_{\cE_\fK}\Big[\sum_{j\in\Z^d}\hat{p}(j)g(T_{-j}\fe,1)f(\fe,1)\Big]\,\d\bP(\fe)\\
			&=\int_{\cE_\fK}\Big[\sum_{j\in\Z^d}\hat{p}(-j)g(T_j\fe,1)f(\fe,1)\Big]\,\d\bP(\fe)\\
			&= \int_{\cE_\fK}\Big[\sum_{j\in\Z^d}\hat{p}(j)g(T_j\fe,1)f(\fe,1)\Big]\,\d\bP(\fe),\quad(\text{by symmetry of }\hat{p}(\cdot)).
		\end{aligned}
	\end{equation}
	Thus, using \eqref{eqn:reversible-I_1-and-I_2} and the above, we see that $I_1(f,g)+I_2(f,g)=I_1(g,f)+I_2(g,f)$, which combined with \eqref{eqn:L2-inner-product-expression} proves the claim in \eqref{eqn:reversibility-criterion}.
\end{proof}
\subsection[An application: strong law of large numbers]{An application: strong law of large numbers}
\label{sec-law-of-large-number}
As pointed out earlier in Remark~\ref{rem:validity-in-all-dimension}, part (1) of Theorem~\ref{thm:invariant-distribution-of-W} holds in any dimension $d\geq 1$, even when the migration kernel $\hat{p}(\cdot)$ (see \eqref{eqn:transformation-of-parameters}) is not symmetric. An interesting application of this theorem is the strong law of large numbers stated below.
\begin{theorem}{\bf[Strong law of large numbers]}
	\label{thm:law-of-large-numbers}
	Let $\widehat{\Theta}^\fe=(X_n^\fe,\alpha_n^\fe)_{n\in\N_0}$ be the subordinate Markov chain evolving in environment $\fe$ with law $\widehat{P}_{(0,\alpha)}^\fe$ (see Definition~{\rm \ref{defn:discrete-coordinate-process}}),  and let $\bP$ be the translation-invariant, ergodic field as in Assumption~{\rm\ref{assump:environment-law}}. Assume that the migration kernel $\hat{p}(\cdot)$ (see \eqref{eqn:transformation-of-parameters}) has finite range and mean 
	\begin{equation}
		v:=\sum_{j\in\Z^d} j\,\hat{p}(j).
	\end{equation}
	Then, for $\bP$-almost every realization of $\fe$ and $\alpha\in\{0,1\}$,
	\begin{equation}
		\lim\limits_{n\to\infty}\frac{X_n^\fe}{n}=\frac{1-q_s}{1+\rho}\, v\quad\widehat{P}_{(0,\alpha)}^\fe \text{ a.s.},
	\end{equation}
	where 
	$\rho:=\bar{\E}\big[\tfrac{M_0}{N_0}\big]$ and $q_s$ is as in \eqref{eqn:transformation-of-parameters}.
\end{theorem}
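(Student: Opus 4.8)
The plan is to write $X_n^\fe$ as a sum of one-step displacements, isolate a \emph{predictable} part governed by the environment process $W$ and a \emph{martingale} part that is negligible in the Cesàro sense, and then invoke part (1) of Theorem~\ref{thm:invariant-distribution-of-W} (stationarity and ergodicity of $W$ under $\Q$) together with Lemma~\ref{lemma:equivalence-of-pi-P} to pass from the $\Q$-almost-sure statement to a quenched one.

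Concretely, I would set $\xi_k := X_{k+1}^\fe - X_k^\fe$, $\mathcal{F}_k := \sigma(\widehat{\Theta}^\fe_0,\dots,\widehat{\Theta}^\fe_k)$, so that $X_n^\fe=\sum_{k=0}^{n-1}\xi_k$. From the transition probabilities \eqref{eqn:trans-prob-of-aux-proc} one reads off directly that, conditionally on $\mathcal{F}_k$, the displacement $\xi_k$ is $0$ when $\alpha_k^\fe=0$, and when $\alpha_k^\fe=1$ it is a $\hat p$-distributed step with probability $1-q_s$ and $0$ with probability $q_s$; hence
\begin{equation}
\widehat{E}_{(0,\alpha)}^\fe[\xi_k\mid\mathcal{F}_k] = \alpha_k^\fe\,(1-q_s)\,v ,
\end{equation}
a bounded function of $W_k=(\fe_k,\alpha_k)$, in fact of $\alpha_k$ alone. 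This gives the decomposition
\begin{equation}
\frac{X_n^\fe}{n} = (1-q_s)\,v\cdot\frac1n\sum_{k=0}^{n-1}\alpha_k^\fe \;+\; \frac1n\sum_{k=0}^{n-1}\big(\xi_k-\widehat{E}_{(0,\alpha)}^\fe[\xi_k\mid\mathcal{F}_k]\big).
\end{equation}

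For the first term I would apply Birkhoff's pointwise ergodic theorem to the bounded function $(\fe,\alpha)\mapsto\alpha$ on $\Omega_\fK$: since $W$ is stationary and ergodic under $\Q$ by Theorem~\ref{thm:invariant-distribution-of-W}, disintegrating the shift-invariant ergodic path measure $\int\widehat{P}_{(0,\alpha)}^\fe(\cdot)\,\d\Q(\fe,\alpha)$ shows that for $\Q$-a.e.\ $(\fe,\alpha)$,
\begin{equation}
\lim_{n\to\infty}\frac1n\sum_{k=0}^{n-1}\alpha_k^\fe \;=\; \int_{\Omega_\fK}\alpha\,\d\Q(\fe,\alpha) \;=\; \frac{1}{1+\rho}\int_{\cE_\fK}u(\fe,1)\,\d\bP(\fe)\;=\;\frac{1}{1+\rho}
\end{equation}
$\widehat{P}_{(0,\alpha)}^\fe$-almost surely, where I used the explicit density $u$ from Theorem~\ref{thm:invariant-distribution-of-W}. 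By Lemma~\ref{lemma:equivalence-of-pi-P} the exceptional $\Q$-null set of starting states is contained in $(\cE_\fK\setminus A')\times\{0,1\}$ for some $A'$ with $\bP(A')=1$, so this convergence holds for $\bP$-a.e.\ $\fe$ and \emph{both} $\alpha\in\{0,1\}$; the first term therefore tends to $\tfrac{1-q_s}{1+\rho}v$. For the second term, the summands are a martingale-difference sequence w.r.t.\ $(\mathcal{F}_k)$, and because $\hat p$ has finite range they are uniformly bounded; hence $\sum_{k\geq1}k^{-2}\widehat{E}_{(0,\alpha)}^\fe[(\xi_k-\widehat{E}[\xi_k\mid\mathcal{F}_k])^2\mid\mathcal{F}_k]<\infty$, and the martingale strong law (Kronecker's lemma applied to the a.s.\ convergent martingale $\sum_k k^{-1}(\xi_k-\widehat{E}[\xi_k\mid\mathcal{F}_k])$) gives $\tfrac1n\sum_{k=0}^{n-1}(\xi_k-\widehat{E}[\xi_k\mid\mathcal{F}_k])\to0$ $\widehat{P}_{(0,\alpha)}^\fe$-a.s.\ for \emph{every} $\fe$ and $\alpha$. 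Adding the two limits finishes the proof.

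I expect the only genuinely delicate point to be the passage from the $\Q$-almost-sure conclusion of Birkhoff's theorem (which is ``annealed over the initial state'') to the \emph{quenched} statement for $\bP$-a.e.\ fixed $\fe$ and each layer $\alpha\in\{0,1\}$; this is precisely what Lemma~\ref{lemma:equivalence-of-pi-P}, built on the explicit form of $u$, is designed to bridge. The martingale estimate, the uniform boundedness of increments, and the computation of $\int\alpha\,\d\Q$ are all routine.
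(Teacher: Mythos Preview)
Your proof is correct and follows essentially the same route as the paper: the same decomposition $X_n^\fe=(1-q_s)v\sum_{k<n}\alpha_k^\fe+M_n^\fe$ into predictable and martingale parts, the same application of Birkhoff's theorem to the environment process under $\Q$, and the same use of Lemma~\ref{lemma:equivalence-of-pi-P} to upgrade the $\Q$-a.s.\ conclusion to a quenched one. The only cosmetic difference is the tool used for the martingale term---the paper invokes the Azuma inequality together with Borel--Cantelli, whereas you use the martingale strong law via Kronecker's lemma; both work because the finite-range assumption makes the increments bounded.
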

Recall that $X_n^\fe$ denotes the location in $\Z^d$ at time $n$ of a particle that evolves according to the subordinate Markov chain $\widehat{\Theta}^\fe$ in environment $\fe$. Therefore, the intuitive meaning of the above result is that the particle on average spends a $\tfrac{1}{1+\rho}$ fraction of its time in the active state, and since it migrates only while being active with probability $1-q_s$, the overall velocity is scaled by the factor $\tfrac{1-q_s}{1+\rho}$.
\begin{remark}{\bf [Transference of law of large numbers]}
	Using Theorem~\ref{thm:law-of-large-numbers}, Lemma~\ref{lemma:poisson-clock-representation} and the elementary renewal theorem, we can \emph{transfer} the law of large numbers on $\widehat{\Theta}^\fe$ to the continuous-time process $\Theta^\fe=(x_t^\fe,\alpha_t^\fe)_{t\geq 0}$ (see Definition~\ref{defn:single-particle-dual}) and obtain, for $\bP$-almost every realization of $\fe$ and $\alpha\in\{0,1\}$,
	\begin{equation}
		\lim\limits_{t\to\infty}\tfrac{x_t^\fe}{t}=\tfrac{1}{1+\rho} \sum_{j\in\Z^d} j\,a(0,j),\quad P_{(0,\alpha)}^\fe \text{-a.s.}
	\end{equation}
\end{remark} 
We conclude this section with the proof of the above theorem. The proof is based on an application of the classical Birkhoff pointwise ergodic theorem combined with the Azuma inequality for martingales having bounded increments.
\begin{proof}[Proof of Theorem~{\rm\ref{thm:law-of-large-numbers}}]
	Following the standard route as taken in \cite[Lecture 1]{Sznitman02}, we start by defining a ($d$-dimensional) Martingale $M^\fe:=(M_n^\fe)_{n\in\N}$ constructed from the ``local drift'' of a particle moving in an environment $\fe\in\cE_\fK$ according to the subordinate Markov chain $\widehat{\Theta}^\fe=(X_n^\fe,\alpha_n^\fe)_{n\in\N_0}$ with law $\widehat{P}^\fe_{(0,\alpha)}$. With this aim, let us fix $\fe\in\cE_\fK$, $\alpha\in\{0,1\}$ and set $M_0^\fe:=X_0^\fe$. For $n\in\N$, define
	\begin{equation}
		\label{eqn:defn-martingale-LLN}
		M_n^\fe: = X_n^\fe - (1-q_s)v\sum_{l=0}^{n-1}\alpha_n^\fe.
	\end{equation}
We show that $M^\fe$ is a martingale (viewed component-wise) under the law $\widehat{P}^\fe_{(0,\alpha)}$ w.r.t.\ the natural filtration $(\mathcal{F}_n)_{n\in\N_0}$ of the subordinate Markov chain $\widehat{\Theta}^\fe$. {\corrected Indeed, if $\widehat{\mathcal{J}}_\fe$ denotes the discrete Markov generator of $\widehat{\Theta}^\fe$ and $h\,:G\to\Z^d$ is the projection onto the first coordinate, (i.e., the map $(i,\alpha)\mapsto i$), then the action of $\widehat{\mathcal{J}}_\fe$ on $h$ is given by
	\begin{equation}
		\label{eqn:generator-action-on-projection}
		\begin{aligned}
			(\widehat{\mathcal{J}}_\fe h)(i,\alpha)&=
			\widehat{E}_{(i,\alpha)}^\fe\Big[h(X_1^\fe,\alpha_1^\fe) - h(X_0^\fe,\alpha_0^\fe)\Big]=\widehat{E}_{(i,\alpha)}^\fe\big[X_1^\fe - X_0^\fe\big]\\
			&= \alpha\Big[q_s i+(1-q_s)\sum_{j\in\Z^d}\hat{p}(j)(i+j)-i\Big]\\
			&=\alpha(1-q_s)v,
		\end{aligned}
	\end{equation}
	where we used \eqref{eqn:trans-prob-of-aux-proc}--\eqref{eqn:transformation-of-parameters} for the computation. Therefore, combining \eqref{eqn:defn-martingale-LLN}--\eqref{eqn:generator-action-on-projection}, we get that
	\begin{equation}
		M_n^\fe = h(X_n^\fe,\alpha_n^\fe) - \sum_{l=0}^{n-1}(\widehat{\mathcal{J}}_\fe h)(X_l^\fe,\alpha_l^\fe),\quad n\in\N,
	\end{equation}
	which is nothing else but the so-called Dynkin's martingale for the Markov chain $\widehat{\Theta}^\fe$ (see e.g., \cite[Proposition~6.1.1]{Lawler2009}).} Also note that it has bounded increments by virtue of the finite-range assumption on the migration kernel $\hat{p}(\cdot)$. A standard application of Azuma inequality and the Borel Cantelli lemma yield (see e.g., \cite[Lecture 1, page 14]{Sznitman02})
\begin{equation}
	\lim\limits_{n\to\infty}\frac{M_n^\fe}{n} = 0\quad\widehat{P}^\fe_{(0,\alpha)}\text{-a.s.}
\end{equation}
Observe from above and \eqref{eqn:defn-martingale-LLN}, the proof will be complete if we prove the following: for $\bP$-almost every $\fe\in\cE_\fK$ and any $\alpha\in\{0,1\}$,
\begin{equation}
	\label{eqn:constant-activity-time}
\lim\limits_{n\to\infty}\frac{1}{n}\sum_{l=0}^{n-1}\alpha_l^\fe = \frac{1}{1+\bar{\E}[M_0/N_0]},\quad\widehat{P}^\fe_{(0,\alpha)}\text{ a.s.}
\end{equation}
This is a consequence of Birkhoff's pointwise ergodic theorem. Indeed, let $\tilde{P}_\Q$ be the canonical law defined on the path space $\Omega_\fK^{\N_0}$ of the auxiliary environment process $W=(\fe_n,\alpha_n)_{n\in\N_0}$ (recall Definition~\ref{defn:environment-chain}) with initial distribution $\Q$ (see \eqref{eqn:defn-of-invariant-dist}). In other words,
\begin{equation}
	\label{eqn:defn-of-canonical-law}
	\tilde{P}_\Q(W\in \,\cdot\,) := \int_{\Omega_\fK}\widehat{P}^\fe_{(0,\alpha)}(W\in\,\cdot\,)\,\d\Q(\fe,\alpha).
\end{equation}
Let $S\,:\Omega_\fK^{\N_0}\to \Omega_\fK^{\N_0}$ be the natural left-shift operator and $f\,:\Omega_\fK^{\N_0}\to\{0,1\}$ be the function
\begin{equation}
	(\mathfrak{a}_n,\beta_n)_{n\in\N_0}\mapsto \beta_0,\quad (\mathfrak{a}_n,\beta_n)_{n\in\N_0}\in\Omega_\fK^{\N_0}.
\end{equation}
Since, by part (1) of Theorem~\ref{thm:invariant-distribution-of-W}, $\Q$ is a stationary and ergodic distribution of $W$, we see that $S$ is a measure-preserving ergodic transformation of the dynamical system $(\Omega_\fK^{\N_0},\tilde{P}_\Q)$. Applying Birkhoff's pointwise ergodic theorem to the bounded function $f$, we obtain
\begin{equation}
	\label{eqn:application-of-Birkhoff}
	\lim\limits_{n\to\infty}\frac{1}{n}\sum_{l=0}^{n-1}f\circ S^l = \int_{\Omega_\fK^{\N_0}} f\,\d\tilde{P}_\Q = \int_{\Omega_\fK}\widehat{E}^\fe_{(0,\alpha)}\big[f((\fe_n,\alpha_n)_{n\in\N_0})\big]\,\d\Q(\fe,\alpha),
\end{equation}
where the first equality holds $\tilde{P}_\Q$-almost everywhere and the second equality follows from \eqref{eqn:defn-of-canonical-law}. We compute the left and the right side of \eqref{eqn:application-of-Birkhoff} using the definition of $f$ and  \eqref{eqn:defn-of-invariant-dist}, to obtain
\begin{equation}
	\begin{aligned}
		\lim\limits_{n\to\infty}\frac{1}{n}\sum_{l=0}^{n-1}\beta_l = \int_{\Omega_\fK}\alpha\,\d\Q(\fe,\alpha)=\frac{1}{1+\bar{\E}[M_0/N_0]}
	\end{aligned}
\end{equation}
for $\tilde{P}_\Q$-almost every $(\mathfrak{b}_l,\beta_l)_{l\in\N_0}$. However, \eqref{eqn:defn-of-canonical-law} combined with the above implies that \eqref{eqn:constant-activity-time} holds for all $(\fe,\alpha)\in A$ for some $A\in\Sigma$ such that $\Q(A)=1$. The result now follows from the equivalence of $\Q$ and $\bP$ stated in Lemma~\ref{lemma:equivalence-of-pi-P}.
\end{proof}
\section{Fundamental theorem of Markov chains}
\label{sec-fundamental-theorem-of-MC}
In this section we provide the proof of Proposition~\ref{prop:fundamental-theorem-of-MC}. Let us recall the statement of the proposition for convenience of the reader.
\begin{proposition}
	Let $(\Omega,\Sigma,\Q)$ be a probability space, where the $\sigma$-field $\Sigma$ is countably generated. Let $W := (W_n)_{n\in\N_0}$ be a Markov chain on the state space $\Omega$, and assume that $\Q$ is a reversible and ergodic stationary distribution for $W$. If $-1$ is not an eigenvalue of the Markov kernel operator $\fR\,:L_\infty(\Omega,\Q)\to L_\infty(\Omega,\Q)$ associated to $W$, then for every bounded measurable function $f \in \mathcal{F}_b(\Omega)$ and $\Q$-almost every $w\in\Omega,$
	\begin{equation}
		\label{eqn:appen-convergence-of-expectation}
		\lim\limits_{n\to\infty}\E_w[f(W_n)] = \int_{\Omega}f\,\d\Q,
	\end{equation}
	where the expectation on the left is taken w.r.t.\ the law of $W$ started at $w$.
\end{proposition}
\begin{proof}
	If $Q(\cdot\,,\cdot)$ denotes the transition kernel of $W$, the action of the Markov operator $\fR$ on $f\in L_\infty(\Omega,\Q)$ is well-defined and is given by
	\begin{equation}
	\fR f(w) := \int_\Omega f(y)\,Q(w,\d y).
	\end{equation}
In fact, since $\Q$ is invariant for $W$, the same definition extends $\fR$ in a canonical way to a positive contraction operator on $L_p(\Omega,\Q)$ for any $p\geq 1$. Furthermore, by reversibility of $\Q$, the operator $\fR$ becomes self-adjoint on $L_2(\Omega,\Q) $ as well. Let $f\in\mathcal{F}_b(\Omega)$ be fixed. Because $f$ is bounded, $\fR^nf\in \mathcal{F}_b(\Omega)$, and by the Markov property of $W$ it follows that
\begin{equation}
	\label{eqn:expectaion-in-markov-operator}
	\E_w[f(W_n)] = \fR^nf(w),\quad w\in\Omega,\, n\in \N.
\end{equation}
Because $\fR$ is self-adjoint, we see that $\fR^2$ is a nonnegative-definite operator on the Hilbert space $L_2(\Omega,\Q)$ equipped with the natural $L_2$ inner product, and this allows us to conclude from \cite[Corollary 3]{Stein61} (see also \cite[Theorem 1]{Rota62}) that there exist $\psi,\widehat{\psi}\in L_2(\Omega,\Q)$ satisfying
\begin{equation}
	\label{eqn:convergence-of-odd-even-subseq}
		\psi = \lim\limits_{n\to\infty}\fR^{2n}f,\quad\widehat{\psi} =\lim\limits_{n\to\infty}\fR^{2n+1}f,
\end{equation}
where the convergence is in $L_2$-norm and $\Q$-almost everywhere. It is worth mentioning that the convergence in \eqref{eqn:convergence-of-odd-even-subseq}, which follows from \cite[Corollary 3]{Stein61}, essentially uses the classical Banach principle (see e.g., \cite{Bello66}) along with a maximal ergodic inequality. The convergence, in fact, holds for any function in $(L\log^+L)(\Omega,\Q)$. By the almost sure convergence of $ \fR^{2n}f$ (resp.\ $\fR^{2n+1}f$) and the $L_\infty$ contractivity of $\fR$, we see that $\psi,\widehat{\psi}\in L_\infty(\Omega,\Q)$ as well. The $L_2$ contractivity of the linear operator $\fR$ also implies that, 
\begin{equation}
\psi = \fR\widehat{\psi},\quad \widehat{\psi} = \fR\psi\quad \Q\text{-a.s.,}
\end{equation}
from which we get
\begin{equation}
	\label{eqn:ergodic-square-identity}
	\fR^2\psi = \psi,\ \fR^2\widehat{\psi}=\widehat{\psi},\quad \Q \text{ a.s.}
\end{equation}
We claim that if $-1$ is not an eigenvalue of $\fR$ as an operator on $L_\infty(\Omega,\Q)$, then we must have
\begin{equation}
	\label{eqn:psi-equal-psi-hat}
	\psi = \widehat{\psi} = \int_\Omega f\,\d\Q,\quad \Q \text{ a.s.}
\end{equation}
Note that \eqref{eqn:appen-convergence-of-expectation} will follow once we prove \eqref{eqn:psi-equal-psi-hat}. Indeed, \eqref{eqn:psi-equal-psi-hat} combined with \eqref{eqn:expectaion-in-markov-operator}--\eqref{eqn:convergence-of-odd-even-subseq} implies that, for $\Q$-almost every $w\in\Omega$, both the odd and even subsequence of $(\E_w[f(W_n)])_{n\in\N}$ converge to the same limit $\Q(f):=\int_{\Omega}f\,\d\Q,$ which necessarily forces the convergence of $\E_w[f(W_n)]$ to $\Q(f)$ as $n\to\infty$.

To prove \eqref{eqn:psi-equal-psi-hat}, it suffices to show that $\psi $ and $\widehat{\psi}$ are constant $\Q$ a.s., because by the invariance of $\fR$ w.r.t.\ $\Q$ and bounded convergence we have \begin{equation}
	\label{eqn:constant-integral}
	\int_\Omega \psi \,\d\Q = \lim\limits_{n\to\infty}\int_\Omega\fR^{2n}f\,\d\Q = \int_\Omega f\,\d\Q = \lim\limits_{n\to\infty}\int_{\Omega}\fR^{2n+1}f\,\d\Q = \int_\Omega\widehat{\psi}\,\d\Q.
\end{equation}
We only prove the claim for $\psi$, as the same argument works for $\widehat{\psi}$. Let us set $g := \fR\psi - \psi$. From \eqref{eqn:ergodic-square-identity}, we have
\begin{equation}
	\label{eqn:square-eigenfunction}
	\fR g = - g,\quad \Q \text{ a.s.},
\end{equation}
and also $\|g\|_\infty \leq 2\|\psi\|_\infty < \infty$. Thus, $g\in L_\infty(\Omega,\Q)$ is such that $\Q$-a.s.\ $\fR g = -g$, and hence by our assumption we must have $g=0$ a.s. In other words, $\Q$-a.s.\ $\fR\psi-\psi = 0$ and therefore ergodicity of $\fR$ in $L_2(\Omega,\Q)$, which is equivalent to the ergodicity of $W$ under $\Q$, implies that $\psi$ is necessarily a constant $\Q$ a.s.
\end{proof}

\providecommand{\MR}[1]{%
	\href{https://mathscinet.ams.org/mathscinet-getitem?mr=#1}{MR#1}}

\end{document}